\theoremstyle{definition}
\newtheorem{Def}[subsection]{Definition}
\newtheorem{rem}[subsection]{Remark}
\theoremstyle{plain}
\newtheorem{prop}[subsection]{Proposition}
\newtheorem{thm}[subsection]{Theorem}
\newtheorem{lem}[subsection]{Lemma}
\newtheorem{cor}[subsection]{Corollary}
\newtheorem{conj}[subsection]{Conjecture}
\newcommand{\mbf}{\mathbf}
\newcommand{\mbb}{\mathbb}
\newcommand{\mrm}{\mathrm}
\newcommand{\A}{\mathcal A}
\newcommand{\B}{\mbf B}
\newcommand{\C}{\mathscr C}
\newcommand{\D}{\mathcal D}
\newcommand{\F}{\mathcal F}
\newcommand{\G}{\mbf G}
\newcommand{\IC}{\mrm{IC}}
\newcommand{\M}{\mathscr M}
\newcommand{\Q}{\bar{\mbb Q}}
\newcommand{\T}{\mathcal T}
\newcommand{\U}{\mbf U}
\newcommand{\X}{\mbf X}
\newcommand{\Y}{\mbf Y}
\newcommand{\tF}{\widetilde{\mathcal F}}
\newcommand{\II}{\mathscr I}
\newcommand{\EE}{\mathscr E}
\newcommand{\FF}{\mathscr F}
\newcommand{\DD}{\mathscr D}
\newcommand{\KK}{\mathscr K}
\newcommand{\LL}{\mathscr L}
\newcommand{\BB}{\mathscr B}
\newcommand{\Supp}{\mbox{supp}}
\newcommand{\Hom}{\mrm{Hom}}
\title[Geometric  $\overset{.}{\U}$] {A geometric realization of  modified quantum  algebras}
\author{  Yiqiang Li}
\address{Department of Mathematics\\  University at Buffalo\\ State  University of New York\\ 244 Mathematics Building\\ Buffalo, New York 14260}
\email{yiqiang@buffalo.edu}
\date{\today}
\keywords{Modified quantum algebra,  canonical basis, equivariant derived category, equivariant perverse sheaf, framed representation variety} 
\subjclass{17B37; 14L30; 14F05; 14F43}
\begin{document}

\begin{abstract}
A   geometric construction of Lusztig's modified   quantum  algebra of symmetric type    is presented
 by using  certain localized equivariant derived categories of double framed representation varieties of quivers.
\end{abstract}

\maketitle

\section{Introduction}

\label{Introduction}

Let $\dot{\U}$ be Lusztig's   modified quantum  algebra  of symmetric type and $\dot{\B}$ its  canonical basis. 
This paper is an attempt to geometrize the pair $(\dot{\U}, \dot{\B})$ by using the geometry  of $double$ framed representation varieties $\mbf E_{\Omega}$ of quivers.
There are several  work in literature strongly suggesting  the existence of such a construction. 
Meanwhile, there are    obstacles in getting this  construction,  largely because $\mbf E_{\Omega}$ is not really in complete analog with  partial flag varieties used in  
Beilinson-Lusztig-MacPherson's construction of the pair $(\dot{\U},\dot{\B})$ of type $\mbf A$ in ~\cite{BLM90}. 
As a matter of fact,  partial flag varieties are geometric-invariant-theory (GIT)  quotients of certain open subvarieties of $\mbf E_{\Omega}$ of type $\mbf A$. 
Although the GIT quotients still make sense in other types,  they do not produce a construction of the pair $(\dot{\U},\dot{\B})$. We refer to ~\cite{Reineke08} and ~\cite{Li10} for the progress in this direction.
In another direction,  Zheng constructed in ~\cite{Zheng08} 
a set of  endomorphism functors between certain localized equivariant derived categories of framed representation varieties  and  showed  that it satisfies
the defining relations  of $\dot{\U}$.
It is natural to see if similar   localization  in  $\mbf E_{\Omega}$ helps us in obtaining the pair $(\dot{\U},\dot{\B})$.  
Indeed, it does as we obtain  the following results in this paper.

\begin{thm}
\label{all}
\begin{enumerate}
\item Similar localization $\M_d$ of equivariant derived categories of $\mbf E_{\Omega}$ admits 
 an associative  convolution product.

\item The convolution product is independent of the choices of orientations of quivers.

\item A set of bounded complexes in $\M_d$  satisfies the defining relations of the integral form of $\dot{\U}$. 

\item  There is a surjective algebra homomorphism from the integral form of  $\dot{\U}$ to the Grothedieck group of $\M_d$.  

\item Our construction is compatible with that of ~\cite{BLM90} and ~\cite{Zheng08}.

\item Up to an assumption, the complexes involved in this construction are semisimple. 

\item The canonical basis elements get sent to the simple perverse sheaves or zero, up to shifts,  
         arising in this construction in  the special  case  corresponding to the one studied in ~\cite{BLM90}. 

\item In the setting of item (7), the simple perverse sheaves are exactly those microlocal perverse sheaves (\cite{KS90}, \cite{GMV05}, \cite{W04}) 
         defined by using Nakajima's quiver varieties (\cite{Nakajima98}-\cite{Nakajima00}).
         
\end{enumerate}
 \end{thm}

The convolution product on the category $\M_d$ is obtained by using  left adjoints of the localization functors, whose existence is shown in this paper,  and 
 the general direct image functors with compact support   and general inverse image functors  defined by Bernstein and Lunts (\cite{BL94}, ~\cite{LO08b}). 
The assumption  mentioned in Theorem ~\ref{all} (7) assumes   that the complexes corresponding to the idempotent generators in $\dot{\U}$ are semisimple. 
It is proved in full generality in a closely related setting in ~\cite{W12}.
Based on Theorem ~\ref{all} (7) and (8), we conjecture that the assumption holds in our setting in full generality also and that the simple perverse sheaves arising from this construction corresponds to 
elements in $\dot{\B}$.

It will be very interesting to relate the construction  
to the Harish-Chandra-bimodule construction in ~\cite[Theorem47]{MS08} via localization,
 that in affine type $\mbf A$ cases
in  ~\cite{GV93} and ~\cite{Lusztig99}, 
the one in type $\mbf D$ cases  in ~\cite{Li10},
to the geometric realization of quantum affine algebras in  \cite{Nakajima00} (in view of ~\cite{T87} and  ~\cite{G94}), 
and to the categorification of $\dot{\U}$ in  ~\cite{KL08a}-\cite{KL08c},  ~\cite{R08} and ~\cite{MSV10}.

{\bf Acknowledgement.} 
We thank  Professor Z. Lin,  H. Nakajima, M. Shimozono,  W. Wang and B. Webster for interesting discussions. 
We thank Professor C. Stroppel for pointing out the connection with ~\cite{MS08}. 
A large part of the work was done while the author was  visiting Virginia Tech. 
This work is partially supported by NSF grant: DMS-1160351.

\setcounter{tocdepth}{1}
\tableofcontents

\section{Preliminary, I}

We shall recall the definition of  a  symmetric quantum modified algebra  from ~\cite{Lusztig93}.

\subsection{Symmetric Cartan datum,  root datum and graph}
\label{Cartan}

Let $I$ be a finite set, and 
\[
\cdot: \mbb Z[I]\times \mbb Z[I] \to \mbb Z
\]
be a symmetric bilinear form.  
The pair $(I, \cdot)$ is called a $symmetric$ $Cartan$ $datum$ if 
\[
i\cdot i =2  \quad \mrm{and} \quad 
i\cdot j \in \{ 0, -1, -2, \cdots \},  \quad \forall i \neq j  \in I. 
\]
We call a triple $(\X, \Y, (, ))$ a $root$ $datum$ of $(I, \cdot)$ if the following properties are satisfied:

\begin{itemize}
\item   $\X$ and $\Y$ are finitely generated free abelian groups and $(, ): \Y\times \X \to \mbb Z$ is a perfect pairing;
\item   there are two embeddings $I\hookrightarrow  \X$, $i\mapsto \alpha_i$ and $I\hookrightarrow \Y$, $i\mapsto \check{\alpha}_i$ 
            such that $(\check{\alpha}_i,\alpha_ j)=i\cdot j$ for any  $i, j\in I $.
\end{itemize}
 
 Let $\X^+=\{ \lambda \in \X| (\check{\alpha}_i,  \lambda)\in \mbb  N, \forall i\in I\} $ be the set of all dominant integral weights.
 
To a symmetric Cartan datum,  associated  a $graph$ $\Gamma$ consisting of the following data:
\[
\Gamma = (I, H; \; ', ''  : H\to I; \, \bar \, : H\to H) 
\]
where $I$ is the vertex set and $H$ is the edge set, the maps $'$ and $''$ are the source and target maps, respectively,
and the map $\;\bar{\empty}\; $ is the fixed-point-free involution such that 
\begin{eqnarray*}
h'\neq h'',  h' =(\bar h)'' \;\mbox{and} \; \#\{ h\in H | h'=i, h''=j\}= - i\cdot j, \quad \forall h\in H, i \neq j\in I.
\end{eqnarray*}

\subsection{Modified quantum   algebra}
\label{modified-quantum}
Let $\mbb Q(v)$ be the rational field with the indeterminate $v$.  Let 
\begin{equation*}
\begin{split}
 [[s]]=\frac{v^s-v^{-s}}{v-v^{-1}},\quad 
[[s]]^!=[[s]][[s-1]]\cdots [[1]],
\quad \mbox{for any} \; s\in \mbb N.
\end{split}
\end{equation*}

The  $modi\!f\!ied$ $quantum$  $algebra$ $\dot{\U}$   of Lusztig attached to a root datum $(\X, \Y, (,))$ of a symmetric Cartan datum $(I, \cdot)$ 
is  a $\mbb Q(v)$-algebra without unit determined by the following generators and relations.
The generators are 
\[
1_{\lambda}, \; E_{\lambda+\alpha_i, \lambda}\;\mbox{and}\; F_{\lambda-\alpha_i,\lambda}, \quad \forall i \in I, \lambda \in \X.
\]
The relations are
\begin{align*}
\tag{$\dot{\U}$a} & 1_{\lambda} 1_{\lambda'}=\delta_{\lambda,\lambda'} 1_{\lambda}, \quad \forall \lambda,\lambda'\in \X;\\
\tag{$\dot{\U}$b} & E_{\lambda+\alpha_i,\lambda} 1_{\lambda'}=\delta_{\lambda,\lambda'} E_{\lambda+\alpha_i,\lambda},
\quad
1_{\lambda'} E_{\lambda+\alpha_i,\lambda} =\delta_{\lambda',\lambda+\alpha_i} E_{\lambda+\alpha_i,\lambda}, \quad \forall i\in I, \lambda,\lambda'\in \X;\\
\tag{$\dot{\U}$c}  & F_{\lambda-\alpha_i,\lambda} 1_{\lambda'}=\delta_{\lambda,\lambda'} F_{\lambda-\alpha_i,\lambda}, 
\quad
1_{\lambda'} F_{\lambda-\alpha_i,\lambda} = \delta_{\lambda',\lambda-\alpha_i} F_{\lambda-\alpha_i,\lambda}, \quad \forall i\in I, \lambda, \lambda' \in \X;\\
\tag{$\dot{\U}$d}   &E_{\lambda-\alpha_j+\alpha_i,\lambda-\alpha_j} F_{\lambda-\alpha_j,\lambda}- F_{\lambda+\alpha_i-\alpha_j,\lambda+\alpha_i} E_{\lambda+\alpha_i,\lambda} 
= \delta_{st} [[(\check{\alpha}_i,\lambda)]]  1_{\lambda}, \quad \forall  i, j\in I, \lambda \in \X;\\
\tag{$\dot{\U}$e}   & 
\sum_{p=0}^m  (-1)^p     
E^{(m-p)}_{\lambda+m\alpha_i+\alpha_j,  \lambda+p\alpha_i+\alpha_j} 
E_{\lambda+p\alpha_i+\alpha_j, \lambda+p\alpha_i} 
E^{(p)}_{\lambda+p\alpha_i, \lambda} =0, \quad  \forall i\neq j \in I, \lambda \in \X;\\
\tag{$\dot{\U}$f} \quad &
\sum_{p=0}^m  (-1)^p     
F^{(m-p)}_{\lambda-m\alpha_i-\alpha_j,  \lambda-p\alpha_i-\alpha_j} 
F_{\lambda-p\alpha_i-\alpha_j, \lambda-p\alpha_i} 
F^{(p)}_{\lambda-p\alpha_i, \lambda} =0, \quad  \forall i\neq j \in I, \lambda \in \X;
\end{align*}
where we set $m=1-i \cdot j$ in the q-Serre relations ($\dot{\U}$e) and ($\dot{\U}$f) and 
\begin{equation*}
\begin{split}
E^{(n)}_{\lambda+n \alpha_i, \lambda} &=\frac{1}{[[n]]^!}  E_{\lambda+n\alpha_i, \lambda+(n-1) \alpha_i } \cdots E_{\lambda +2\alpha_i, \lambda+\alpha_i} E_{\lambda +\alpha_i, \lambda}, \quad\\
F^{(n)}_{\lambda-n \alpha_i, \lambda} &=\frac{1}{[[n]]^!}  F_{\lambda-n\alpha_i, \lambda-(n-1) \alpha_i } \cdots F_{\lambda -2\alpha_i, \lambda-\alpha_i} F_{\lambda -\alpha_i, \lambda},\quad \forall 
n\in \mbb N, i\in I, \;\mbox{and}\; \lambda \in \X.
\end{split}
\end{equation*}

Let $\mbb A=\mbb Z[v, v^{-1}]$
be the subring of Laurent polynomials in $\mbb Q(v)$. Let $_{\mbb A}\! \dot{\U}$ be the $\mbb A$-subalgebra of $\dot{\U}$ generated by the elements
$1_{\lambda}$, $E^{(n)}_{\lambda+n\alpha_i, \lambda}$ and $F^{(n)}_{\lambda-n\alpha_i, \lambda}$ for all $\lambda \in \X$, $n\in\mbb N$ and $i\in I$.
Let $\dot{\B}$ be the canonical basis of $\dot{\U}$ defined in ~\cite[25.2.4]{Lusztig93}.

\section{Preliminary, II}
\label{geometric}
We shall recall the equivariant derived category from ~\cite{BL94}, ~\cite{LMB00}, ~\cite{LO08a}-\cite{LO08b}. 
We will use the presentations in  ~\cite{Schn08} and ~\cite{WW09}.

\subsection{Derived category $\D^*(X)$}

Let $p$ be a prime number. Fix an algebraic closure $k$ of the finite field $\mbb F_p$ of $p$ elements.  
All algebraic varieties  in this paper will be over $k$.

Let $l$ be a prime number different from $p$.  Let $\bar{\mbb Q}_l$ be an algebraic closure of the field of $l$-adic numbers. 

We write $\D^b(X)$ for the $bounded$ $derived$ $category$ of complexes of  $\bar {\mbb Q}_l$-constructible sheaves on an algebraic variety $X$ defined in ~\cite[2.2.18]{BBD82}.
See also ~\cite[I, \S12] {FK88} and ~\cite[II 5] {KW01}.
We denote  $\D^{*}(X)$, $*=\{ \phi, +, - \}$,  for the similar  derived category with the term ``bounded'' replaced by 
``unbounded'', ``bounded below'', ``bounded above'', respectively.

The shift functor will be denoted by ``$[-]$''.  The functors $Rf_*$, $Rf_!$, $Lf^*$, and $Rf^!$ associated to a given morphism $f: Y\to X$ of varieties will be written as 
$f_*$, $f_!$, $f^*$ and $f^!$, respectively, in this paper.  We also write $K\to L\to M \to $ for any distinguished triangle $K\to L \to M\to K[1]$ in the derived categories just defined.

\subsection{Equivariant derived category $\D^*_{G}(X)$} 
\label{equivariant}

Let us recall the  equivariant derived categories following  ~\cite{BL94}, ~\cite{Schn08}, and ~\cite{WW09}.

A morphism $f: Y\to X$ of varieties over $k$ is called $n$-$acyclic$ if for any sheaf $\mathcal F$ over $X$, considered as a complex of sheaves concentrated on the zeroth degree,  then
\begin{align}
\label{acyclic}
\mathcal F \simeq \tau_{\leq n} f_* f^* (\mathcal F),
\end{align}
and the property (\ref{acyclic}) holds under base changes, where $\tau_{\leq n}: \D(X) \to \D^{\leq n}(X)$ is the truncation functor.

Let a linear algebraic group, $G$,   over $k$ act on $X$.

A $G$-equivariant morphism $E\overset{e} {\to } X$ is called a  $resolution$  $o\! f$  $X$  (with respect to $G$) if  the action of $G$ on $E$ is free and its quotient map  $E\to G\backslash E$ is a locally trivial fibration with fiber $G$.  
We thus have the following diagram
\[
\begin{CD}
X @<e<< E @>\bar e>> \bar E,
\end{CD}
\]
where $\bar E = G\backslash E$.
Let $[G\backslash X]$ be the category whose objects are all resolutions  of $X$ and 
morphisms between  resolutions of $X$ are defined to be $G$-equivariant  morphisms over $X$. 
By ~\cite[18.7.5]{LMB00}, 
$[G\backslash X]$ is an algebraic stack of Bernstein-Lunts. 

A resolution $ E\overset{e}\to X$ of $X$ is called $smooth$ if $e$ is smooth.
If, moreover, $\bar e$ is n-acyclic, we call the resolution $E\overset{e}\to X$ an $n$-$acyclic$ $smooth$ $resolution$ $o\! f$ $X$.

A  sequence, $(E_n, f_n)_{n\in \mbb N}$, of smooth resolutions of $X$:
\[
\begin{CD}
E_0  @>f_0>> E_1 @>f_1>> \cdots  @>>> E_n @>f_n>>  E_{n+1} @>>> \cdots,
\end{CD}
\]
is called $acyclic$ if  $e_n: E_n \to X$ is $n$-acyclic and $f_n$ is a  closed inclusion, $\forall n\in \mbb N$.

Given any smooth resolution $E\overset{e}{\to} X$, the $category$ $\D^+(\bar E| e)$ of $objects$ $from$ $X$ is the full subcategory of $\D^+(\bar E)$ consisting of all objects $K$ such that there is a complex
$L\in \D^+(X)$ satisfying $e^*(L) \simeq \bar e^*(K)$.

If $f: E\to E'$ is a morphism of smooth resolutions of $X$, then it induces  a map $\bar f: \bar E\to \bar E'$, which gives rise to the inverse image functor 
$\bar f^*: \D^+(\bar E'| e' ) \to \D^+(\bar E|e)$.

Given any acyclic sequence,  $(E_n, f_n)_{n\in \mbb N}$, of smooth resolutions of $X$, we then have a sequence of  functors:
\[
\begin{CD}
\D^+(\bar E_0| e_0)   @<\bar f_0^* << \D^+(\bar E_1|e_1)  @<\bar f_1^* << \cdots   \D^+ (\bar E_n|e_n)  @<\bar f_n^*<<  \D^+(\bar E_{n+1}|e_{n+1})  @<<< \cdots.
\end{CD}
\]
The $inverse$ $limit$ $category$ $\underleftarrow{\lim} \mathcal \D^+(\bar E_n|e_n)$ is defined to be the category whose objects are pairs $(A_n; \phi_n)_{n\in \mbb N}$,  with 
$A_n\in \D^+(\bar E_n|e_n) $ and $\phi_n: \bar f^*_n(A_{n+1})  \tilde{\to} A_n$ isomorphisms;  and whose morphisms 
$\alpha: (A_n; \phi_n)_{n\in \mbb N}  \to (B_n; \psi_n)_{n\in \mbb N}$ are  collections of morphisms $\alpha_n: A_n\to B_n$ such that 
$\alpha_n \phi_n = \psi_n  \bar f^*_n (\alpha_{n+1})$, $\forall n\in \mbb N$.

\begin{Def}
The $bounded$  $below$  $G$-$equivariant$ $derived$ $category$ $of$ $X$ is defined to be 
\[
\D^+_G(X) =\underleftarrow{\lim} \; \D^+(\bar E_n|e_n).
\]
\end{Def}
Similarly, we define the $bounded$ (resp. $bounded$ $above$; $unbounded$)  $G$-$equivariant$ $derived$ $category$ $\D^b_G(X)$ (resp. $\D^-_G(X)$; $\D_G(X)$),  
with $\underleftarrow{\lim} \; \D^+(\bar E_n| e_n)$ replaced by  
$\underleftarrow{\lim} \; \D^b(\bar E_n|e_n)$ (resp. $\underleftarrow{\lim} \; \D^-(\bar E_n|e_n)$; $\underleftarrow{\lim} \; \D(\bar E_n|e_n)$).

We call the objects in the categories just defined $equivariant$ $complexes$.

The following results are proved in ~\cite{Schn08} and  ~\cite{WW09}.

\begin{prop}
\label{Geometric-Grothendieck}
\begin{enumerate}
\item[(a)]  Acyclic sequences of  smooth resolutions of $X$  exist;
\item[(b)]  The definitions of equivariant derived categories are independent of the choices of the acyclic  sequences of smooth resolutions of $X$;
\item [(c)]  The category $\D^+_G(X)$ $($resp. $\D^b_G(X)$, $\D^-_G(X)$$)$ is a triangulated category. $\Sigma $ is a 
          distinguished triangle in $\D^+_G(X)$ $($resp. $\D^b_G(X)$, $\D^-_G(X))$ if and only if the projection of $\Sigma$ to $\D^+(\bar E_n|e_n)$ $($resp. $\D^b(\bar E_n|e_n)$, $\D^-(\bar E_n|e_n))$
          is a distinguished triangle for all $n\in \mbb N$;
\end{enumerate}
\end{prop}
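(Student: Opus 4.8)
The plan is to follow the Bernstein--Lunts strategy, in the form presented in \cite{Schn08} and \cite{WW09}; I treat the three parts in turn.

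\textbf{(a) Existence.} Fix a closed embedding $G\hookrightarrow GL(V)$ with $\dim V=N$. For an integer $m>N$ let $W_m\subseteq\Hom(k^m,V)$ be the open subvariety of surjective linear maps; its complement, the locus of maps of rank less than $N$, has codimension $m-N+1$ in the affine space $\Hom(k^m,V)$, so by purity $\widetilde H^i(W_m;\bar{\mbb Q}_l)=0$ for $i\leq 2(m-N)$. Post-composition makes $G$ act freely on $W_m$: if $g\circ\phi=\phi$ with $\phi$ surjective then $g=\mrm{id}_V$. Choose integers $m_n$ with $m_n<m_{n+1}$ and $2(m_n-N)\geq n$, and set $E_n=W_{m_n}\times X$ with the diagonal $G$-action and $e_n\colon E_n\to X$ the projection. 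Then $G$ acts freely on $E_n$. Since $e_n$ is a trivial fibration with fibre $W_{m_n}$, the K\"unneth formula gives $e_{n*}e_n^*\mathcal F\simeq\mathcal F\otimes R\Gamma(W_{m_n};\bar{\mbb Q}_l)$, whence $\mathcal F\simeq\tau_{\leq n}e_{n*}e_n^*\mathcal F$ by the vanishing above; the same holds after base change, again a product, so $e_n$ is $n$-acyclic. Finally a split inclusion $k^{m_n}\hookrightarrow k^{m_{n+1}}$ together with extension of surjections by zero induces a $G$-equivariant closed immersion $W_{m_n}\hookrightarrow W_{m_{n+1}}$; taking the product with $\mrm{id}_X$ gives $G$-equivariant closed immersions $f_n\colon E_n\hookrightarrow E_{n+1}$. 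Thus $(E_n,f_n)_{n\in\mbb N}$ is an acyclic sequence of smooth resolutions of $X$.

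\textbf{(b) Independence.} Given two acyclic sequences $(E_n,f_n)$ and $(E_n',f_n')$ of smooth resolutions, the fibre products $E_n'':=E_n\times_X E_n'$ with the diagonal $G$-action again form such a sequence: the action is free; $E_n''\to X$ is smooth and $n$-acyclic, being the composite of the base change of $e_n$ along $e_n'$ with $e_n'$, and smoothness as well as $n$-acyclicity are preserved under base change and composition; and the induced maps $E_n''\hookrightarrow E_{n+1}''$ are closed immersions. The two projections induce functors from $\underleftarrow{\lim}\,\D^+(\bar E_n|e_n)$ and $\underleftarrow{\lim}\,\D^+(\bar E_n'|e_n')$ into $\underleftarrow{\lim}\,\D^+(\bar E_n''|e_n'')$, and it suffices to see each is an equivalence. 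This reduces to the Bernstein--Lunts comparison lemma: if $g\colon E'\to E$ is a morphism of smooth resolutions of $X$ with $e$ and $e'$ both $n$-acyclic, then $\bar g^*$ restricts to an equivalence between the full subcategories of objects with cohomology in degrees $\leq n$. Passing to the limit, every object is determined, in each fixed range of degrees $\leq k$, by its constituents $A_n$ with $n\geq k$ along transition functors that are equivalences in that range; hence the comparison functors are fully faithful and essentially surjective. Equivalently, the inverse-limit construction is insensitive to passing to cofinal subsequences and to the diagonal refinements used here, which gives the asserted independence.

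\textbf{(c) Triangulated structure.} This is the most technical point. The additive structure and the shift are inherited levelwise, and we declare $\Sigma$ distinguished precisely when its projection to $\D^+(\bar E_n|e_n)$ is distinguished for every $n$, so the characterization in the statement becomes a tautology and only the triangulated axioms remain. I would obtain them by transporting the structure from a manifestly triangulated model: by part (b) one identifies $\D^*_G(X)$ with the constructible derived category of $\bar{\mbb Q}_l$-complexes on the simplicial scheme $(E_\bullet\times X)/G$ attached to an acyclic sequence $(E_n)$ of resolutions --- a triangulated category in the sense of \cite{BBD82} and \cite{BL94} --- under which the projection functors to the $\D^+(\bar E_n|e_n)$ become restriction to finite skeleta and therefore detect distinguished triangles. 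A direct verification inside the inverse-limit model is also possible but must confront the non-functoriality of cones: for $\alpha=(\alpha_n)$ one takes cones $C_n$ of $\alpha_n$, and the transition isomorphisms $\bar f_n^*C_{n+1}\simeq C_n$, though not canonical a priori, become unique --- hence coherent --- because $n$-acyclicity of $e_n$ forces the pertinent morphism groups to vanish in each bounded range of degrees; (TR2)--(TR4) are then checked levelwise using the same uniqueness. This coherence of cones in the inverse limit is the principal obstacle of the whole argument; parts (a) and (b) are, respectively, an explicit construction and a routine cofinality argument.
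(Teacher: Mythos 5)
The paper itself offers no argument for this proposition (it is quoted from [S08] and [WW09]), so your sketch has to stand on its own. Parts (a) and (b) do: the Stiefel-type construction $E_n=W_{m_n}\times X$, the codimension estimate giving vanishing of $\widetilde H^i(W_{m_n})$ for $i\le 2(m_n-N)$ and hence $n$-acyclicity of the trivial fibration $e_n$, and the fibre-product-plus-comparison-lemma argument for independence are exactly the Bernstein--Lunts arguments that the cited references run. Two small presentational points: you should say a word about why the quotients $\bar E_n=G\backslash(W_{m_n}\times X)$ exist as varieties (needed even to form $\D^+(\bar E_n|e_n)$), and the comparison lemma is an equivalence on subcategories of objects whose cohomological \emph{amplitude} is small relative to $n$, not of objects "with cohomology in degrees $\le n$"; neither affects the structure of the argument.

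The genuine gap is in (c), and it is precisely at the point you call the principal obstacle. Your direct argument rests on the claim that the isomorphisms $\bar f_n^*C_{n+1}\simeq C_n$ compatible with the triangle maps are \emph{unique} because $n$-acyclicity kills the pertinent Hom groups. This is false: two such isomorphisms differ by an automorphism of $C_n$ of the form $\mathrm{id}+s\circ p$ with $p:C_n\to A_n[1]$ and $s:A_n[1]\to C_n$ factoring through $B_n$, so the ambiguity is governed by groups such as $\Hom(A_n[1],B_n)$, which do not vanish for objects from $X$ (already $A=B[-1]$ gives $\Hom(A[1],B)=\Hom(B,B)\ne 0$); $n$-acyclicity only identifies Homs computed on $\bar E_n$ with Homs on $X$ within a range of length about $n$, it does not produce any vanishing of this kind. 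What the actual proofs do is not uniqueness but stabilization: in each fixed truncation range the transition functors are equivalences (your comparison lemma from (b)), so cones are constructed at a sufficiently deep level of the tower and propagated, and for $\D^+_G(X)$, whose objects are unbounded above, one needs the further argument assembling an object and its triangles from the tower of truncations with control of the relevant $\lim$-obstructions --- this is exactly the content of the cited references and is missing here. Your alternative route is not available for free either: part (b) does not identify the inverse-limit category with the derived category of the simplicial scheme or of the quotient stack; that identification is a separate nontrivial theorem (cf.\ Remark~\ref{geometric-identification}), and the assertion that restriction to finite skeleta detects distinguished triangles would itself require proof. So (a) and (b) are in order, but (c) as written does not close.
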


For  any object $(A_n, \phi_n) \in \D_G^+( X)$, there exists $B_n \in \D^+(X)$ such that $\bar e_n^* A_n \simeq e_n^* B_n$ for any $n\in \mbb N$. 
So $e_n^* B_n \simeq f_n^* \bar e_{n+1}^* (A_{n+1})\simeq e_n^* B_{n+1}$. 
Thus  the stalks of $B_n$ and $B_{n+1}$ at any given points are isomorphic
to each other.  Hence $\mbox{supp} (B_n) =\mbox{supp} (B_{n+1})$, for all $n\in \mbb N$.
Therefore, we can define the support of the object $(A_n, \phi_n)$ as follows.

\begin{Def}
The $support$ of the object $(A_n, \phi_n)\in \D_G^+(X)$ is defined to be the support of the $B_n$'s.
\end{Def}
Similarly, we can define the supports of any objects in $\D_G^b(X)$, $\D_G^-(X)$, and $\D_G(X)$.

Finally, we should give another description of the bounded derived category $\D^b_G(X)$ in order to talk about the singular supports, 
which will be defined in Section ~\ref{TypeA}, of objects  in $\D^b_G(X)$ for varieties $X$ defined over a characteristic zero algebraically closed field. 

Let $X \overset{e}{\leftarrow} E \overset{\bar e}{\to} \bar E$ be a smooth resolution of $X$ with respect to $G$.  
We define the category  $\D^b_G (X, E)$ to be the category whose objects are 
\begin{itemize}
\item triples $K=(K_X, \bar K, \theta)$,  where $K_X \in \D^b(X)$, $\bar K \in \D^b (\bar E)$ and $\theta: e^* K_X \to \bar e^* \bar K$ is an isomorphism;
\end{itemize}
and whose morphisms are
\begin{itemize}
\item pairs $\alpha=(\alpha_X, \bar \alpha): K\to L$,  with $\alpha_X:K_X\to L_X\in \D^b(X)$ and $\bar \alpha : \bar K \to \bar L \in \D^b(\bar E)$ such that $\theta e^* (\alpha_X)=e^* (\bar \alpha)  \theta$.
\end{itemize}

Given any morphism $\nu : E\to E_1$ of smooth resolutions of $X$, it induces a morphism $\bar \nu: \bar E\to \bar E_1$, and a functor 
\[
\nu^*: \D^b_G(X, E_1) \to \D^b_G(X, E),  \quad (K_X, \bar K, \theta) \to (K_X, \bar \nu^* \bar K, \xi),
\]
where $\xi$ is the composition 
$e^* (K_X) =\nu^* e_1^* K_X \overset{\nu^* \theta}{\to} \nu^* \bar e_1^*(\bar K)=e^* \bar \nu^* \bar K$.

An acyclic sequence $(E_n, f_n)$ of smooth resolutions of $X$ gives rise to a sequence of functors
\[
\begin{CD}
\D^b_G(X, E_0) @<f_0^*<<  \D^b_G(X, E_1)  @<f_1^*<<  \cdots @<<< \D^b_G(X, E_n) @<<< \cdots.
\end{CD}
\]
The following proposition is  proved in ~\cite[5.3]{Schn08} and ~\cite[Part I, 2]{BL94}.

\begin{prop} 
$\D^b_G(X) \simeq \underleftarrow{\lim} \;  \D^b(X, E_n)$.
\end{prop}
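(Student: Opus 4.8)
The plan is to fix an acyclic sequence $(E_n,f_n)_{n\in\mbb N}$ of smooth resolutions of $X$, which exists by Proposition~\ref{Geometric-Grothendieck}(a), and to compare the defining presentation $\D^b_G(X)=\underleftarrow{\lim}\,\D^b(\bar E_n|e_n)$ with $\underleftarrow{\lim}\,\D^b_G(X,E_n)$ along this sequence. For each $n$ there is an evident forgetful functor
\[
\Phi_n\colon \D^b_G(X,E_n)\longrightarrow \D^b(\bar E_n|e_n),\qquad
(K_X,\bar K,\theta)\mapsto\bar K,\quad (\alpha_X,\bar\alpha)\mapsto\bar\alpha.
\]
It is well defined because the isomorphism $\theta\colon e_n^*K_X\simeq\bar e_n^*\bar K$ exhibits $\bar K$ as an object ``from $X$'' in the sense used to define $\D^b(\bar E_n|e_n)$ (take $L=K_X$), and it is essentially surjective by the very definition of the target: for $\bar K\in\D^b(\bar E_n|e_n)$ choose $L\in\D^b(X)$ with $e_n^*L\simeq\bar e_n^*\bar K$ and note that $(L,\bar K,\text{that isomorphism})$ maps to $\bar K$. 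Since the transition functor $f_n^*$ of the system $\{\D^b_G(X,E_n)\}$ fixes the $X$-component of a triple and acts as $\bar f_n^*$ on its $\bar E$-component, the $\Phi_n$ intertwine the transition functors of the two inverse systems and hence induce a functor $\Phi\colon\underleftarrow{\lim}\,\D^b_G(X,E_n)\to\D^b_G(X)$. The proposition is the assertion that $\Phi$ is an equivalence, and I would prove it by checking full faithfulness and essential surjectivity.

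Everything hinges on one input: restricted to bounded complexes of a fixed cohomological amplitude, the pullback $e_n^*\colon\D^b(X)\to\D^b(E_n)$ is fully faithful for all $n$ sufficiently large. This follows from the $n$-acyclicity of $e_n$: combining the adjunction $\Hom_{E_n}(e_n^*L,e_n^*M)\cong\Hom_X(L,e_{n*}e_n^*M)$ with the isomorphism $M\xrightarrow{\sim}\tau_{\leq n}\,e_{n*}e_n^*M$ — valid for sheaves by (\ref{acyclic}), stably under base change, and hence for bounded complexes in the appropriate range of degrees by d\'evissage — gives $\Hom_X(L,M)\xrightarrow{\sim}\Hom_{E_n}(e_n^*L,e_n^*M)$ as soon as $n$ exceeds the cohomological bounds of $L$ and $M$. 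Granting this, $\Phi$ is fully faithful: a morphism $(K_X,\bar K,\theta)\to(L_X,\bar L,\eta)$ in $\D^b_G(X,E_n)$ is a pair $(\alpha_X,\bar\alpha)$ with $\eta\circ e_n^*(\alpha_X)=\bar e_n^*(\bar\alpha)\circ\theta$, so $\alpha_X$ is forced to be the unique morphism with $e_n^*(\alpha_X)=\eta^{-1}\circ\bar e_n^*(\bar\alpha)\circ\theta$, which exists uniquely by full faithfulness of $e_n^*$ at large $n$; uniqueness makes these lifts automatically compatible along the inverse system, and the values at the remaining finitely many indices are recovered from the tail via the transition functors $f_n^*$. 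For essential surjectivity I would argue as in the discussion preceding the definition of the support: given $(A_n,\phi_n)\in\D^b_G(X)$, choose $B_n\in\D^b(X)$ with $e_n^*B_n\simeq\bar e_n^*A_n$; using $e_n=e_{n+1}f_n$ and the gluing isomorphism $\phi_n$ one identifies $e_n^*B_{n+1}$ with $\bar e_n^*A_n$, hence with $e_n^*B_n$, so $B_n\simeq B_{n+1}$ for $n$ large by faithfulness of $e_n^*$, and the $B_n$ therefore stabilize to a single $B\in\D^b(X)$ compatible with all the $A_n$; the triples $(B,A_n,\theta_n)$ for $n$ large, extended to all $n$ via $f_n^*$, form an object of $\underleftarrow{\lim}\,\D^b_G(X,E_n)$ mapping to $(A_n,\phi_n)$.

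The main obstacle is precisely this one input — upgrading the sheaf-level acyclicity identity (\ref{acyclic}) to full faithfulness of $e_n^*$ on complexes of bounded amplitude — together with the bookkeeping needed to render all the reconstructions above canonical and mutually compatible along the inverse system, so that they assemble into genuine objects and morphisms of $\underleftarrow{\lim}$. This, and the analogous, somewhat more delicate unbounded version, is carried out in \cite[Part~I, 2]{BL94} and \cite[5.3]{Schn08}, which I would simply invoke; the one point special to the bounded case is boundedness itself, which is automatic because every object of the inverse limit has, by construction, cohomology concentrated in finitely many degrees.
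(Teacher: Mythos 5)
Your proposal is correct and is essentially what the paper does: the paper gives no independent argument but simply invokes \cite[Part I, 2]{BL94} and \cite[5.3]{Schn08}, the very references you cite for the key input (full faithfulness of $e_n^*$ in a range of degrees forced by $n$-acyclicity) and for the coherence bookkeeping. Your sketch of the comparison functor and of full faithfulness/essential surjectivity is the standard Bernstein--Lunts argument and is consistent with the paper's treatment.
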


Recall from ~\cite[Part I, 3.2]{BL94} that there exists a functor 
\begin{equation}
\label{tensorfunctor}
\otimes: \D^b_G(X) \times \D^b_G(X) \to \D^b_G(X)
\end{equation}
which is exact on each variable. Equipped with the tensor functor $\otimes$, the derived category $\D^b_G(X)$ is a tensor triangulated category (\cite{B05}).

We refer to ~\cite[I, 5]{BL94} for the definition of $equivariant$ $perverse$ $sheaves$.  
Recall that to any irreducible, $G$-invariant, closed subvariety $X_1$ in $X$, associated  the $equivariant$ $intersection$ $cohomology$, denoted by 
$\IC_G(X_1)$. Let 
\begin{equation}
\label{shiftedintersection}
\widetilde{\IC}_G(X_1) =\IC_G(X_1) [-\dim X_1]. 
\end{equation}
Note that if $X$ is smooth, $\widetilde{\IC}_G(X) =\bar {\mbb Q}_{l, X}$, the constant sheaf on $X$.

\begin{rem}
\label{geometric-identification}
As in ~\cite[18.7]{LMB00}, the equivariant derived category $\D^*_G(X)$ for $*=\{\phi, +, -, b\}$ is equivalent to the derived category $D^*_c([G\backslash X], \bar{\mbb Q}_l)$ of complexes of  
lisse-\'{e}tale $\bar{\mbb Q}_l$-sheaves on the quotient stack $[G\backslash X]$ of cartesian constructible cohomologies defined in ~\cite{LMB00}. 
{\em We shall identify these two categories throughout the paper.}
\end{rem}

\subsection{General inverse functor  and general direct image functor}
\label{generaldirectimage}

Throughout this paper, we assume that  for any homomorphism, say $\phi: H\to G$, of linear algebraic groups,
\begin{itemize}
\item $H=G\times G_1$ for some linear algebraic group $G_1$ and $\phi$ is the projection to $G$.
\end{itemize}

A morphism $f: Y\to X$ is called a 
$\phi$-$map$ if $ f(h.y)=\phi(h).f(y)$ for any $h\in H$, $y\in Y$. Any $\phi$-map $f: Y\to X$ gives rise to a morphism 
\begin{equation}
\label{general-f}
 Q\! f: [H\backslash Y] \to [G\backslash X]
 \end{equation}
of algebraic stacks (\cite{LMB00}) defined as follows.
For any given resolution $Y\overset{e}{\leftarrow} E$,  the composition $X \overset{fe} {\leftarrow} E$ 
factors through the quotient map $q: E \to G_1\backslash E$ due to the assumption that $G_1$ acts trivially on $X$.
In other words, there exists a unique morphism $X \overset{e_1}{\leftarrow} G_1\backslash E$ such that $e_1 q=fe$. Moreover, 
$H\backslash E = G\backslash (G_1\backslash E)$.
For any morphism $\nu: E \to E'$ in $[H\backslash Y]$,  it naturally induces a morphism $\bar \nu: \bar E \to \bar E'$ in $[G\backslash X]$.
The morphism  $Q\! f$ is then defined to be 
\begin{equation*}
\begin{split}
&Q \! f:   Y \overset{e}{\leftarrow} E  \mapsto   X\overset{e_1}{\leftarrow} G_1\backslash E, \quad \forall H \mbox{-resolution}  \; Y\overset{e}{\leftarrow} E \in [H\backslash Y];\\
&Q \! f:  E \overset{\nu}{\to} E' \mapsto   \bar E  \overset{\bar{\nu}}{\to} \bar E', \quad \forall \; \mbox{morphism} \; E \overset{\nu}{\to} E' \in [H\backslash Y].
\end{split}
\end{equation*}

By ~\cite{LMB00}, ~\cite{LO08b}, and Remark ~\ref{geometric-identification},   the morphism $Q\!f$ gives rise to the following functors:
\begin{equation}
\label{inducedfunctors}
\begin{split}
&Q \! f_*: \D^+_H(Y) \to \D^+_G(X), \quad Q\! f_!: \D^-_H(Y) \to \D^-_G(X),\\
&Q\! f^*: \D_G(X) \to \D_H(Y) , \quad Q \! f^!: \D_G(X) \to \D_H(Y).
\end{split}
\end{equation}
When $H=G$, we simply write $f$ for the morphism $Q\! f$, and  $f_*$, $f_!$, $f^*$, $f^!$ for the functors $Q\!f_*$, $Q\! f_!$, $Q\! f^*$ and $Q\! f^!$, respectively.
These functors satisfy the following properties. We refer to ~\cite{LMB00} and ~\cite{LO08b} for proofs of these properties.

\begin{lem}
The pairs $(Q \! f^*, Q\! f_*)$ and  $(Q\! f_!, Q\! f^!)$ are  adjoint pairs.
\end{lem}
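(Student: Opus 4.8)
The plan is to deduce both adjunctions from the classical adjunctions for a morphism of varieties, transported through the inverse-limit description of the equivariant categories and the identification of $\D^*_G(X)$ with the lisse-\'etale derived category $D^*_c([G\backslash X],\bar{\mbb Q}_l)$ of the quotient stack recorded in Remark~\ref{geometric-identification}. Under that identification the four functors $Q\!f_*,Q\!f_!,Q\!f^*,Q\!f^!$ are, by their construction in \eqref{inducedfunctors}, nothing but the derived direct image, direct image with compact support, inverse image and twisted inverse image attached to the morphism of algebraic stacks $Q\!f:[H\backslash Y]\to[G\backslash X]$ of \eqref{general-f}. Granting this, the adjunctions $(Q\!f^*,Q\!f_*)$ (on $\D^+$) and $(Q\!f_!,Q\!f^!)$ (on $\D^-$) are instances of the six-operation formalism on lisse-\'etale derived categories of algebraic stacks established in ~\cite{LO08b}, and, in the Bernstein--Lunts guise, in ~\cite[Part I]{BL94}; so the Lemma follows.

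To make the mechanism transparent I would also spell out the hands-on verification through the inverse-limit model. Fix an acyclic sequence $(E_n,f_n)$ of smooth $H$-resolutions of $Y$; using the factorization $e_{1,n}q_n=fe_n$ through the $G_1$-torsor $q_n:E_n\to G_1\backslash E_n$ one gets $H\backslash E_n=G\backslash(G_1\backslash E_n)=:\bar E_n$, so that at level $n$ the morphism $Q\!f$ is modeled by a morphism of the chart spaces $\bar E_n$, and $Q\!f_*,Q\!f_!,Q\!f^*,Q\!f^!$ are computed there by the ordinary four functors of that morphism. On the chart spaces we are dealing with genuine varieties, for which $((\cdot)^*,(\cdot)_*)$ and $((\cdot)_!,(\cdot)^!)$ are the classical adjoint pairs of ~\cite{BBD82}. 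The unit $\mrm{id}\to Q\!f_*Q\!f^*$, the counit $Q\!f^*Q\!f_*\to\mrm{id}$, and their analogues for $(Q\!f_!,Q\!f^!)$, are then assembled from the levelwise (co)units, and the triangle identities hold because they already hold at each finite level.

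The main obstacle is exactly this assembling step: one must check that the levelwise adjunction data are \emph{compatible with the transition isomorphisms} $\phi_n$ of the inverse limit, so that they descend to morphisms of pro-objects rather than living only at finite level. This compatibility is the content of the base-change isomorphisms relating the transition functors $\bar f_n^*$ with the direct and inverse images at adjacent levels (smooth base change where the smooth structure maps $e_n$ intervene, proper base change for $(Q\!f_!,Q\!f^!)$, which is also why $Q\!f_!$ is restricted to bounded-above complexes), together with the $n$-acyclicity of the resolutions needed to identify the relevant complexes on the chart spaces up to truncation. Checking these compatibilities at the level of the lisse-\'etale topoi of the approximating stacks is precisely the technical heart of ~\cite{LO08b}, which is why the cleanest route to the Lemma is to quote that reference rather than re-derive the base-change machinery here; a minor additional point is the bookkeeping in the first step, namely that quotienting by $G_1$ does carry smooth $H$-resolutions of $Y$ to smooth $G$-resolutions of $X$, which follows by descent of smoothness along the faithfully flat maps $q_n$.
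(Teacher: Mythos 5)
Your proposal is correct and takes essentially the same approach as the paper: the paper offers no argument of its own for this lemma, simply viewing the four functors as the operations attached to the morphism of quotient stacks $Q\!f$ under the identification of Remark~\ref{geometric-identification} and referring to ~\cite{LMB00} and ~\cite{LO08b} for the adjunctions. Your supplementary inverse-limit sketch is extra (and, as you yourself note, its compatibility checks are precisely what is deferred to ~\cite{LO08b}), so it does not change the substance.
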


\begin{lem}
\label{general-distribution}
$Q\!f^*(A\otimes A') =Q\!f^* A \otimes Q\! f^* A'$, for any $A, A'\in \D^b_{G} (X)$.
\end{lem}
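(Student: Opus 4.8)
The plan is to reduce the assertion to the classical non-equivariant identity $g^*(P\otimes P')\simeq g^*P\otimes g^*P'$ for a morphism $g$ of varieties (see ~\cite{BBD82}, ~\cite{KW01}), by exploiting the fact that both the monoidal structure $\otimes$ on $\D^b_G(X)$ and the functor $Q\!f^*$ are computed ``componentwise'' with respect to smooth resolutions, equivalently with respect to a smooth atlas of the quotient stack.

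The quickest route is via the identification of Remark ~\ref{geometric-identification}: $\D_G(X)\simeq D^b_c([G\backslash X],\bar{\mbb Q}_l)$ and $\D_H(Y)\simeq D^b_c([H\backslash Y],\bar{\mbb Q}_l)$, under which $Q\!f^*$ becomes the lisse-\'etale inverse image functor attached by ~\cite{LMB00}, ~\cite{LO08b} to the morphism of algebraic stacks $Q\!f\colon [H\backslash Y]\to[G\backslash X]$ of ~\eqref{general-f}, and $\otimes$ becomes the derived tensor product of $\bar{\mbb Q}_l$-sheaves. Since $\bar{\mbb Q}_l$ is a field, every sheaf of $\bar{\mbb Q}_l$-modules is flat; hence the derived tensor product is already computed at the sheaf level, the functor $Q\!f^*$ is exact, and it suffices to establish the identity for a single lisse-\'etale sheaf. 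There it amounts to the statement that the inverse-image functor of the morphism of ringed topoi underlying $Q\!f$ is monoidal, a general property of morphisms of ringed topoi.

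Alternatively, staying with the concrete presentations recalled above, I would argue through an acyclic sequence $(E_n,f_n)$ of smooth resolutions of $X$: using $\D^b_G(X)\simeq\underleftarrow{\lim}\,\D^b(X,E_n)$, an object is a compatible system of triples $(K_X,\bar K_n,\theta_n)$, the tensor product is given componentwise by $(K_X\otimes L_X,\ \bar K_n\otimes\bar L_n,\ \theta_n\otimes\theta'_n)$ — which is legitimate because $e^*$ and $\bar e_n^*$ commute with $\otimes$ — and, by the construction in ~\eqref{general-f}, $Q\!f^*$ is likewise computed componentwise from $Q\!f^*$ on the $X$-component and non-equivariant inverse images along the induced maps of quotient spaces on the other component. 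Applying the non-equivariant identity to each component produces the desired isomorphism, and it remains only to check that this componentwise isomorphism is compatible with the structural isomorphisms $\theta_n$ and the transition isomorphisms $\phi_n$, so that it descends to an isomorphism in the inverse-limit category.

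The step requiring the most care — bookkeeping rather than a genuine difficulty — is precisely this last verification of the coherence data: one must confirm that the canonical isomorphism $e^*(K_X\otimes L_X)\simeq e^*K_X\otimes e^*L_X$ is compatible with the pullback isomorphisms defining $Q\!f^*$ and with $\theta_n\otimes\theta'_n$. Since all of these arise from the universal properties of the $\otimes$-structure and of $g^*$ on each space, the required compatibilities are formal; writing them out cleanly is where the work lies, and no new geometric input is needed beyond ~\cite{BL94}, ~\cite{Schn08}, ~\cite{LMB00}, ~\cite{LO08b} and the non-equivariant case.
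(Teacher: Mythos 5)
The paper offers no independent argument for this lemma: it is one of the properties quoted with a reference to ~\cite{LMB00} and ~\cite{LO08b}, and your first route---passing through Remark ~\ref{geometric-identification} and invoking the monoidality of the lisse-\'etale inverse image functor attached to the morphism of stacks $Q\!f$---is exactly that citation made explicit, so your proposal is correct and essentially the paper's approach. The only caveat is that the flatness shortcut (``$\bar{\mbb Q}_l$-sheaves are flat, so the tensor product is underived and one may test on a single sheaf'') oversimplifies the adic formalism of ~\cite{LO08b}, where $\bar{\mbb Q}_l$-coefficients are built from torsion levels; this does not affect the conclusion, since the monoidality of $Q\!f^*$ is available in the cited references directly.
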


\begin{lem}
\label{general-composition}
If, moreover, $\psi: I\to H$ is a morphism of linear  algebraic groups, and $g: Z\to Y$ a $\psi$-map,  we have
\[
Qg^*  Q\! f^*\simeq  Q(fg)^* \quad \mbox{and} \quad Q\! f_* Qg_* \simeq Q(fg)_*.
\]
\end{lem}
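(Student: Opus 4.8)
The plan is to split the claim into a geometric part and a formal part. The geometric part is that the composite morphism of stacks $Qf\circ Qg$ agrees, up to a canonical $2$-isomorphism, with $Q(fg)$; the formal part is that, granting this, the two displayed isomorphisms are nothing but the contravariant functoriality $(uv)^*\simeq v^*u^*$ of pullback and the covariant functoriality $(uv)_*\simeq u_*v_*$ of pushforward for composable morphisms $u,v$ of algebraic stacks, applied to $u=Qf$, $v=Qg$. The latter is part of the standard six-operation formalism on the lisse-\'etale derived categories $D^*_c([G\backslash X],\bar{\mbb Q}_l)$ recorded in ~\cite{LMB00} and ~\cite{LO08b}, which we have identified with the equivariant categories $\D^*_G(X)$ in Remark ~\ref{geometric-identification}; note that $Qf_*$ and $Qg_*$ are the $*$-pushforwards, so the second isomorphism is read inside $\D^+$.

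For the geometric part I would unwind the recipe (\ref{general-f}). By the standing assumption, $H=G\times G_1$ and $I=H\times G_2=G\times G_1\times G_2$ with $\phi,\psi$ the projections; hence $\phi\psi:I\to G$ is again the projection, $fg$ is a $\phi\psi$-map, and $Q(fg)$ is defined. Take an $I$-resolution $Z\overset{d}{\leftarrow} D$. Applying $Qg$ gives the $H$-resolution $Y\overset{d_1}{\leftarrow} G_2\backslash D$ with $d_1 q'=gd$, where $q':D\to G_2\backslash D$; applying $Qf$ to it gives the $G$-resolution $X\overset{(d_1)_1}{\leftarrow} G_1\backslash(G_2\backslash D)$ with $(d_1)_1 q''=fd_1$, where $q'':G_2\backslash D\to G_1\backslash(G_2\backslash D)$. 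Directly, $Q(fg)$ sends $D$ to $X\overset{d_1'}{\leftarrow}(G_1\times G_2)\backslash D$ with $d_1'\tilde q=fgd$, where $\tilde q:D\to(G_1\times G_2)\backslash D$. The iterated quotient $G_1\backslash(G_2\backslash D)$ is canonically identified with $(G_1\times G_2)\backslash D$, and under this identification $\tilde q=q''q'$; since $(d_1)_1 q''q'=fd_1q'=fgd=d_1'\tilde q$ and a morphism to the variety $X$ factors uniquely through the quotient map $q''q'$ (an fppf cover, hence an epimorphism), we get $(d_1)_1=d_1'$. The analogous bookkeeping on morphisms of resolutions — using $H\backslash E=G\backslash(G_1\backslash E)$ and its one-step iterate — shows that $Qf\circ Qg$ and $Q(fg)$ agree on arrows and that the identifications above are natural, which yields the desired $2$-isomorphism.

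The step I expect to be the real (if modest) obstacle is precisely this naturality: one must verify that the canonical isomorphism $(G_1\times G_2)\backslash D\cong G_1\backslash(G_2\backslash D)$ is compatible with every structure map $\bar\nu$ appearing in the definition of $Q(-)$ on morphisms, so that the collection of object-level identifications assembles into a $2$-morphism of stack morphisms rather than a mere pointwise coincidence; care is also needed in tracking which factor of $I=G\times G_1\times G_2$ acts trivially on each of $X,Y,Z$. All the remaining input — existence of acyclic sequences of smooth resolutions, independence of the choices, and the six-operation functoriality — is supplied by Proposition ~\ref{Geometric-Grothendieck} and the cited references, so no new derived-category analysis is required. (Alternatively one could bypass the stack language and argue directly in the inverse-limit categories by fixing a compatible system of acyclic sequences and composing the level-$n$ functors, but the stack formulation makes the functoriality transparent.)
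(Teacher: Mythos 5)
Your proposal is correct and follows the route the paper itself intends: the paper gives no written proof of this lemma, simply referring to ~\cite{LMB00} and ~\cite{LO08b} for the functoriality of the six operations on the lisse-\'etale derived categories, which is exactly the formal half of your argument. Your additional verification that $Q\!f\circ Qg$ agrees with $Q(fg)$ via the canonical identification $G_1\backslash(G_2\backslash D)\cong(G_1\times G_2)\backslash D$ is the correct (and only) geometric input needed to invoke that citation, so the proposal is a sound elaboration of the same approach rather than a different one.
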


\begin{lem}
\label{general-projection}
$Q\! f_! (A \otimes Q\! f^* (B) )  \simeq Q\! f_!(A) \otimes B$, for any $A\in \D^-_H(Y)$ and $B\in \D^-_G(X)$. 
\end{lem}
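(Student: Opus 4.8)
\emph{Proof proposal.}

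The cleanest route is to recognize the statement as the projection formula in the six-operation formalism on quotient stacks. By Remark~\ref{geometric-identification}, $\D^-_H(Y)$ and $\D^-_G(X)$ are the derived categories of lisse-\'{e}tale $\bar{\mbb Q}_l$-sheaves on the algebraic stacks $[H\backslash Y]$ and $[G\backslash X]$; the $\phi$-map $f$ induces the morphism of stacks $Q\!f$ of~(\ref{general-f}); and, by~\cite{LMB00} and~\cite{LO08b}, the functors $Q\!f_!$, $Q\!f^*$ of~(\ref{inducedfunctors}) and the tensor product of Lemma~\ref{general-distribution} are the corresponding operations in that formalism, with $(Q\!f_!,Q\!f^!)$ an adjoint pair. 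The projection formula $Q\!f_!(A\otimes Q\!f^*B)\simeq Q\!f_!(A)\otimes B$ is then a special case of the projection formula established in~\cite{LO08b} for the six operations on such stacks, and I would state it in this form, referring to~\cite{LMB00} and~\cite{LO08b} as for the preceding lemmas.

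For a more hands-on argument staying within the Bernstein--Lunts approximations, one reduces to the classical, non-equivariant projection formula. First I would fix an acyclic sequence $(E_n,f_n)$ of smooth resolutions of $Y$ with respect to $H$ and an acyclic sequence of smooth resolutions of $X$ with respect to $G$, and pass to a cofinal system of common refinements (using Proposition~\ref{Geometric-Grothendieck}(a),(b)) so that, at each level $n$, $f$ is realized by an honest morphism $\tilde f_n$ of ordinary varieties and the three operations $Q\!f_!$, $Q\!f^*$, $\otimes$ are computed levelwise by $(\tilde f_n)_!$, $\tilde f_n^*$, and the ordinary derived tensor product; this follows by unwinding the construction of $Q\!f$ in Subsection~\ref{generaldirectimage} together with~\cite{LMB00}, \cite{LO08b}. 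Here $Q\!f^*$ preserves boundedness above because $\tilde f_n^*$ is exact on sheaves, and $Q\!f_!$ does because each $(\tilde f_n)_!$ has finite cohomological amplitude, so that both sides of the asserted isomorphism lie in $\D^-_G(X)$. Now the classical projection formula (see e.g.~\cite{KW01}, \cite{BBD82}) gives, for $A=(A_n,\phi_n)$ and $B=(B_n,\psi_n)$ and each $n$, a canonical isomorphism
\[
\rho_n:\ (\tilde f_n)_!\bigl(A_n\otimes \tilde f_n^* B_n\bigr)\ \overset{\sim}{\longrightarrow}\ (\tilde f_n)_!(A_n)\otimes B_n .
\]
Because $\rho_n$ is natural in both arguments and compatible with pullback and base change, it commutes with the structure isomorphisms $\phi_n$, $\psi_n$ and with the transition functors $\bar f_n^*$; hence $(\rho_n)_{n\in\mbb N}$ defines an isomorphism in $\D^-_G(X)$ between $Q\!f_!(A\otimes Q\!f^*B)$ and $Q\!f_!(A)\otimes B$, and the same reasoning yields its functoriality in $A$ and $B$.

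The only real difficulty, in either approach, is bookkeeping rather than geometry: in the stacky approach, matching the operations~(\ref{inducedfunctors}) on the equivariant categories with those of Laszlo--Olsson (already taken for granted via Remark~\ref{geometric-identification} and the references); in the approximation approach, arranging a single cofinal system of smooth resolutions through which $Q\!f_!$, $Q\!f^*$ and $\otimes$ are all computed simultaneously, so that the classical projection formula applies verbatim and its naturality is precisely what lets the isomorphism descend to the inverse limit, together with the routine finiteness checks keeping everything inside $\D^-$. No geometric input beyond the classical projection formula and the material of Section~\ref{geometric} is required.
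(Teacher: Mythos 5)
Your first route is exactly what the paper does: Lemma~\ref{general-projection} is stated without proof, with the reader referred to \cite{LMB00} and \cite{LO08b}, where the projection formula is established for the six operations on the quotient stacks $[H\backslash Y]\to[G\backslash X]$ via the identification of Remark~\ref{geometric-identification}. That matches the paper's approach; your alternative Bernstein--Lunts levelwise argument is not needed (and its claim that $Q\!f_!$ is computed levelwise by $(\tilde f_n)_!$ would require extra care, since the fibers of $Q\!f$ contain a $BG_1$ factor), but the primary argument is the one the paper intends.
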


\begin{lem}
\label{composition}
If, moreover, $\psi: I\to H$ is a morphism of  linear algebraic groups, and $g: Z\to Y$ a $\psi$-map,  we have
\[
Qg^!  Q\! f^!\simeq  Q(fg)^! \quad \mbox{and} \quad Q\! f_! Qg_! \simeq Q(fg)_!.
\]
\end{lem}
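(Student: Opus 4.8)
The statement to be proved is the compatibility of the general direct and inverse image functors with composition in the $!$-setting: $Qg^! Q\!f^! \simeq Q(fg)^!$ and $Q\!f_! Qg_! \simeq Q(fg)_!$, for a $\psi$-map $g\colon Z\to Y$ and a $\phi$-map $f\colon Y\to X$, where $\psi\colon I\to H$ and $\phi\colon H\to G$ satisfy the standing hypothesis (so that $I=H\times I_1$, $H=G\times G_1$, with the relevant factors acting trivially downstairs). The plan is to reduce both isomorphisms to the corresponding statements on quotient stacks, which are already available from \cite{LMB00} and \cite{LO08b}, and then to track the construction of $Q\!f$, $Qg$ through a single common system of resolutions.

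First I would establish that the assignment $f\mapsto Q\!f$ is functorial, i.e. $Q(fg)\simeq (Q\!f)\circ(Qg)$ as morphisms of algebraic stacks $[I\backslash Z]\to[G\backslash X]$. This is the analogue, at the level of stacks, of Lemma~\ref{general-composition} and Lemma~\ref{composition} at the level of functors, and it follows by unwinding the defining recipe for $Q\!f$ in~\eqref{general-f}: given an $I$-resolution $Z\overset{e}{\leftarrow}E$, the composite $Z\overset{e}{\leftarrow}E$ followed by $g$ and then $f$ factors through $I_1\backslash E$ and then through $(I_1\times G_1)\backslash E$ — here one uses that $\psi$ and $\phi$ are honest projections, so that the ``extra'' groups act trivially on the targets and the quotients stack up as $I\backslash E = G\backslash(G_1\backslash(I_1\backslash E))$. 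The compatibility on morphisms $\nu\colon E\to E'$ is immediate from the definition. Once $Q(fg)\simeq Q\!f\circ Qg$ is in hand, I would invoke the functoriality of the six operations on the lisse-\'etale derived categories of stacks (the identification of Remark~\ref{geometric-identification}, together with \cite{LMB00}, \cite{LO08b}): for composable morphisms of stacks $u,w$ one has $w^!u^!\simeq(uw)^!$ and $u_!w_!\simeq(uw)_!$, and applying this to $u=Q\!f$, $w=Qg$ gives exactly the two claimed isomorphisms.

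A cleaner alternative, if one wants to stay within the inverse-limit model rather than citing the stacky six-functor formalism wholesale, is to fix a single acyclic sequence $(E_n,f_n)$ of smooth resolutions of $Z$ with respect to $I$ and to show that $Q\!f_!$, $Qg_!$, $Q(fg)_!$ are all computed, level by level, by the ordinary $!$-pushforward along the induced maps $\overline{G_1\backslash E_n}\to\cdots$ of the quotient varieties, at which point the isomorphism $Q\!f_! Qg_!\simeq Q(fg)_!$ becomes the ordinary $f_!g_!\simeq(fg)_!$ on varieties, compatible with the transition maps $f_n^*$; one then passes to the inverse limit using Proposition~\ref{Geometric-Grothendieck}(c). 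The $!$-inverse-image statement can either be done the same way or deduced from the $!$-pushforward statement by the adjunction $(Q\!f_!,Q\!f^!)$ (Lemma in the excerpt): from $Q\!f_!Qg_!\simeq Q(fg)_!$ one gets, for all $A,B$, natural isomorphisms $\Hom(Q(fg)_!A,B)\simeq\Hom(A,Qg^!Q\!f^!B)$ and $\simeq\Hom(A,Q(fg)^!B)$, and Yoneda then yields $Qg^!Q\!f^!\simeq Q(fg)^!$.

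The main obstacle is the bookkeeping needed to make ``a single common system of resolutions works simultaneously for $Z$, $Y$, and $X$'' precise: one must check that an $I$-resolution of $Z$ does induce, compatibly, an $H$-resolution of $Y$ (via $I_1\backslash(-)$) and a $G$-resolution of $X$ (via $(I_1\times G_1)\backslash(-)$), that acyclicity is inherited along these quotient steps, and that the transition maps $f_n$ remain closed immersions after quotienting — this is where the standing hypothesis that $\phi,\psi$ are projections off a direct factor is genuinely used, and where a careless argument could break. Everything else is a routine diagram chase once the functoriality $Q(fg)\simeq Q\!f\circ Qg$ of the stack morphisms is recorded; I expect the proof to be short, amounting to ``reduce to the non-equivariant case level by level, or cite the stacky formalism, and conclude.''
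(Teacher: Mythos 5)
Your main route --- verifying $Q(fg)\simeq Q\!f\circ Qg$ directly from the definition (\ref{general-f}) (using that $\phi,\psi$ are projections off direct factors) and then invoking the composition isomorphisms for $(-)_!$ and $(-)^!$ on the derived categories of the quotient stacks from \cite{LMB00} and \cite{LO08b} via Remark~\ref{geometric-identification} --- is essentially the paper's own approach, since the paper offers no argument beyond referring to those two sources for the proofs of these properties. Your proposal is correct, and the adjunction/Yoneda deduction of $Qg^!Q\!f^!\simeq Q(fg)^!$ from $Q\!f_!Qg_!\simeq Q(fg)_!$ is a sound way to handle the $!$-pullback half.
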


Suppose that 
\[
I=G\times G_1 \times G_2, \quad H=G\times G_1, \quad \mbox{and} \quad H_1=G\times G_2.
\]
We denote the projections of linear algebraic groups as follows.
\[
\begin{CD}
I @>\psi>> H @>\phi>> G, \quad \mbox{and}  \quad 
I @>\psi_1>> H_1 @>\phi_1>> G.
\end{CD}
\]
Assume, further,  that we have a  cartesian diagram 
\[
\begin{CD}
Z @>g>> Y\\
@Vf_1VV @VfVV\\
Y_1 @>g_1>> X,
\end{CD}
\]
and the morphisms $f$, $g$,   $f_1$ and $g_1$ are $\phi$, $\psi$, $\psi_1$ and $\phi_1$ maps, respectively.
This  cartesian  diagram then gives rise to the following  cartesian diagram 
\begin{equation}
\label{cartesiandiagram}
\begin{CD}
[I\backslash Z] @>Q g>> [H\backslash Y]\\
@VQ\!f_1VV @VQ\! fVV\\
[H_1\backslash Y_1] @>Qg_1>> [G\backslash X].
\end{CD}
\end{equation}

In fact, one can define a morphism, 
\[
\mathscr X \overset{\xi}{\to} [I\backslash Z],\]  
from the fiber product $\mathscr X:=[H_1\backslash Y_1] \times_{[G\backslash X]} [H\backslash Y]$ to $[I\backslash Z]$ as follows.
To a triple $T=( E_1\to Y_1, E\to Y, \alpha:  Qg_1( E_1\to Y_1) \simeq Q\! f( E\to Y))$ in the fiber product $\mathscr X$,   we form the fiber product 
$E_1 \times_{(G_1\backslash E)} E$, on which there is a free $I$-action induced from the free $H_1$ (resp. $H$, $G$) action on $E_1$ (resp. $E$, $G_1\backslash E$). 
It is clear that $I\backslash E_1\times_{(G_1\backslash E)} E\simeq H\backslash E$.
By the universal property of the cartesian diagram $(g, f; f_1, g_1)$, we see that there is a unique $I$-equivariant morphism $ E_1\times_{(G_1\backslash E)} E \to Z$. 
The morphism  $\xi$ is  defined by sending  the triple $T$ 
to $ E_1\times_{(G_1\backslash E)} E \to Z$. 
By the universal property of $\mathscr X$, there is a morphism $\xi_1: [I\backslash Z]\to \mathscr X$. It is not difficult to show that $\xi \xi_1 = \mbox{id}_{[I\backslash Z]}$ and
$\xi_1\xi =\mbox{id}_{\mathscr X}$. 
So $[I\backslash Z]$ is isomorphic to the fiber product $\mathscr X$.  Therefore, the diagram above is cartesian.

The following lemma holds
from the above cartesian diagram (\ref{cartesiandiagram}) of algebraic stacks.

\begin{lem}
\label{general-basechange}
$Q\! f^* Qg_{1!} \simeq Qg_! Q\!f_1^*$.
\end{lem}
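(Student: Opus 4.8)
The plan is to deduce the equivariant base-change isomorphism $Q\!f^* Qg_{1!} \simeq Qg_! Q\!f_1^*$ from the ordinary (non-equivariant) smooth base-change theorem applied level-by-level along an acyclic sequence of smooth resolutions, together with the cartesian diagram \eqref{cartesiandiagram} of algebraic stacks already established in the preceding paragraph. The point is that all four functors in the statement are defined, via Remark~\ref{geometric-identification} and the constructions of Section~\ref{generaldirectimage}, as the functors induced on derived categories of the quotient stacks by the morphisms $Q\!f$, $Qg$, $Q\!f_1$, $Qg_1$, and the square formed by these four stack morphisms is cartesian. So the statement is really the base-change isomorphism for $l$-adic sheaves on algebraic stacks, which is available from \cite{LMB00} and \cite{LO08b}; the work is to check that the hypotheses under which it holds are met in our situation.

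First I would fix, using Proposition~\ref{Geometric-Grothendieck}(a), an acyclic sequence $(E_n, f_n)$ of smooth resolutions of $X$ with respect to $G$. Pulling this back along $Q\!f$, $Qg_1$, and $Q\!f_1\,Qg$ (equivalently $Qg\,Q\!f_1$, using that the square commutes) produces compatible acyclic sequences of smooth resolutions for $[H\backslash Y]$, $[H_1\backslash Y_1]$, and $[I\backslash Z]$: concretely, given $E_n \to X$ one forms the fiber products $E_n \times_X Y$, $E_n \times_X Y_1$, $E_n \times_X Z$, on which the larger groups act, and the freeness of the $G_1$-, $G_2$-, $(G_1\times G_2)$-actions is exactly what makes these resolutions in the sense of Section~\ref{equivariant}. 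Smoothness of $e_n$ is inherited by base change, and the $n$-acyclicity of the quotient maps $\bar e_n$ is inherited because $n$-acyclicity (condition \eqref{acyclic}) is stable under base change by its very definition. At each finite level $n$ we then have an honest cartesian square of ordinary varieties (the fiber products of the $\bar E_n$'s), with the vertical maps pulled back from the cartesian square $(g,f;f_1,g_1)$ of the original varieties, so in particular $g_1$ (hence its base changes) inherits whatever property it has — and for the $!$-pushforward one needs the horizontal maps to be morphisms of finite type, which holds since all varieties here are of finite type over $k$.

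The main computation is then: apply ordinary base change on each $\bar E_n$-level square. One needs a version that does not require properness — the natural statement is the smooth base-change isomorphism $h^* k_! \simeq k'_!\, h'^*$ for a cartesian square with $h$ (the map we pull back along) smooth, which is exactly the situation since $Q\!f$ and its resolution-level incarnations are smooth (the original resolution maps $e_n$ are smooth, and this is what makes the stack morphism $Q\!f$ smooth). Alternatively one invokes the general $l$-adic base-change theorem for $\overline{\mbb Q}_l$-complexes. Having the isomorphism $\bar f_n^*\, (\bar g_{1,n})_! \simeq (\bar g_n)_!\, \bar f_{1,n}^*$ at each level, one checks these are compatible with the transition functors $f_n^*$ of the acyclic sequences — this compatibility is the standard coherence of base change isomorphisms under composition and is built into the formalism of \cite{LMB00}, \cite{LO08b} — and therefore they assemble, by Proposition~\ref{Geometric-Grothendieck}(b) and the description of $\D^b_G(X)$ as the inverse limit, into the desired isomorphism of functors on the equivariant derived categories.

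The step I expect to be the main obstacle is the bookkeeping of the acyclicity and smoothness properties through the two-step quotients: recall that, because $H = G\times G_1$ etc., the stack morphism $Q\!f$ was defined by first passing to the $G_1$-quotient $G_1\backslash E$ and then taking the $G$-quotient, and one must verify that the resolution of $X$ obtained this way, $G_1\backslash E_n \to X$, is still smooth and $n$-acyclic (equivalently, that $Q\!f$ behaves well with respect to acyclic sequences), and that the fiber-product constructions at each level really do compute the fiber product $\mathscr X$ of stacks rather than something larger — but this last point is precisely the content of the paragraph just before the lemma, where $\xi\xi_1 = \mrm{id}$ and $\xi_1\xi = \mrm{id}$ were verified, so it may be quoted. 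Once these compatibilities are in hand, the lemma follows formally from level-wise ordinary base change.
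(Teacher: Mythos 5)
Your high-level route is the paper's: the paper establishes that the square (\ref{cartesiandiagram}) of quotient stacks is cartesian (the $\xi,\xi_1$ argument you quote) and then deduces the lemma directly from the base change isomorphism for $\bar{\mbb Q}_l$-complexes on Artin stacks supplied by \cite{LMB00} and \cite{LO08b}; no level-by-level verification is attempted there. Had you stopped after your first paragraph, your proof would essentially coincide with the paper's.

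The difficulty is with the level-wise argument that you offer as the actual verification. The fiber products $E_n\times_X Y$, $E_n\times_X Y_1$, $E_n\times_X Z$ are \emph{not} resolutions with respect to $H$, $H_1$, $I$ in the sense of Section~\ref{equivariant}: the $G$-factor acts freely (through $E_n$), but $G_1$ acts on $E_n\times_X Y$ only through its given action on $Y$, which is arbitrary --- nothing in Section~\ref{generaldirectimage} makes that action free, so ``the freeness of the $G_1$-, $G_2$-, $(G_1\times G_2)$-actions'' you invoke is simply not available. Consequently these spaces are not objects of $[H\backslash Y]$, $[H_1\backslash Y_1]$, $[I\backslash Z]$, and the level-$n$ squares you write down do not compute the functors $Q\!f^*$, $Qg_!$, $Q\!f_1^*$, $Qg_{1!}$ (recall $Q\!f$ is defined by the two-step passage $E\mapsto G_1\backslash E$, which presupposes a free $H$-action on $E$). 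To repair this you would have to resolve the $G_1$- and $G_2$-actions as well, e.g.\ by multiplying with free $G_1$- and $G_2$-spaces, and then the resulting level-$n$ squares of quotient varieties are no longer cartesian on the nose (extra acyclic factors appear), which is precisely the technical work that the paper delegates to \cite{LO08b}. A smaller point: your appeal to \emph{smooth} base change is misplaced --- $Q\!f$ has no reason to be smooth, and smoothness of the resolution maps $e_n$ is irrelevant to it --- but this is harmless, since base change of $*$-pullback against $!$-pushforward holds for arbitrary cartesian squares of finite-type morphisms; it is the $g_*$ direction that would require smoothness.
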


\subsection{Compatibility with localization}

Let $\mathcal A, \mathcal B$ and $ \mathcal C$ be three triangulated categories with exact functors
\[
\mathcal A \overset{f_1} {\to} \mathcal B \overset{f_2}{\to} \mathcal C.
\]
Suppose that $\T_{\mathcal A}$, $\T_{\mathcal B}$ and $\T_{\mathcal C}$ are thick subcategories of $\mathcal A$, $\mathcal B$,
and $\mathcal C$, respectively.
Then we have three ``exact sequences'' of categories (\cite[1.4.4]{BBD82})
\begin{align*}
\begin{split}
0 \to \T_{\mathcal A} \overset{\iota}{\to} \mathcal A \overset{Q}{\to} \mathcal A/\T_{\mathcal A} \to 0, \quad 
0 \to \T_{\mathcal B} \overset{\iota}{\to} \mathcal B\overset{Q}{\to} \mathcal B/\T_{\mathcal B} \to 0,\quad
0 \to \T_{\mathcal C} \overset{\iota}{\to} \mathcal C \overset{Q}{\to} \mathcal C/\T_{\mathcal C} \to 0
\end{split}
\end{align*}
Moreover, we assume that each localization functor $Q$ admits a left  adjoint $Q_!$ and a right adjoint $Q_*$. 
This implies that the functor $\iota$ admits a left adjoint $\iota^*$ and a right adjoint $\iota^!$ (\cite[\S 2]{Verdier76}) .
We form the following functors
\begin{align*}
\begin{split}
F_1= Q \circ f_1 \circ Q_!:  & \A/\T_{\A} \to  \mathcal B/\T_{ \mathcal B}; \quad 
F_2 =Q\circ f_2 \circ Q_!: \mathcal B/ \T_{\mathcal B} \to \mathcal C/ \T_{\mathcal C};\\
&F_3= Q\circ f_2f_1 \circ Q_!: \A/\T_{\A} \to \mathcal C/\T_{\mathcal C}.
\end{split}
\end{align*}

\begin{lem}
\label{localization-composition}
If $\iota^* f_1 Q_!=0$  or $Q \circ  f_2\circ  \iota=0$, then
$F_2 \circ F_1 =F_3$.
\end{lem}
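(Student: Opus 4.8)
The plan is to reduce the identity $F_2 \circ F_1 = F_3$ to a statement about the underlying functors on $\A$, $\B$, $\C$ by exploiting the adjunction $(Q_!, Q)$ and the localization sequences. First I would unwind the definitions: $F_2 \circ F_1 = Q f_2 Q_! Q f_1 Q_!$, whereas $F_3 = Q f_2 f_1 Q_!$. So the entire problem is to show that inserting the intermediate $Q_! Q$ in the middle — i.e.\ comparing $Q f_2 (Q_! Q) f_1 Q_!$ with $Q f_2 \, \mathrm{id}_{\B} \, f_1 Q_!$ — does not change the outcome. The standard tool here is the adjunction triangle (or counit/unit triangle) associated to the pair $(Q_!, Q)$: for any object $B \in \B$ there is a distinguished triangle relating $B$, $Q_! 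Q(B)$, and an object built from $\iota \iota^* B$. More precisely, from the exact sequence $0 \to \T_{\B} \overset{\iota}{\to} \B \overset{Q}{\to} \B/\T_{\B} \to 0$ together with the existence of $Q_!$ and $\iota^*$, one gets a functorial triangle of the shape
\[
\iota \iota^* (B) \to B \to Q_! Q (B) \to
\]
(up to a shift and up to swapping the roles of the two outer terms, depending on whether one uses the left or right adjoint; I would fix the precise form by chasing the adjunctions as in \cite[\S2]{Verdier76} and \cite[1.4.4]{BBD82}).

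Next I would apply the functor $f_2$ on the left and precompose with $f_1 Q_!$ on the right. Applying $Q f_2(-)$ to the triangle above, evaluated at $B = f_1 Q_! (A)$ for $A \in \A/\T_{\A}$, yields a distinguished triangle in $\C/\T_{\C}$
\[
Q f_2 \iota \iota^* f_1 Q_! (A) \to Q f_2 f_1 Q_! (A) \to Q f_2 Q_! Q f_1 Q_! (A) \to ,
\]
that is, a triangle $(\ast) \to F_3(A) \to F_2 F_1 (A) \to$ where $(\ast) = Q f_2 \iota \iota^* f_1 Q_!(A)$. Thus it suffices to show the term $(\ast)$ vanishes for all $A$, and then the map $F_3(A) \to F_2 F_1(A)$ is an isomorphism; checking naturality (so that this is an isomorphism of functors, not merely objectwise) is routine since every map in sight is a composition of adjunction units/counits, which are natural transformations. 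Now the two hypotheses of the lemma each kill $(\ast)$ directly: if $\iota^* f_1 Q_! = 0$ as a functor, then $(\ast) = Q f_2 \iota (0) = 0$; alternatively, if $Q \circ f_2 \circ \iota = 0$, then $(\ast) = (Q f_2 \iota)(\iota^* f_1 Q_! A) = 0$ since $Q f_2 \iota$ annihilates everything. Either way $(\ast) = 0$, so $F_3 \cong F_2 \circ F_1$.

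The main obstacle I anticipate is pinning down the precise form of the adjunction triangle $\iota \iota^* (B) \to B \to Q_! Q(B) \to$ and making sure the connecting maps are the canonical adjunction unit and counit, so that the triangle is functorial in $B$ and compatible with applying $f_1, f_2$; this is where one must be careful about which of $Q_!, Q_*$ and $\iota^*, \iota^!$ are being used, and about the relation $Q Q_! \cong \mathrm{id}$ (the counit of $(Q_!,Q)$ being an isomorphism since $Q$ is a localization) versus the non-invertibility of the unit $\mathrm{id} \to Q_! Q$. Once that triangle is in hand in the correct form, the rest is a formal diagram chase: applying the triangulated functor $Q f_2(-)$ and using that a map whose cofiber is zero is an isomorphism. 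A secondary, purely bookkeeping point is to confirm that $F_2 \circ F_1$ literally equals $Q f_2 Q_! Q f_1 Q_!$ on the nose from the definitions given, rather than up to a coherent isomorphism — which it does, since the $F_i$ are defined as honest composites of the named functors.
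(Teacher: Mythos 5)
Your proof is correct and follows essentially the same route as the paper: sandwich the Verdier triangle of functors relating $Q_!Q$, $\mathrm{Id}$ and $\iota\iota^*$ between $Q f_2$ and $f_1 Q_!$, and observe that the $\iota\iota^*$ term vanishes under either hypothesis. The only caveat, which you flagged yourself, is the orientation of that triangle --- with the left adjoints it reads $Q_!Q \to \mathrm{Id} \to \iota\iota^* \to$ (the form $\iota\iota^! \to \mathrm{Id} \to Q_*Q \to$ is the right-adjoint analogue) --- but since the term you kill is the one involving $\iota\iota^*$, the conclusion is unaffected.
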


\begin{proof}
From ~\cite[6.7]{Verdier76}, we have a distinguished triangle of functors
\[
Q_ ! Q\to  \mrm{Id}  \to \iota  \iota^* \to. 
\]
Thus 
we have a distinguished triangle of functors
\[
Q \circ f_2  (Q_! Q) f_1 Q_! \to Q\circ  f_2   (\mrm{Id}) f_1 Q_* \to Q\circ f_2 (\iota \iota^* ) f_1 Q_!\to  .
\]
But the third term is zero by the assumption. Lemma follows.
\end{proof}

Suppose that $\D$ is a fourth  triangulated category with a thick subcategory $\T_{\D}$ such that the localization functor $Q: \D \to \D/\T_{\D}$ admits a right adjoint $Q_*$ and a left adjoint $Q_!$.
Assume, moreover, that we have a pair of exact functors $g_1: \A \to \D,  g_2: \D \to \mathcal C$ such that $f_2 f_1\simeq g_2 g_1$.   We can form the following functors:
\[
G_1= Q\circ  g_1 \circ Q_! \quad \mbox{and} \quad  G_2 =Q \circ g_2 \circ  Q_!.
\]

\begin{lem}
\label{cartesian}
If  $Q\circ f_2\circ  \iota=0$ and $\iota^* g_1 Q_! =0$, then $F_2  F_1 \simeq G_2 G_1$.
\end{lem}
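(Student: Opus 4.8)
The plan is to deduce Lemma~\ref{cartesian} by invoking Lemma~\ref{localization-composition} twice, once for the composition $f_2 f_1$ through $\mathcal B$ and once for the composition $g_2 g_1$ through $\D$, and then to match the two outputs using the hypothesis $f_2 f_1 \simeq g_2 g_1$. First I would apply Lemma~\ref{localization-composition} to the triple $\A \overset{f_1}{\to} \mathcal B \overset{f_2}{\to} \mathcal C$: the hypothesis $Q\circ f_2 \circ \iota = 0$ is exactly (the second alternative of) the hypothesis of that lemma, so we conclude $F_2 \circ F_1 = F_3$, where $F_3 = Q \circ f_2 f_1 \circ Q_!$.

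Next I would apply Lemma~\ref{localization-composition} a second time, now to the triple $\A \overset{g_1}{\to} \D \overset{g_2}{\to} \mathcal C$, with $\T_{\D}$ playing the role of $\T_{\mathcal B}$ and the localization functor $Q: \D \to \D/\T_{\D}$ (with its adjoints $Q_!, Q_*$) playing the role of the middle localization. Here the relevant hypothesis is the first alternative, $\iota^* g_1 Q_! = 0$, which is assumed. Hence $G_2 \circ G_1 = G_3$, where $G_3 = Q \circ g_2 g_1 \circ Q_!$. Finally, since $f_2 f_1 \simeq g_2 g_1$ by assumption, the functors $F_3 = Q \circ f_2 f_1 \circ Q_!$ and $G_3 = Q \circ g_2 g_1 \circ Q_!$ are isomorphic, so $F_2 F_1 \simeq F_3 \simeq G_3 \simeq G_2 G_1$, which is the claim.

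The only point requiring a little care — and the step I expect to be the main (minor) obstacle — is checking that Lemma~\ref{localization-composition} genuinely applies in the second instance: one must confirm that the abstract setup of that lemma (three triangulated categories, thick subcategories, localizations with both adjoints, the distinguished triangle $Q_! Q \to \mathrm{Id} \to \iota\iota^* \to$ from~\cite[6.7]{Verdier76}) is symmetric enough that relabeling $\mathcal B \rightsquigarrow \D$, $f_1 \rightsquigarrow g_1$, $f_2 \rightsquigarrow g_2$ is legitimate. This is immediate because the proof of Lemma~\ref{localization-composition} used nothing about $\mathcal B$ beyond the stated structure, and the hypotheses on $\D$ and $\T_{\D}$ in the present statement supply precisely that structure. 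The matching of $F_3$ with $G_3$ is then a formal consequence of $f_2 f_1 \simeq g_2 g_1$, since $Q$ and $Q_!$ are fixed functors and composing isomorphic functors with fixed functors yields isomorphic functors.
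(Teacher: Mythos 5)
Your proposal is correct and is exactly the argument the paper intends: its proof of Lemma~\ref{cartesian} simply says it follows from Lemma~\ref{localization-composition}, and your two applications of that lemma (using $Q\circ f_2\circ\iota=0$ for the factorization through $\mathcal B$ and $\iota^* g_1 Q_!=0$ for the factorization through $\D$), combined with $f_2f_1\simeq g_2g_1$ to identify $F_3$ with $G_3$, are the intended details. The relabeling concern you raise is indeed harmless, since the setup for $\D$ supplies the same adjunction structure used in the proof of Lemma~\ref{localization-composition}.
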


This is because , we have 
$F_2 F_1 =  Q\circ f_2f_1 \circ Q_!  =  Q\circ g_2g_1 \circ Q_! = G_2 G_1$,
by  Lemma ~\ref{localization-composition}. 

Recall from ~\cite{B05} that a tensor triangulated category is a triangulated category equipped with a symmetric monoindal functor which is exact on each variable. 
A thick subcategory $\T$ of  a tensor triangulated category $\A$ is called a $thick$ $tensor$ $ideal$ if for any $T\in \T$ and $A\in \A$, we have $T\otimes A\in \T$. 
If $\T$ is a thick tensor ideal of $\A$, then the quotient categories
$\A/\T$ is again a tensor triangulated category. Moreover, the tensor structures are compatible with the localization functor in the sense that 
$Q(A\otimes A') \simeq  Q(A) \otimes Q(A')$ for any $A, A'\in \A$.

\begin{lem}
\label{distribution}
Assume that $\A$ and $\mathcal B$ are tensor triangulated categories and $\T_{\A}$ and $\T_{\mathcal B}$ are thick tensor ideals.
 Suppose that $f_1 : \A \to \mathcal B$ is an exact functor  such that $f_1(A\otimes A')=f_1 A\otimes f_1 A'$ and $Q\circ f_1\circ \iota=0$, then 
\[F_1(Q(A)\otimes Q(A'))\simeq F_1(A) \otimes F_1(A').\]
\end{lem}

\begin{proof}
We have $F_1(Q(A)\otimes Q(A')) \simeq F_1 Q  (A\otimes A') =Q\circ f_1 Q_! Q(A\otimes A')$.   Consider the distinguished triangle
\[
F_1 (Q(A)\otimes Q(A') \to Q \circ f_1 (A\otimes A') \to Q\circ f_1 \iota \iota^* (A\otimes A') \to.
\]
By assumption, the third term of the above distinguished triangle is zero. So   

$F_1 (Q(A)\otimes Q(A')) \simeq  Q \circ f_1 (A\otimes A')=F( Q A) \otimes F(QA')$.
\end{proof}

\begin{lem}
\label{reflective}
Assume that $\A$ and $\mathcal B$ are tensor triangulated categories and $\T_{\A}$ and $\T_{\mathcal B}$ are thick tensor ideals.
Assume, further,  that $Q_! (A\otimes Q(A'))\simeq Q_!(A)\otimes A'$ for any $A$, $A'$ in $\A$.
Suppose that  $h_1: \mathcal B \to \mathcal A$ is an exact functor.
Let $H_1 = Q\circ  h_1 \circ  Q_!$. 
If $Q\circ  h_1 \iota =0$, $Q_! \circ F_1 = f_1 \circ Q_!$,  and $f_1(A\otimes h_1(B)) \simeq f_1 (A) \otimes B$, for $A\in \A$ and $B\in \mathcal B$, then 
\[
F_1 (Q(A) \otimes H_1(Q(B))) \simeq F_1 (Q(A)) \otimes Q(B).
\]
\end{lem}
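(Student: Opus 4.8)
The plan is to imitate the proof of Lemma \ref{distribution}, using the distinguished triangle $Q_!Q\to \mrm{Id}\to \iota\iota^*\to$ from \cite[6.7]{Verdier76} to reduce an expression on the quotient category to one on $\A$, where the untwisted projection formula $f_1(A\otimes h_1(B))\simeq f_1(A)\otimes B$ is available. First I would unwind the left-hand side: since $F_1 = Q\circ f_1\circ Q_!$ and $H_1 = Q\circ h_1\circ Q_!$, and using Lemma \ref{general-distribution}-type compatibility of $Q$ with $\otimes$ together with the hypothesis $Q_!(A\otimes Q(A'))\simeq Q_!(A)\otimes A'$, I would write
\[
F_1(Q(A))\otimes H_1(Q(B)) = Q(f_1 Q_! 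Q(A))\otimes Q(h_1 Q_! Q(B)) \simeq Q\bigl(f_1 Q_! Q(A)\otimes h_1 Q_! Q(B)\bigr),
\]
and then apply $F_1 = Q\circ f_1\circ Q_!$ to $Q(A)\otimes H_1(Q(B))$, using the reflectivity hypothesis $Q_!(A\otimes Q(A'))\simeq Q_!(A)\otimes A'$ with $A'$ replaced by $H_1(Q(B))$ to pull the $Q_!$ inside.

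Next I would invoke the distinguished triangle $Q_!Q(A)\to A\to \iota\iota^*(A)\to$ and apply the functor $(-)\otimes h_1(Q_!Q(B))$ followed by $f_1$; since $f_1$ is triangulated and commutes with $\otimes$ by hypothesis, this produces a distinguished triangle whose third term involves $f_1(\iota\iota^*(A)\otimes h_1(\cdots))$. Applying $Q$ and using the hypotheses $Q\circ h_1\circ\iota = 0$ (to kill the contribution coming from resolving $h_1 Q_! Q(B)$ against $\iota$) and the compatibility $Q_!\circ F_1 = f_1\circ Q_!$ (so that $f_1 Q_! Q(B) \simeq$ something in the image of $Q_!$, hence $h_1$ of it is handled by $Q\circ h_1\circ Q_! = H_1\circ Q$), the offending terms become zero in the quotient. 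What remains is $Q\circ f_1\bigl(A\otimes h_1(Q_! Q(B))\bigr)$, to which I apply the untwisted projection formula $f_1(A\otimes h_1(B'))\simeq f_1(A)\otimes B'$ with $B' = Q_!Q(B)$, yielding $Q\bigl(f_1(A)\otimes Q_!Q(B)\bigr) \simeq Q(f_1(A))\otimes Q(Q_!Q(B))$. Finally $Q(Q_!Q(B))\simeq Q(B)$ since $Q\circ Q_!\simeq\mrm{Id}$ for a left adjoint of a localization, and $Q(f_1(A))\simeq Q(f_1(Q_!Q(A)))\simeq F_1(Q(A))$ by the same token (using $Q\circ\iota = 0$ on the error term), giving the desired isomorphism $F_1(Q(A))\otimes Q(B)$.

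The main obstacle I anticipate is bookkeeping the several error terms of the form $\iota\iota^*(-)$ that appear when one replaces $Q_!Q$ by $\mrm{Id}$, and verifying that each is annihilated by the correct combination of the vanishing hypotheses $Q\circ h_1\circ\iota = 0$, $\iota^* g_1 Q_! = 0$ (wait, that hypothesis is not available here), and the intertwining $Q_!\circ F_1 = f_1\circ Q_!$. In particular one must be careful that the hypothesis $Q_! F_1 = f_1 Q_!$ is exactly what lets us identify $h_1$ applied to $f_1 Q_! Q(B)$ with the image under $Q_!$ of $H_1 Q(B)$, so that $Q$ of it recovers $H_1(Q(B))$ rather than some uncontrolled object; I would double-check that this is consistent and that no additional hypothesis is secretly needed. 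The rest — commuting $Q$ past $\otimes$, using $Q\circ Q_!\simeq\mrm{Id}$, and the triangulated-functor manipulations — is routine and parallels Lemmas \ref{distribution} and \ref{localization-composition} closely.
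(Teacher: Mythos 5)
Your overall strategy (unwind with $F_1=Q f_1 Q_!$, $H_1=Q h_1 Q_!$, use the reflectivity hypothesis to pull $Q_!$ inside the tensor product, then the untwisted projection formula, then $Q\otimes$-compatibility and $QQ_!\simeq \mathrm{Id}$) is the right one and is essentially the paper's chain of isomorphisms. But the middle of your argument has a genuine gap: the step where you resolve $Q_!Q(A)\to A\to \iota\iota^*(A)$ and claim the resulting error term dies is not justified by the stated hypotheses. After tensoring with $h_1(Q_!Q(B))$ and applying $Q\circ f_1$, the offending term is $Q f_1\bigl(\iota\iota^*(A)\otimes h_1(Q_!Q(B))\bigr)\simeq Q f_1\iota\iota^*(A)\otimes Q Q_! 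Q(B)$, whose vanishing requires $Q\circ f_1\circ\iota=0$ --- that is the hypothesis of Lemma~\ref{distribution}, and it is \emph{not} assumed in Lemma~\ref{reflective}. Neither $Q\circ\iota=0$ nor $Q\circ h_1\circ\iota=0$ applies, since $f_1$ sits between $Q$ and $\iota$. The same problem recurs at the end when you assert $Q(f_1(A))\simeq Q(f_1(Q_!Q(A)))$ ``using $Q\circ\iota=0$'': again you would need $Qf_1\iota=0$. Your stated uses of the two hypotheses $Q h_1\iota=0$ and $Q_!F_1=f_1Q_!$ also do not type-check as described (e.g.\ ``$A'$ replaced by $H_1(Q(B))$'' puts a quotient-category object in a slot that must lie in $\A$, and ``$f_1Q_!Q(B)$'' is not a meaningful composite here); in the paper these hypotheses play different roles: $Qh_1\iota=0$ gives $H_1\circ Q\simeq Q\circ h_1$ at the very start, so $H_1(Q(B))\simeq Q(h_1(B))$, and $Q_!F_1=f_1Q_!$ is used at the very end to rewrite $f_1Q_!Q(A)$ as $Q_!F_1Q(A)$ before splitting off $Q(B)$.

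The good news is that the detour through $A$ is unnecessary, so the gap is easily repaired: do not replace $Q_!Q(A)$ by $A$ at all, since $F_1(Q(A))=Qf_1Q_!Q(A)$ is exactly the object you want in the end. Concretely, from $F_1(Q(A)\otimes H_1(Q(B)))=Qf_1Q_!\bigl(Q(A)\otimes Q(h_1Q_!Q(B))\bigr)$ the reflectivity hypothesis (with $A'=h_1Q_!Q(B)\in\A$) gives $Qf_1\bigl(Q_!Q(A)\otimes h_1Q_!Q(B)\bigr)$; the projection formula with $B$ replaced by $Q_!Q(B)$ gives $Q\bigl(f_1Q_!Q(A)\otimes Q_!Q(B)\bigr)$; and the $Q\otimes$-compatibility together with $QQ_!\simeq\mathrm{Id}$ yields $F_1(Q(A))\otimes Q(B)$. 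This is parallel to the paper's proof (which instead first simplifies $H_1(Q(B))$ to $Q(h_1(B))$ via $Qh_1\iota=0$ and finishes via $Q_!F_1=f_1Q_!$), and it avoids any distinguished-triangle bookkeeping in the $\A$-variable, which is precisely where your version needed an unavailable vanishing.
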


\begin{proof}
By assumption, $H_1 \circ Q \simeq Q\circ h_1$. So 
\begin{equation*}
\begin{split}
F_1(Q(A)&\otimes H_1(Q(B))) 
= F_1(Q(A)\otimes Q\circ h_1(B)) \simeq Q\circ f_1 \circ Q_! (Q(A) \otimes Q\circ h_1(B))\\
&= Q\circ f_1 (Q_! Q (A) \otimes h_1(B)) = Q\circ f_1 (Q_! Q (A) \otimes h_1(B) \\
&= Q ( f_1 Q_! Q(A) \otimes h_1(B) =Q( Q_! \circ F_1  Q(A) \otimes B) = F_1(Q(A))\otimes Q(B).
\end{split}
\end{equation*}
The lemma follows.
\end{proof}

\section{Convolution product }

\subsection{Framed representation variety}
\label{framed}
Recall from section ~\ref{Cartan} that  $\Gamma=(I, H, ', '', \bar\empty \; )$ is  a graph  attached to a Cartan datum ($I, \cdot$ ).  
Fix an orientation  $\Omega$ of $\Gamma$, i.e., $\Omega$ is a subset of $H$ such that $\Omega\sqcup \bar \Omega =H$. 
We call the pair $(\Gamma, \Omega)$ a $quiver$. 
To an $I$-graded vector space $V=\oplus_{i\in I} V_i$ over $k$, we set
\[
G_V=\prod_{i\in I}  \mrm{GL}(V_i),
\]
the product of the general linear groups $\mrm{GL}(V_i)$. 
To a pair ($D$, $V$) of finite dimensional  $I$-graded vector spaces over the field $k$, attached the $framed$ $representation$ $variety$  of the quiver $(\Gamma, \Omega)$:
\[
\mbf E_{\Omega}(D, V) = \oplus_{h\in \Omega} \mrm{Hom} ( V_{h'} , V_{h''}) \oplus \oplus_{i\in I} \mrm{Hom} (V_i, D_i).
\] 
Elements in $\mbf E_{\Omega}(D, V)$ will be denoted by $X=(x,q)$ where $x$ (resp. $q$) is in the first (resp. second)  component.
The group $G_D\times G_V$ acts on $\mbf E_{\Omega}(D, V)$ by conjugation:
\[
(f, g) .(x, q)  =(x', q'), \quad \mbox{where} \; x'_{h} = g_{h''} x_h g_{h'}^{-1}, \;  q'_i= f_i q_i g_i^{-1},  \quad  \forall h\in \Omega, i\in I,
\]
for any $(f, g)\in G_D\times G_V$,   and $(x, q) \in \mbf E_{\Omega}(D, V)$.
To each $i\in I$, we set
\[
X(i)=q_i+\sum_{h\in \Omega: h'=i} x_h : V_i \to D_i\oplus \bigoplus_{h\in \Omega: h'=i} V_{h''} .
\]
To a triple $(D, V, V')$ of $I$-graded vector spaces, we set
\[
 \mbf E_{\Omega}(D, V, V') = \mbf E_{\Omega}(D,V) \oplus  \mbf E_{\Omega}(D, V').
\]
We write $\mbf E_{\Omega}$ for $\mbf E_{\Omega} (D, V, V')$ if it causes no ambiguity. 
The group 
\[
\G=G_D\times G_V \times G_{V'}\]
acts on $\mbf E_{\Omega}$ 
by
$(f, g, g'). (X, X')=( (f, g). X, (f, g') . X')$, for any $ (f, g, g')\in \G, (X, X') \in \mbf E_{\Omega}$.

Similarly, to a quadruple $(D, V, V', V'')$ of $I$-graded vector spaces, we set
\[
\mbf E_{\Omega}(D, V, V', V'')= \mbf E_{\Omega}(D, V) 
\oplus  \mbf E_{\Omega}(D, V') \oplus \mbf E_{\Omega}(D, V'').
\]
The group $\mbf H=G_D\times G_V \times G_{V'}\times G_{V''}$ acts on 
$\mbf E_{\Omega}(D, V, V', V'')$ by 
\[
(f, g, g', g''). (X, X', X'')=( (f, g). X, (f, g') . X', (f, g'').X''), 
\]
for any 
$(f, g, g', g'')\in \mbf H, (X, X', X'') \in \mbf E_{\Omega}(D, V, V', V'')$.

\subsection{Fourier-Deligne transform}
\label{Fourier}
 
Let $\Omega'$ be another orientation of the graph $\Gamma$. 
The various varieties defined in ~\ref{framed} can be defined with respect to $\Omega'$ and $\Omega\cup \Omega'$. 
Define a pairing $u_s: \mbf E_{\Omega\cup \Omega'}(D, V^s)\to k$ by 
$u_s(X^s) =\sum_{h\in \Omega\backslash \Omega'} \mbox{tr}(x^s_hx^s_{\bar h})$ for any $X^i\in \mbf E_{\Omega\cup\Omega'}(D, V^s)$
where $\mbox{tr}(-)$ is the trace of the endomorphism in the parenthesis.
Fix  a non-trivial character $\chi$ from the field $ \mbb F_p$ of $p$ elements to  
$ \Q_l^*:=\Q_l\backslash \{0\}$.
Denote by $\mathcal L_{\chi}$  the local system on $k$ corresponding to $\chi$.
Let 
\begin{equation}
\label{L}
\mathcal L_s =u_s^* \mathcal L_{\chi}.
\end{equation}
Let $u_{st}: \mbf E_{\Omega \cup \Omega'}(D, V^i, V^j)\to k$  be the pairing defined by $u_{st} (X^s, X^t) =-u_s(X^s) + u_t(X^t)$ 
for any $(X^s, X^t)\in \mbf E_{\Omega \cup \Omega'}(D, V^s, V^t)$. 
Via this pairing, we may regard $\mbf E_{\Omega'}$ as the dual bundle of the vector bundle $\mbf E_{\Omega}$ over $\mbf E_{\Omega \cap \Omega'}$.
Note that this pairing is $\G$-invariant, i.e., $u_{st}( g. (X^s, X^t))= u_{st}  (X^s, X^t)$, for any  $g\in \G$.
We set
\begin{equation}
\label{L2}
\mathcal L_{st}=u_{st}^* \mathcal L_{\chi}.
\end{equation}
Consider the diagram 
\[
\begin{CD}
\mbf E_{\Omega}(D, V^s, V^t) @<m_{st} << \mbf E_{\Omega\cup \Omega'} (D, V^s, V^t) @>m_{st}'>> \mbf E_{\Omega'}(D, V^s, V^t),
\end{CD}
\]
where the morphisms are obvious projections.
It is clear that $m_{st}$ and $m'_{st}$ are $\G$-equivariant morphisms.
The $Fourier$-$Deligne$ $trans\! f\!orm$ for the vector bundle $\mbf E_{\Omega} $ over $\mbf E_{\Omega\cap \Omega'}$, associated with the character $\chi$, is the triangulated functor
\begin{equation}
\label{Fourier-functor}
\Phi_{\Omega}^{\Omega'}: \D^b_{\G}(\mbf E_{\Omega}(D, V^s, V^t) )\to \D^b_{\G}(\mbf E_{\Omega'}(D, V^s, V^t))
\end{equation}
is defined to be $\Phi_{\Omega}^{\Omega'}(K) =m'_{st!} (m_{st}^*(K) \otimes\mathcal L_{st})[r_{st}]$, where $r_{st}$ is the rank of the vector bundle 
$\mbf E_{\Omega}\to \mbf E_{\Omega\cap \Omega'}$. Note that $r_{st}=\sum_{h\in \Omega\backslash \Omega'} \dim V^s_{h'} \dim V^s_{h''} + \dim V^t_{h'} \dim V^t_{h''}$.

Let $a$ be the map of multiplication by $-1$ along the fiber of the vector bundle $\mbf E_{\Omega} $ over $\mbf E_{\Omega\cap \Omega'}$ . 

\begin{thm}
The transform $\Phi_{\Omega}^{\Omega'}$ is an equivalence of triangulated categories.  Moreover, 
$\Phi_{\Omega'}^{\Omega} \Phi_{\Omega}^{\Omega'} \simeq a^*$.
\end{thm}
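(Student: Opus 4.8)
The plan is to establish the theorem as a direct equivariant analogue of Lusztig's computation for the Fourier--Deligne transform on a vector bundle and its dual (as in the quiver-variety setting of \cite{Lusztig91}). First I would reduce the equivariant statement to a non-equivariant one level by level along an acyclic sequence of smooth resolutions. Using Proposition~\ref{Geometric-Grothendieck} and the description $\D^b_{\G}(\mbf E_{\Omega})\simeq \underleftarrow{\lim}\;\D^b(\bar E_n| e_n)$, and the fact that $\pi$, $\pi'$, $a$ are all $\G$-equivariant morphisms over $\mbf E_{\Omega\cap\Omega'}$, the functors $\pi_!$, ${\pi'}^*$, $a^*$ and the twist by $\mathcal L$ are all induced from their counterparts on the resolutions via the general direct/inverse image functors of \S\ref{generaldirectimage}. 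Hence it suffices to prove the two claims --- that $\Phi_{\Omega}^{\Omega'}$ is an equivalence, and that $\Phi_{\Omega'}^{\Omega}\Phi_{\Omega}^{\Omega'}\simeq a^*$ --- for an ordinary vector bundle over a base together with its dual bundle, equipped with the pairing $T$; then transport the conclusion through the inverse limit, checking compatibility with the transition maps $\bar f_n^*$ via Lemma~\ref{general-composition} and Lemma~\ref{general-basechange}.

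Second, for the non-equivariant statement I would follow the standard proof. Set up the double bundle picture: over $\mbf E_{\Omega\cap\Omega'}$ we have the bundle $\mbf E_{\Omega}$, its dual $\mbf E_{\Omega'}$, and the ``total'' space $\mbf E_{\Omega\cup\Omega'}$ which maps to both. The composite $\Phi_{\Omega'}^{\Omega}\Phi_{\Omega}^{\Omega'}$ is, after unwinding definitions and applying base change (Lemma~\ref{general-basechange}) and the projection formula (Lemma~\ref{general-projection}), an integral transform on $\mbf E_{\Omega}\times_{\mbf E_{\Omega\cap\Omega'}} \mbf E_{\Omega}$ whose kernel is $p_!$ of a rank-one local system pulled back from $k$ via the function $(X,Y)\mapsto T(X,Y)-T(X', \text{something})$ --- concretely the pairing of the difference of the two fibre vectors against the dual variable, integrated over the dual fibre. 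The key computational input is the Artin--Schreier/Deligne vanishing: $\int_{V^*}\mathcal L_{\chi}(\langle v,\xi\rangle)\,d\xi$ is, up to the Tate twist/shift encoded in $[r]$, the delta sheaf at $v=0$ (equivalently $a_!\bar{\mbb Q}_{l}$ after accounting for the sign $-1$ coming from the asymmetry of $T$ in $\Omega\backslash\Omega'$ versus $\Omega'\backslash\Omega$). This yields $\Phi_{\Omega'}^{\Omega}\Phi_{\Omega}^{\Omega'}\simeq a^*$ on the nose after matching the shifts, since the rank $r$ of $\mbf E_{\Omega}\to\mbf E_{\Omega\cap\Omega'}$ equals that of $\mbf E_{\Omega'}\to\mbf E_{\Omega\cap\Omega'}$. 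That $\Phi_{\Omega}^{\Omega'}$ is an equivalence is then formal: $a^*$ is an equivalence (indeed $a^*a^*\simeq\mrm{id}$), so $\Phi_{\Omega'}^{\Omega}$ is a quasi-inverse to $\Phi_{\Omega}^{\Omega'}$ up to the involution $a^*$, whence both are equivalences of triangulated categories.

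The point needing care --- and what I expect to be the main obstacle --- is the bookkeeping of signs, shifts and Tate twists so that the isomorphism comes out as exactly $a^*$ rather than $a^*$ composed with a shift or twist, and the verification that everything is genuinely $\G$-equivariant, i.e.\ that the local system $\mathcal L=T^*\mathcal L_{\chi}$ is $\G$-equivariant (which uses $\G$-invariance of $T$, already noted) and that the base-change and projection-formula isomorphisms used in the kernel computation are the equivariant ones supplied by Lemmas~\ref{general-distribution}--\ref{general-basechange}. A subtlety is that the pairing $T$ is antisymmetric in the roles of $\Omega\backslash\Omega'$ and $\Omega'\backslash\Omega$ (there is a sign in its definition), so the two Fourier kernels for $\Phi_{\Omega}^{\Omega'}$ and $\Phi_{\Omega'}^{\Omega}$ pair $v$ against $\xi$ with opposite sign, and it is precisely this that produces the multiplication-by-$(-1)$ map $a$ in the answer; one must track that sign honestly rather than assuming the two transforms are literally inverse. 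Once the non-equivariant identity with correct normalization is in hand, passing to the equivariant setting and then to the inverse limit is routine given the formalism of \S\ref{generaldirectimage}.
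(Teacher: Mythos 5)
The paper itself offers no proof of this theorem: it is the standard inversion property of the Fourier--Deligne transform for a vector bundle and its dual, quoted from the literature (it is exactly the setting of ~\cite[10.2]{Lusztig93}, going back to Deligne--Laumon), so your task was really to reconstruct that standard argument, and in outline you do: reduce the equivariant statement to the non-equivariant one through the limit of resolutions (equivariance of $\mathcal L$ coming from $\G$-invariance of $T$), compute the composite as a kernel transform via base change and the projection formula, and invoke the orthogonality $H^*_c(\mbb A^1,\mathcal L_{\chi})=0$ for nontrivial $\chi$ to collapse the kernel to a shifted (and, over the algebraically closed field $k$, trivially Tate-twisted) delta sheaf. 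That skeleton is correct and is the intended proof.

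Two points in your write-up need correction, though neither is fatal. First, your explanation of where $a$ comes from is backwards. The minus sign in the definition of $T$ separates the $V$-summand from the $V'$-summand of the double framed variety; since $\mrm{tr}(x_hx_{\bar h})=\mrm{tr}(x_{\bar h}x_h)$, the pairing between the $\Omega\backslash\Omega'$-components and the $\Omega'\backslash\Omega$-components is symmetric, not antisymmetric, and both $\Phi_{\Omega}^{\Omega'}$ and $\Phi_{\Omega'}^{\Omega}$ are built from the \emph{same} kernel $\mathcal L=T^*\mathcal L_{\chi}$. It is precisely this reuse of the same additive character that makes the composite kernel $\int_{\xi}\chi(\langle v+w,\xi\rangle)$ a delta along $v+w=0$, hence $a^*$; if the two transforms paired with opposite signs, as you assert, the composite would instead be the identity (that is Fourier inversion). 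Second, the final deduction ``$\Phi_{\Omega'}^{\Omega}\Phi_{\Omega}^{\Omega'}\simeq a^*$, hence both are equivalences'' is not formal from the one-sided composite alone: you also need $\Phi_{\Omega}^{\Omega'}\Phi_{\Omega'}^{\Omega}\simeq a^*$ on $\mbf E_{\Omega'}$, which follows from the identical computation with the roles of $\Omega$ and $\Omega'$ exchanged; only with both composites in hand do you get full faithfulness and essential surjectivity on both sides, and hence the equivalence with quasi-inverse $a^*\circ\Phi_{\Omega'}^{\Omega}$.
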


\subsection{Localization}
\label{general-localization}
To each $i\in I$, we fix an orientation $\Omega_i$ of the graph $\Gamma$ such that $i$ is a $source$, 
i.e., all arrows $h$ incident to $i$ having $h'=i$.
Let $F_i$ be the closed subvariety of $\mbf E_{\Omega_i}$ consisting of all elements $(X, X')$ such that either
$X(i)$ or $X'(i)$ is not injective.  Let $U_i$ be its complement. Thus we have a decomposition
\begin{align}
\label{partition}
\gamma_i: F_i \hookrightarrow \mbf E_{\Omega_i} \hookleftarrow U_i \;:\beta_i.
\end{align}
Notice that $F_i$ and $U_i$ are $\G$-invariant.
Following Zheng ~\cite{Zheng08}, let $\mathcal  N_i$ be the thick subcategory  of $\D^b_{\G}(\mbf E_{\Omega})$
generated by the objects
$K\in \D^b_{\G}(\mbf E_{\Omega})$ such that  the support of the complex
$\Phi_{\Omega}^{\Omega_i} (K) $ is contained in the subvariety $F_i$. 
Let $\mathcal N$ be the thick subcategory of  $\D^b_{\G}(\mbf E_{\Omega})$ generated by $\mathcal N_i$ for all $i\in I$.
We define 
\[
 \mathscr D^b_{\G} (\mbf E_{\Omega}) = \D^b_{\G}(\mbf E_{\Omega})/ \mathcal N
\]
to be the localization of $\D^b_{\G} (\mbf E_{\Omega})$ with respect to the thick subcategory $\mathcal N$ (\cite{Verdier76}, ~\cite{KS90}).
Let 
\[
Q: \D^b_{\G}(\mbf E_{\Omega}) \to  \mathscr D^b_{\G}(\mbf E_{\Omega}) 
\]
denote the localization functor. 

If $\dim V_i^a > \dim D_i + \sum_{h\in \Omega: h'=i} \dim V^a_{h''}+ \sum_{h\in \Omega: h''=i} \dim V^a_{h'}$ for $a=1$ or $2$ and some $i\in I$, we have
$\mathcal N_i =  \D^b_{\G}(\mbf E_{\Omega})$. In this case, we have $\mathscr D^b_{\G}(\mbf E_{\Omega}) =0$. 

Recall that 
$\Supp (K\otimes L ) = \Supp(K) \cap \Supp(L)$ for any complexes $K$ and $L$. 
From these properties and the fact that $\otimes$ is exact on each variable,  
we have that $\mathcal N$ is a thick tensor ideal of $\D^b_{\G}(\mbf E_{\Omega})$ if $\D^b_{\G}(\mbf E_{\Omega})$ is equipped with the derived tensor functor $\otimes$.
We see that $ \mathscr D^b_{\G} (\mbf E_{\Omega}) $ is a tensor triangulated category with the tensor structure inherited from that of $\D^b_{\G}(\mbf E_{\Omega})$.

\begin{lem}
\label{adjoint}
The localization functor $Q$ admits a right adjoint $Q_*$ and a left adjoint $Q_!$. Moreover, $Q_*$ and $Q_!$ are fully faithful.
\end{lem}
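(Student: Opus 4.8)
The plan is to produce the adjoints $Q_!$ and $Q_*$ by invoking general localization theory for triangulated categories, once we check that the thick subcategory $\mathcal N$ is nicely behaved. The key input is the following standard fact (see ~\cite{Verdier76}, ~\cite{KS90} and also the Bousfield localization formalism): if $\mathcal D$ is a triangulated category and $\mathcal N$ is a thick subcategory, then the quotient functor $Q:\mathcal D\to \mathcal D/\mathcal N$ admits a right adjoint if and only if the inclusion $\iota:\mathcal N\hookrightarrow \mathcal D$ admits a right adjoint $\iota^!$, and in that case $Q_*$ is automatically fully faithful; dually, $Q$ admits a left adjoint $Q_!$ (automatically fully faithful) if and only if $\iota$ admits a left adjoint $\iota^*$. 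So the whole statement reduces to constructing the adjoints $\iota^*$ and $\iota^!$ of the inclusion of $\mathcal N$.

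\medskip

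First I would exhibit $\mathcal N$ as a category of the form ``objects supported on a closed subvariety'' after applying the Fourier--Deligne transforms, so that the adjoints can be built from the usual recollement $(j_!,j^*,j_*)$, $(i^*,i_*,i^!)$ attached to a closed/open decomposition of a $\G$-variety. Concretely, for a single $i\in I$, the transform $\Phi_{\Omega}^{\Omega_i}$ is an equivalence (by the theorem in Section ~\ref{Fourier}), so $\mathcal N_i$ corresponds under it to the thick subcategory of $\D^b_\G(\mbf E_{\Omega_i})$ of objects with support contained in the closed $\G$-invariant subvariety $F_i$; using the decomposition $\gamma_i:F_i\hookrightarrow \mbf E_{\Omega_i}\hookleftarrow U_i:\beta_i$ of (\ref{partition}) this is exactly the essential image of $\gamma_{i*}=\gamma_{i!}$, which is a reflective and coreflective thick subcategory with $\iota^!=\gamma_{i*}\gamma_i^!$ and $\iota^*=\gamma_{i*}\gamma_i^*$. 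Transporting back via $\Phi_{\Omega_i}^{\Omega}$ gives adjoints to the inclusion of each $\mathcal N_i$.

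\medskip

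Next I would pass from the individual $\mathcal N_i$ to the thick subcategory $\mathcal N$ they generate. The cleanest route is to localize one index at a time: form $\mathscr D^{(1)}=\D^b_\G(\mbf E_\Omega)/\mathcal N_{i_1}$, observe that $\mathcal N_{i_2}$ maps to a thick subcategory of $\mathscr D^{(1)}$ whose inclusion again has both adjoints (the adjoints descend along a quotient functor that itself has both adjoints, by composing the relevant adjunctions and using that $Q_{i_1}$ is a localization), and iterate over $I$. Since $I$ is finite, after $|I|$ steps one obtains $\mathscr D^b=\D^b_\G(\mbf E_\Omega)/\mathcal N$ together with a left and a right adjoint to the total quotient functor; the composite adjoints are again fully faithful because a composite of fully faithful adjoints of quotient functors is one. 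Alternatively, one can argue in a single stroke: $\mathcal N$ is generated by a \emph{set} of objects (the simple perverse constituents whose Fourier transforms are supported on some $F_i$), and $\D^b_\G(\mbf E_\Omega)$ is, via Remark ~\ref{geometric-identification}, a derived category of sheaves on a quotient stack, hence compactly generated and well-generated; for such categories every localizing (resp. colocalizing) subcategory generated by a set is reflective and coreflective, which yields $\iota^*$ and $\iota^!$ directly, and then $Q_!$, $Q_*$, and their full faithfulness follow formally.

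\medskip

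The main obstacle I anticipate is the bookkeeping around ``thick subcategory generated by'' versus ``localizing/colocalizing subcategory generated by'': $\mathcal N$ is defined as a thick (Verdier) subcategory, so to apply Bousfield-type localization one must either enlarge to the localizing subcategory and check it meets $\D^b_\G$ in exactly $\mathcal N$ (a boundedness/finiteness point), or carry out the one-index-at-a-time argument entirely inside the bounded categories, where the recollement for $(F_i,U_i)$ is available but one must verify that at each stage the relevant subcategory is still of ``closed-support'' type after the previous Fourier transforms and localizations have been applied — i.e., that the transforms and the quotient functors interact compatibly enough (as in Lemmas ~\ref{localization-composition}--\ref{cartesian}) for the inductive step to go through. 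Everything else is a formal consequence of Verdier's and Bernstein--Lunts'/Bousfield's results.
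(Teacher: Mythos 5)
Your reduction to adjoints of the inclusion of $\mathcal N$ (via Verdier's formalism) and your treatment of a single vertex are fine and agree with the paper: under $\Phi_{\Omega}^{\Omega_i}$ the subcategory $\mathcal N_i$ becomes the complexes supported on the closed subvariety $F_i$, and the open--closed recollement for $\gamma_i, \beta_i$ gives each single-vertex localization $Q_i$ fully faithful left and right adjoints (the paper gets this by identifying $Q_i$ with $\beta_i^*$, citing ~\cite[1.4]{BBD82} and ~\cite[Thm. 3.4.3]{BL94}). The genuine gap is the passage from the individual $\mathcal N_i$ to the thick subcategory $\mathcal N$ they generate. Your inductive step rests on the parenthetical claim that ``the adjoints descend'': that since $Q_{i_1}$ has both adjoints and the inclusion of $\mathcal N_{i_2}$ has both adjoints, the inclusion of the thick image of $\mathcal N_{i_2}$ in $\D^b_{\G}(\mbf E_{\Omega})/\mathcal N_{i_1}$ again has both adjoints. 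That is not a formal consequence of the hypotheses, and composing the separate adjunctions does not produce it: to build, say, a right adjoint of $\hat Q_i: \D^b_{\G}(\mbf E_{\Omega})/\mathcal N_i \to \D^b_{\G}(\mbf E_{\Omega})/\mathcal N$ one needs the functor $Q_i Q_{j*} Q_j$ to annihilate all of $\mathcal N$; applying $Q_i$ to the triangle $\iota_j\iota_j^! K \to K \to Q_{j*}Q_j K \to$ for $K\in\mathcal N_i$ shows this amounts to the compatibility $\iota_j\iota_j^!(\mathcal N_i)\subseteq \mathcal N_i$, i.e.\ that the localization/colocalization at the vertex $j$ preserves the Fourier-support condition defining $\mathcal N_i$. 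This is exactly the geometric input the paper supplies in its two-vertex argument (a support computation showing $Q_iQ_{j*}Q_j(K)=0$ for $K\in\mathcal N$, after which $Q_iQ_{j*}Q_j$ factors through $Q$, yielding $\hat Q_{i*}$, and adjointness and full faithfulness are checked), and it is precisely the point you defer to your final paragraph as ``the main obstacle'' without resolving it. Since the single-vertex case is essentially a citation, this unproved compatibility is the entire mathematical content of the lemma.

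Your alternative ``single stroke'' route does not close the gap either. The category $\D^b_{\G}(\mbf E_{\Omega})$ of bounded constructible equivariant complexes has no infinite coproducts, so it is neither compactly nor well generated; one would have to pass to an ambient unbounded category, where set-generated localizing subcategories do give Bousfield localizations (right adjoints) but the colocalizations (the left adjoints $Q_!$, $\iota^*$, which the paper needs just as much) are not automatic, and one would still have to show that the resulting functors preserve boundedness and constructibility and that the big localizing subcategory meets $\D^b_{\G}(\mbf E_{\Omega})$ in exactly $\mathcal N$ --- the ``boundedness/finiteness point'' you acknowledge but leave open. As written, therefore, the proposal proves the lemma only for a single vertex; to complete it you should prove the vertex-by-vertex compatibility statement above (or reproduce the paper's support argument for $Q_iQ_{j*}Q_j$), rather than treat it as formal bookkeeping.
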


\begin{proof}
Let $Q_i:  \D^b_{\G}(\mbf E_{\Omega}) \to  \D^b_{\G}(\mbf E_{\Omega})/\mathcal N_i$ be the localization functor with respect to the thick subcategory $\mathcal N_i$.   
It is well-known, for example, \cite[1.4]{BBD82},  ~\cite[Theorem 3.4.3]{BL94}, 
that 
the functor 
$\beta_i^*: \D^b_{\G}(\mbf E_{\Omega_i}) \to \D^b_{\G}(U_i)$  admits a  fully faithful right adjoint $\beta_{i*}$ and a fully faithful  left adjoint $\beta_{i!}$. 
Now that the functor $Q_i$ can be identified with the functor $\beta_i^*$ via the transform $\Phi_{\Omega}^{\Omega_i}$, it then admits 
a fully faithful right adjoint  $Q_{i*}$ and a fully faithful left adjoint $Q_{i!}$.

For simplicity, let us assume that the graph $\Gamma$ consists of only two vertices $i$ and $j$.
By the universal property of the localization functor $Q_i$,   the functor $Q$ factors through $Q_i$, i.e., there exists a functor
$\hat Q_i: \D^b_{\G}(\mbf E_{\Omega}) /\mathcal N_i \to \D^b_{\G}(\mbf E_{\Omega})/ \mathcal N$ such that 
$Q=\hat Q_i \circ Q_i$. 
Similarly, we have $Q=\hat Q_j \circ Q_j$ for some $\hat Q_j:  \D^b_{\G}(\mbf E_{\Omega}) /\mathcal N_j \to \D^b_{\G}(\mbf E_{\Omega})/ \mathcal N$.
Let us consider the following diagram
\[
\begin{CD}
\D^b_{\G}(\mbf E_{\Omega})  @>Q_j>>  \D^b_{\G}(\mbf E_{\Omega}) /\mathcal N_j @>\hat Q_j>> \D^b_{\G}(\mbf E_{\Omega}) /\mathcal N\\
@. @VQ_i Q_{j*} VV\\
\empty @. \D^b_{\G}(\mbf E_{\Omega}) /\mathcal N_i.
\end{CD}
\]
Given any $K\in \mathcal N$, we have  $Q_i Q_{j*} Q_j(K)=0$. Indeed, if $K\in \mathcal N_j$, then $Q_j(K) =0$. So $Q_i Q_{j*} Q_j(K)=0$.
If $K\in \mathcal N_i$, we have $\mbox{supp} ( \Phi_{\Omega}^{\Omega_i}  Q_{j*} Q_j(K)) = \mbox{supp}(\Phi_{\Omega}^{\Omega_i} K) \subseteq F_i$. So $Q_{j*} Q_j(K)\in \mathcal N_i$. 
Thus,  $Q_i Q_{j*} Q_j(K)=0$.
If $K_1\to K \to K_2 \to$ is a distinguished triangle such that $K_1$ and $K_2$ are in either $\mathcal N_i$ or $\mathcal N_j$, then $Q_i Q_j Q_{j^*}(K)=0$ because the functor $Q_i Q_j Q_{j^*}$ is exact. 
Since any object in $\mathcal N$ is generated by objects from $\mathcal N_i$  we see that
$Q_i Q_{j*} Q_j(K)=0$ for any $K \in \mathcal N$.

 From this, we see that the functor $Q_i Q_{j*} Q_j$ factors through $Q$, i.e., there is a unique exact functor 
$\hat Q_{i*} : \D^b_{\G}(\mbf E_{\Omega})/ \mathcal N \to \D^b_{\G}(\mbf E_{\Omega})/ \mathcal N_i$ such that 
\begin{align}
\label{factor}
\hat Q_{i*} Q = Q_i Q_{j*} Q_j.
\end{align}
By applying $Q_{j*}$ to (\ref{factor}) and using the fact that $Q_j Q_{j*}\simeq \mbox{id}$, we have  $\hat Q_{i*}\hat Q_j = Q_i Q_{j*}$.

Let $\eta_j: \mbox{Id} \to Q_{j*}Q_j$ and $\epsilon_j: Q_j Q_{j*} \to \mbox{Id}$ be the associated natural transformations of the adjoint pair $(Q_j, Q_{j*})$.
We set $\hat \eta_i: \mbox{Id} \to \hat Q_{i*} \hat Q_i$ be the composition 
\[
Q_i  \overset{Q_i \eta_j}{\to}  Q_i Q_{j*} Q_j = \hat Q_{i*} \hat Q_j Q_j = \hat Q_{i*} \hat Q_i Q_i,
\]
and $\hat \epsilon_i: \hat Q_i \hat Q_{i*} \to \mbox{Id}$ be the composition 
\[
\hat Q_i \hat Q_{i*} \hat Q_j = \hat Q_i Q_i Q_{j*} = \hat Q_j Q_j Q_{j*}  \overset{\hat Q_j \epsilon_j}{\to} \hat Q_j. 
\]
The fact that $(Q_j, Q_{j*})$ is an adjoint pair implies that the compositions of natural transformations
\begin{align}
\label{Qj*}
Q_{j*} \overset{\eta_j Q_{j*} } {\to } Q_{j*} Q_j Q_{j*} \overset{Q_{j*} \epsilon_j} {\to} Q_{j*}, \quad
Q_j \overset{Q_j \eta_j} {\to} Q_j Q_{j*} Q_j \overset{\epsilon_j Q_j}{\to} Q_j
\end{align}
are the identity transformation $\mbox{Id}: Q_{j*} \to Q_{j*}$ and $\mbox{id} : Q_j \to Q_j$, respectively. 
By applying $Q_i$ to the first term in (\ref{Qj*}) and using the definitions of $\hat \epsilon_i$ and $\hat \eta_i$, we get that the following composition of natural transformations is again identity transformation
\[
\begin{CD}
Q_i Q_{j*} @> Q_i \eta_j Q_{j*} >> Q_iQ_{j*}Q_j Q_{j*}  @> Q_iQ_{j*}\epsilon_j >> Q_iQ_{j*}\\
@| @| @|\\
\hat Q_{i*} \hat Q_j   @>  \hat \eta_i  \hat Q_{i*} \hat Q_j >> \hat Q_{i*} \hat Q_i \hat  Q_{i*}  \hat Q_{j}  @> \hat Q_{i*} \hat \epsilon_i  \hat Q_j >> \hat Q_{i*} \hat Q_j,
\end{CD}
\]
where the vertical middle term comes from the following identity:
\[
Q_iQ_{j*}Q_j Q_{j*}= \hat Q_{i*} \hat Q_j Q_j  Q_{j*} =  \hat Q_{i*} \hat Q_i Q_i Q_{j*} = \hat Q_{i*} \hat Q_i \hat Q_{i*} \hat Q_j .
\]
 In other words, we have 
  \begin{align}
  \label{adjunction}
   ( \hat Q_{i*} \hat \epsilon_i )(\hat \eta_i  \hat Q_{i*} )= \mbox{Id}_{\hat Q_{i*}}, \quad
   (\hat \epsilon_i \hat Q_i ) ( \hat Q_i \hat \eta_i) = \mbox{Id}_{\hat Q_i},
   \end{align}
where the second identity can be obtained  by applying $\hat Q_j$ to the second term in (\ref{Qj*}).
It is well known, for example ~\cite{MacLane},  that the pair $(\hat Q_i ,  \hat Q_{i*})$ together with the transformations $(\hat \epsilon_i, \hat \eta_i)$  and the above two identities (\ref{adjunction})  make  the pair $(\hat Q_i ,  \hat Q_{i*})$ an adjoint pair.
Moreover,
the fact that $\epsilon_j$ is a natural isomorphism implies that $\hat \epsilon_i$ is a natural isomorphism, 
which is equivalent to say that $\hat Q_{i*}$ is fully faithful.
The existence and the fully-faithfulness of the left adjoint $\hat Q_{i!}$ of $\hat Q_i$ can be proved in a similar way.

Since $Q_{i*}$ and $\hat Q_{i*}$ are fully faithful right adjoints of $Q_i$ and $\hat Q_i$, respectively,  the composition  functor $Q_* = Q_{i*} \circ \hat Q_{i*}$ is the fully faithful right adjoint of $Q= \hat Q_i Q_i$. 
Similary, $ Q_!= Q_{i!} \circ \hat Q_{i!}$  is  the fully faithful left adjoint of $Q$. The lemma is proved for the graph with only two vertices $i$ and $j$.

In general,  let us order the vertex set $I$ as $i_1, \cdots, i_n$. Define a sequence of thick subcategories:
\[
\mathcal N_1 \subseteq  \cdots \subseteq \mathcal N_m \subseteq \cdots \subseteq  \mathcal N_n=\mathcal N,
\]
where $\mathcal N_m$ is the thick subcategory generated by the subcategories $\mathcal N_{i_1}, \cdots, \mathcal N_{i_m}$. 
Then the functor $Q$ is the composition  of the following functors
\[
\D^b_{\G}(\mbf E_{\Omega}) \to \D^b_{\G}(\mbf E_{\Omega})/\mathcal N_1  \to \cdots \to
\D^b_{\G}(\mbf E_{\Omega})/\mathcal N_m \overset{Q_m}{\to} \D^b_{\G}(\mbf E_{\Omega})/\mathcal N_{m+1} 
\to \cdots \to \D^b_{\G}(\mbf E_{\Omega})/\mathcal N.
\]
Each $Q_m$ admits a fully faithful right adjoint $Q_{m*}$ and a fully faithful left adjoint $Q_{m!}$ due to the fact that the functor 
$Q_{i_m}: \D^b_{\G}(\mbf E_{\Omega}) \to \D^b_{\G}(\mbf E_{\Omega})/\mathcal N_{i_m}$ admits fully faithful right and left adjoint functors. 
This can be proved inductively in a similar manner as the proof in the case when the graph has only two vertices, 
with the pair $(\mathcal N_i, \mathcal N_j)$ replaced by $(\mathcal N_m, \mathcal N_{i_{m+1}})$.
From this observation, we see that  the functors
\[
Q_*= Q_{1*} \circ \cdots \circ Q_{n*} \quad \mbox{and} \quad Q_!= Q_{1!}\circ \cdots \circ Q_{n!}
\]
are the fully faithful right and left adjoint functors of $Q$, respectively.
\end{proof}

What did not state in the above proof of Lemma ~\ref{adjoint} for $\Gamma =\{i, j\}$  is that $\hat Q_j$ admits a fully faithful right adjoint $\hat Q_{j*}$ and the composition $Q_{j*} \hat Q_{j*}$ is the right adjoint of $Q$.  By the uniqueness of the right adjoint of $Q$, we have 
$ Q_{j*} \hat Q_{j*} = Q_{i*} \hat Q_{i*}$.
Now we have 
 \begin{align*}
 Q_{j*} \hat Q_{j*} Q = Q_{j*} \hat Q_{j*} \hat Q_i Q_i = Q_{j*} Q_j Q_{i*} Q_i,\quad
 Q_{i*} \hat Q_{i*} Q = Q_{i*} \hat Q_{i*} \hat Q_j Q_j = Q_{i*} Q_i Q_{j*} Q_j.
\end{align*}
So $Q_* Q(K)  = Q_{j*} Q_j \circ Q_{i*} Q_i(K) =Q_{i*} Q_i \circ  Q_{j*} Q_j(K)$.
So it makes sense to state the identities as follows:
\[
Q_* (Q(K)) = \prod_{i\in I} \Phi_{\Omega_i}^{\Omega} \beta_{i*} \beta_i^* \Phi_{\Omega}^{\Omega_i} (K)
\quad \mbox{and} \quad
Q_!(Q(K)) = \prod_{i\in I} \Phi_{\Omega_i}^{\Omega} \beta_{i!} \beta_i^* \Phi_{\Omega}^{\Omega_i} (K)
\]
for any $K\in  \D^b_{\G}(\mbf E_{\Omega})$.  It is clear that such an expression works for an arbitrary graph too.
We also have
\begin{equation}
\label{Q-projection}
Q_! (A\otimes Q(B)) \simeq Q_!(A) \otimes B, \quad \forall A\in \D^b_{\G}(\mbf E_{\Omega})/\mathcal N, B\in  \D^b_{\G}(\mbf E_{\Omega}).
\end{equation}
This is because each pair ($Q_{a!}, Q_{a}$) in the proof of Lemma ~\ref{adjoint} has such a property.
Moreover,
\begin{lem}
\label{B}
\begin{enumerate}
\item[(a)] The inclusion $\iota: \mathcal N \to \D^b_{\G} (\mbf E_{\Omega})$ admits a left adjoint $\iota^*$ and a right adjoint $\iota^!$.
\item[(b)] One has $Q \iota=0$, $\iota^* Q_!=0$, $\iota^! Q_{*}=0$, and,  $\forall A\in\mathcal N$, $B\in \D^b_{\G}(\mbf E_{\Omega})/\mathcal N$,
         \[\mrm{Hom}(Q_! B, \iota A)=0 \quad  \mbox{and} \quad \mrm{Hom}(\iota A, Q_*B)=0;\]
\item[(c)] For any $K\in \D^b_{\G}(\mbf E_{\Omega})$, there are distinguished triangles
        \[ Q_! Q(K) \to K \to \iota \iota^*(K) \to \quad \mbox{and} \quad \iota \iota^! (K) \to K \to Q_* Q(K)\to;\]
\item[(d)] The functors $\iota, Q_*, Q_!$ are fully faithful, i.e.,  the following adjunction  are isomorphic:
        \[ \iota^* \iota \to \mbox {Id} \to \iota^! \iota \quad \mbox{and} \quad QQ_* \to \mbox{Id} \to Q Q_!.\]                 
\end{enumerate}
\end{lem}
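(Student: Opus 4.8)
The plan is to reduce everything to the well-understood case of recollement attached to an open/closed decomposition of a variety, which has already been invoked in the proof of Lemma \ref{adjoint}, and then to bootstrap from the single-vertex case to the general case using the filtration $\mathcal N_1\subseteq\cdots\subseteq\mathcal N_n=\mathcal N$ introduced there. First I would treat $(a)$: the inclusion $\iota:\mathcal N\to\D^b_{\G}(\mbf E_{\Omega})$ of a thick subcategory arising as the kernel of a localization functor $Q$ that itself admits both adjoints $Q_*,Q_!$ automatically admits a left adjoint $\iota^*$ and a right adjoint $\iota^!$; this is exactly \cite[\S2]{Verdier76}, quoted in the excerpt just before Lemma \ref{localization-composition}. (I will write $\iota^*$ for what the statement calls $\iota_*$, matching Verdier's notation and the usage in Lemma \ref{localization-composition}.) Concretely, $\iota^!=\mathrm{cone}$ of the counit $Q_!Q\to\mathrm{Id}$ shifted, and $\iota^*$ is the fibre of $\mathrm{Id}\to Q_*Q$; alternatively one builds them vertex-by-vertex from the fully faithful $\beta_{i*},\beta_{i!}$ of Lemma \ref{adjoint} via the identification $Q_i\simeq\beta_i^*$ under $\Phi_{\Omega}^{\Omega_i}$.

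Next, for $(c)$, I would quote \cite[6.7]{Verdier76} (already used inside the proof of Lemma \ref{localization-composition}): for any recollement datum one has functorial distinguished triangles $Q_!Q(K)\to K\to\iota\iota^*(K)\to$ and $\iota\iota^!(K)\to K\to Q_*Q(K)\to$. Since $Q$, $Q_*$, $Q_!$ have been produced in Lemma \ref{adjoint}, and $\iota^*,\iota^!$ in part $(a)$, this is immediate. For $(d)$, the statement that $\iota,Q_*,Q_!$ are fully faithful is equivalent to the unit/counit maps $\iota^*\iota\to\mathrm{Id}$, $\mathrm{Id}\to\iota^!\iota$, $QQ_*\to\mathrm{Id}$, $\mathrm{Id}\to QQ_!$ being isomorphisms; fully-faithfulness of $Q_*,Q_!$ is already asserted in Lemma \ref{adjoint}, and fully-faithfulness of $\iota$ is trivial since it is the inclusion of a full subcategory. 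So $(d)$ is essentially a restatement.

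For $(b)$, $Q\iota=0$ is the definition of $\mathcal N$ as the kernel. The identities $\iota^*Q_!=0$ and $\iota^!Q_*=0$ follow by adjunction: $\Hom(\iota^*Q_!B,A)=\Hom(Q_!B,\iota A)$ for $A\in\mathcal N$, and $\Hom(Q_!B,\iota A)=\Hom(B,Q\iota A)=\Hom(B,0)=0$ using the adjunction $(Q_!,Q)$ of Lemma \ref{adjoint} together with $Q\iota=0$; dually for $\iota^!Q_*$ via $(Q,Q_*)$. The same computation gives the two displayed vanishings $\Hom(Q_!B,\iota A)=0$ and $\Hom(\iota A,Q_*B)=\Hom(Q\iota A,B)=0$. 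The one genuinely non-formal point — and the step I expect to cost the most care — is verifying that the functors $\iota^*,\iota^!$ built from the vertex-wise data of Lemma \ref{adjoint} are the adjoints of the \emph{global} $\iota$, i.e. that the compositions $Q_{1*}\circ\cdots\circ Q_{n*}$ and $Q_{1!}\circ\cdots\circ Q_{n!}$ interact correctly with the recollement triangles at each stage of the filtration; concretely one must check that at the $m$-th step the support condition ``$\mathrm{supp}$ disjoint from $U_{i_{m+1}}$'' is preserved, which is the analogue of the support computation $Q_iQ_{j*}Q_j(K)=0$ carried out explicitly for two vertices in the proof of Lemma \ref{adjoint}. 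Granting that inductive bookkeeping, all four parts follow from the abstract recollement formalism, so I would organize the write-up as: $(a)$ via Verdier, then $(c)$ and $(d)$ as formal consequences, then $(b)$ by the adjunction computations above, with the inductive support lemma stated once and reused.
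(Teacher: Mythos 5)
Your proposal is correct and follows essentially the same route as the paper, which simply deduces the lemma from Lemma \ref{adjoint} together with the localization formalism of \cite[Prop.~6.5--6.7]{Verdier76} and \cite[Ch.~7, 10]{KS06}. The only difference is that your worry about the vertex-by-vertex construction is unnecessary: once the \emph{global} $Q$ is known to admit fully faithful adjoints $Q_!$ and $Q_*$ (Lemma \ref{adjoint}), the adjoints $\iota^*,\iota^!$ and all of (a)--(d) follow purely formally from Verdier's results, with no further inductive support bookkeeping needed.
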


\begin{proof}
These results  follow from Lemma ~\ref{adjoint} and results from ~\cite[Prop. 6.5, 6.6, 6.7]{Verdier76} and ~\cite[Ch. 7, 10]{KS06}.
\end{proof}

\begin{rem} 
(1). Comparing Lemma ~\ref{adjoint}-\ref{B} with ~\cite[1.4.3]{BBD82}, ~\cite{BL94}, we may regard $\D^b_{\G}(\mbf E_{\Omega})/\mathcal N$ as the equivariant derived category
$\D^b_{\G}(U) $ of  an ``imaginary'' open subvariety $U$ of $\mbf E_{\Omega}$. It is not clear to the author if the existence of such a variety can be justified by the method in
~\cite{B05}.

(2). One may define $\mathscr D^*_{\G} (\mbf E_{\Omega} (D, V, V'))$ for $*=\{ +, -\}$ in a similar way and Lemmas ~\ref{adjoint}, ~\ref{B} and (\ref{Fourier-category}) still hold,
where the thick subcategory $\mathcal N_i$ can be defined to be the one consisting of all objects $K$ such that the support of the cohomology sheaf $H^a(K)$ is in $F_i$ for any $a$.
\end{rem}

\subsection{Convolution product}
\label{convolution}

Let 
\[
p_{st} : \mbf E_{\Omega} (D, V^1, V^2, V^3)  \to \mbf E_{\Omega} (D, V^s, V^t), \quad \phi: \mbf H \to \G,
\]
denote the projections to the $(s, t)$-components,  where the spaces $\mbf E_{\Omega}$'s,  the groups $\mbf H$  and $\G$ are defined in  Section ~\ref{framed}.
It is clear that  $p_{st}$ is a $\phi$-map. 
So, by Section ~\ref{generaldirectimage} (\ref{general-f}), we have a morphism of algebraic stacks:
\[
Qp_{st} : [\mbf H\backslash  \mbf E_{\Omega} (D, V^1, V^2, V^3)]   \to [\G\backslash \mbf E_{\Omega} (D, V^s, V^t)].
\]
From  Section ~\ref{generaldirectimage} (\ref{inducedfunctors}), we have the following functors:
\begin{align}
\begin{split}
&Qp_{st}^*: \D^b_{\G}(\mbf E_{\Omega}(D, V^s, V^t)) \to \D^b_{\mbf H} (\mbf E_{\Omega}(D, V^1, V^2, V^3));\\
&(Qp_{st})_!: \D^-_{\mbf H}(\mbf E_{\Omega}(D, V^1, V^2, V^3)) \to \D^-_{\G}(\mbf E_{\Omega}(D, V^s, V^t)).
\end{split}
\end{align}

\begin{lem}
We have 
\begin{equation}
\label{convolution-A'}
Q\circ Qp_{st}^*\circ \iota=0 
\quad \mbox{and} \quad 
\iota^*\circ Qp_{st!} \circ Q_!=0.
\end{equation}
\end{lem}

\begin{proof}
For the first identity, it suffices to show that 
$Qp^*_{st} (\mathcal N_{i}) \subseteq \mathcal N_i$ for any $i\in I$. Fix an orientation $\Omega_i$ such that $i$ is a source in $\Omega_i$. 
The morphism $p_{st}$ is a morphism of vector bundles over the base space $\mbf E_{\Omega\cap \Omega_i}(D, V^s, V^t)$. 
Let $p'_{st}: \mbf E_{\Omega} (D, V^s, V^t)' \to  \mbf E_{\Omega} (D, V^1, V^2, V^3)'$ be the transpose of $p_{st}$. 
We identify $ \mbf E_{\Omega} (D, V^s, V^t)'$ with $ \mbf E_{\Omega_i} (D, V^s, V^t)$ under the pairing $u_{st}$
and $\mbf E_{\Omega} (D, V^1, V^2, V^3)'$ with $ \mbf E_{\Omega_i} (D, V^s, V^t) \times \mbf E_{\overline \Omega} (D, V^u)$ where $u$ is the number  such that
$\{s, t, u\} =\{1, 2, 3\}$. Then the transpose $p_{st}'$ is noting but the inclusion of 
$ \mbf E_{\Omega_i} (D, V^s, V^t)$ into $  \mbf E_{\Omega_i} (D, V^s, V^t) \times \mbf E_{\overline \Omega} (D, V^u)$. 
Let $\psi: \G\to \mbf H$ be the obvious imbedding. It is clear that
$p'_{st}$ is a $\psi$-map.  This induces a morphism of algebraic stacks:
\[
Qp_{st}': [\G \backslash  \mbf E_{\Omega_i} (D, V^s, V^t)] \to [\mbf H \backslash \mbf E_{\Omega_i} (D, V^s, V^t) \times \mbf E_{\overline \Omega} (D, V^u)].
\]
Let $\mbf H$ act on $\mbf E_{\Omega_i} (D, V^s, V^t)$ by declaring that $G_{V^u}$ acts trivially. Then 
$Qp_{st}'$ factors through $[\mbf H\backslash  \mbf E_{\Omega_i} (D, V^s, V^t)] $. So the functor 
$Qp'_{st!}$ is a composition of  the integration functor  
\[
\mbox{Ind}_!: \D_{\G} ( \mbf E_{\Omega_i} (D, V^s, V^t)) \to \D_{\mbf H}( \mbf E_{\Omega_i} (D, V^s, V^t) )
\]
 from ~\cite[3.7]{BL94} and  the functor 
 \[
 p'_{ij!}: \D_{\mbf H}( \mbf E_{\Omega_i} (D, V^s, V^t) ) \to \D_{\mbf H}(  \mbf E_{\Omega_i} (D, V^s, V^t) \times \mbf E_{\overline \Omega} (D, V^u)).
 \]

Since $G_{V^u}$ acts trivially on $\mbf E_{\Omega_i} (D, V^s, V^t)$, we see that the supports of the complexes $K$ and $\mbox{Ind}_!(K)$ are the same. 
Moreover, we have $\mbox{supp} (p'_{ij!} (K) ) = p'_{ij} \mbox{supp} (K)$.  
So  the decomposition $p'_{st!} \mbox{Ind}_!$ of $Qp'_{st!}$ implies that 
\begin{align}
\label{convolution-A'-a}
Qp'_{st!} (\mathcal N_i) \subseteq \mathcal N_i. 
\end{align}
Note that the notion of $\mathcal N_i$  also makes sense on $\D_{\mbf H}  ( \mbf E_{\Omega_i} (D, V^s, V^t) \times \mbf E_{\overline \Omega} (D, V^u))$.  
Now we have
\begin{align}
\label{convolution-A'-b}
Qp_{st}^* (\Phi_{\Omega'}^{\Omega} K)  = \Phi_{\Omega'}^{\Omega}  Qp'_{st!} (K) [f_1-f_2],
\end{align}
where $f_1$ and $f_2$ are the ranks of $  \mbf E_{\Omega} (D, V^s, V^t)$ and $ \mbf E_{\Omega} (D, V^1, V^2, V^3)$ over $  \mbf E_{\Omega\cap \Omega_i} (D, V^s, V^t)$, respectively.  
This  can be proved by a similar argument as the proof of Theorem 1.2.2.4 in ~\cite{L87}, see also  ~\cite[Theorem 13.2]{KW01}.
By combining (\ref{convolution-A'-a}) and (\ref{convolution-A'-b}), we get the statement  $Qp^*_{st} (\mathcal N_{i}) \subseteq \mathcal N_i$ for any $i\in I$.

The second identity can be shown similarly
 by replacing (\ref{convolution-A'-a}) by
$\iota_i^* (Qp'_{st})^* Q_{i!} =0$ where  $\iota_i^*$ is the left adjoint of the inclusion functor $\iota_i$ of $\mathcal N_i$ into the ambient category and 
$Q_{i!}$ is the left adjoint to the quotient functor $Q_i$ with respect to $\mathcal N_i$.
\end{proof}

We set
\begin{align}
\label{convolution-functors}
\begin{split}
&P_{st}^* = Q\circ Qp_{st}^* \circ Q_! : 
\mathscr D^b_{\G}(\mbf E_{\Omega}(D, V^s, V^t)) \to \mathscr D^b_{\mbf H} (\mbf E_{\Omega}(D, V^1, V^2, V^3));\\
& P_{st!}= Q \circ (Qp_{st})_!  \circ Q_!: \mathscr D^-_{\mbf H}(\mbf E_{\Omega}(D, V^1, V^2, V^3)) \to \mathscr D^-_{\G}(\mbf E_{\Omega}(D, V^s, V^t)).
\end{split}
\end{align}

By  Lemma ~\ref{B} (c) and (\ref{convolution-A'}) in the above Lemma, we have 
\begin{lem}
\label{convolution-A}
$P_{st}^* \circ  Q =Q\circ Qp_{st}^*$ and  $Q_! P_{st!} = Qp_{st!} Q_!$. 
\end{lem}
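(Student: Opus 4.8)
The plan is to verify the two identities $P_{ij}^* \circ Q = Q \circ Qp_{ij}^*$ and $Q_! \circ P_{ij!} = Qp_{ij!} \circ Q_!$ by unwinding the definitions in (\ref{convolution-functors}) and invoking the distinguished triangles and adjunctions collected in Lemma \ref{B}. Both statements are of the same flavor as Lemma \ref{localization-composition} and Lemma \ref{cartesian}, so the real content is to check the vanishing hypotheses that make those formal arguments apply.

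First I would treat $P_{ij}^* \circ Q = Q \circ Qp_{ij}^*$. By definition $P_{ij}^* \circ Q = Q \circ Qp_{ij}^* \circ Q_! \circ Q$. Using the first distinguished triangle of functors in Lemma \ref{B}(c), namely $Q_! Q(K) \to K \to \iota \iota^*(K) \to$, and applying the exact functor $Q \circ Qp_{ij}^*$, we obtain a distinguished triangle
\[
Q \, Qp_{ij}^* \, Q_! Q(K) \to Q \, Qp_{ij}^*(K) \to Q \, Qp_{ij}^* \, \iota \iota^*(K) \to.
\]
It therefore suffices to show the third term vanishes, i.e. $Q \circ Qp_{ij}^* \circ \iota = 0$. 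This is where the geometry enters: if $K \in \mathcal N$, then $Qp_{ij}^*(K)$ is (the pullback of) an object whose singular/set-theoretic support lies in a preimage that still meets the relevant $F_\ell$-type loci in the appropriate way, so that $Qp_{ij}^*(K) \in \mathcal N$ for the category $\D^b_{\mbf H}(\mbf E_\Omega(D,V^1,V^2,V^3))$ (here one uses that $p_{ij}$ is compatible, via a base-change square of the form (\ref{cartesiandiagram}) and Lemma \ref{general-basechange}, with the Fourier transforms $\Phi_\Omega^{\Omega_\ell}$ and the projections that cut out $F_\ell$; the point is that $X(\ell)$ injective is a condition only on the components of a fixed $V^i$, so it is preserved and reflected under $p_{ij}^*$). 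Granting $Qp_{ij}^* \iota = 0$ in the sense that its image lands in $\mathcal N$, we get $Q \circ Qp_{ij}^* \circ \iota = 0$, hence the third term above is zero and $P_{ij}^* \circ Q = Q \circ Qp_{ij}^*$.

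For the second identity I would argue dually. By definition $Q_! \circ P_{ij!} = Q_! \circ Q \circ Qp_{ij!} \circ Q_!$. Using the second distinguished triangle of Lemma \ref{B}(c) applied to $Qp_{ij!} Q_!(A)$, together with $Q_! Q Q_* \simeq Q_!$ and the vanishing $\iota^! Q_* = 0$ (equivalently the Hom-vanishing statements in Lemma \ref{B}(b)), one reduces to checking that $Q_! Q$ acts as the identity on objects of the form $Qp_{ij!} Q_!(A)$; concretely, one must show $\mathrm{Hom}(\iota N, Qp_{ij!} Q_!(A)) = 0$ for all $N \in \mathcal N$, i.e. that $Qp_{ij!} Q_!(A)$ has no subobjects (in the appropriate triangulated sense) in $\mathcal N$. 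By adjunction $\mathrm{Hom}(\iota N, Qp_{ij!} Q_! A) \cong \mathrm{Hom}(Qp_{ij}^! \iota N, Q_! A)$, and $Q_! A$ lies in the essential image of $Q_!$, against which every object of $\mathcal N$ has trivial Hom by Lemma \ref{B}(b); so it is enough that $Qp_{ij}^! \iota N \in \mathcal N$, which is again the assertion that the $F_\ell$-support conditions are preserved under $Qp_{ij}^!$ — the same geometric input as before, now for the exceptional inverse image, handled via the same base-change square.

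The main obstacle, then, is the geometric lemma underlying both reductions: that the thick subcategory $\mathcal N$ (defined via supports of Fourier transforms landing in the $F_i$'s) is carried into the corresponding thick subcategory under $Qp_{ij}^*$ and $Qp_{ij}^!$. Everything else — the triangle chases, the use of adjunctions and full faithfulness from Lemma \ref{B}, and the formal manipulations mimicking Lemma \ref{localization-composition} — is routine. I expect the geometric step to follow from the explicit description $Q_*(K) = \prod_i \Phi_{\Omega_i}^\Omega \beta_{i*}\beta_i^* \Phi_\Omega^{\Omega_i}(K)$ (and its $Q_!$ analogue) displayed after Lemma \ref{adjoint}, combined with the commutation of $p_{ij}^*$, $p_{ij}^!$ with the Fourier–Deligne transforms (Theorem on $\Phi_\Omega^{\Omega'}$ plus Lemma \ref{general-basechange}) and with the fact that the injectivity locus of $X(i)$ pulls back correctly because $i$ being a source makes $X(i)$ depend only on the $i$-components and the arrows out of $i$ within a single representation.
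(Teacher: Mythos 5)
Your treatment of the first identity is correct and, despite the different packaging, is essentially the paper's argument: the paper factors $Q$ through the chain $\mathcal N_1\subseteq\cdots\subseteq\mathcal N$ from the proof of Lemma \ref{adjoint}, asserts that $Qp_{ij}^*$ descends at each stage (i.e.\ preserves each $\mathcal N_{i_a}$), and then identifies the induced functor with $P_{ij}^*$ using $QQ_!\simeq \mathrm{Id}$; your triangle chase with $Q_!Q\to \mathrm{Id}\to \iota\iota^*$ reduces to the equivalent statement $Q\circ Qp_{ij}^*\circ\iota=0$, i.e.\ $Qp_{ij}^*(\mathcal N)\subseteq\mathcal N$, and your sketch of why the support conditions defining the $\mathcal N_\ell$ are preserved (compatibility of $Qp_{ij}^*$ with the Fourier--Deligne transforms, plus the fact that the injectivity conditions involve only the retained components) is exactly the geometric input the paper leaves implicit in its commutative squares. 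One caution: in the paper the vanishing $Q\circ Qp_{ij}^*\circ\iota=0$ appears in (\ref{convolution-A'}) only as a consequence of this lemma, so you must, as you do, establish the preservation of $\mathcal N$ directly rather than quote it.

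The genuine gap is in your second identity. What is needed is $Q_!Q(M)\simeq M$ for $M=Qp_{ij!}Q_!(A)$, and by the first triangle of Lemma \ref{B}(c) this is equivalent to $\iota^*M=0$, i.e.\ $\Hom(M,\iota N)=0$ for all $N\in\mathcal N$. You instead reduce to $\Hom(\iota N,M)=0$, which is the condition $\iota^!M=0$ and, via the second triangle, would only give $M\simeq Q_*Q(M)$ --- membership in the essential image of $Q_*$, not of $Q_!$; these are different orthogonals of $\mathcal N$ in general, so this does not prove the claim. Moreover the adjunction you invoke, $\Hom(\iota N, Qp_{ij!}Q_!A)\cong \Hom(Qp_{ij}^!\iota N, Q_!A)$, is not valid: $Qp_{ij}^!$ is the \emph{right} adjoint of $Qp_{ij!}$, so it can only be used when $Qp_{ij!}$ occupies the first variable; and the vanishing you then need, of $\Hom$ from $\mathcal N$ into the image of $Q_!$, is not among the statements of Lemma \ref{B}(b). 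The repair is short and lands on the same geometric input you identified: $\Hom(Qp_{ij!}Q_!A,\iota N)\cong \Hom(Q_!A, Qp_{ij}^!\iota N)$ by the genuine $(Qp_{ij!},Qp_{ij}^!)$ adjunction, and this vanishes by Lemma \ref{B}(b) once one knows $Qp_{ij}^!\iota N$ lies in $\mathcal N$; since $Qp_{ij}$ is smooth, $Qp_{ij}^!\simeq Qp_{ij}^*[2d]$ up to a Tate twist ($d$ the relative dimension), so this follows from the same preservation statement used for the first identity. As written, however, your chain of reductions for $Q_!P_{ij!}=Qp_{ij!}Q_!$ does not go through.
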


To any objects $K \in \mathscr D^-_{\G}(\mbf E_{\Omega} (D, V^1, V^2))$ and $L \in \mathscr D^-_{\G}(\mbf E_{\Omega}(D, V^2, V^3))$, we associate
\begin{equation}
\label{cdot}
K \cdot L= P_{13!} (P^*_{12}(K) \otimes P^*_{23}(L)) \quad \in \mathscr D^-_{\G}(\mbf E_{\Omega}(D, V^1, V^3)).
\end{equation}
If, in addition, $M \in \mathscr D^-_{\G}(\mbf E_{\Omega} (D, V^3, V^4))$, we have

\begin{prop}
\label{associative}
$(K\cdot L)\cdot M \simeq K \cdot (L\cdot M)$.
\end{prop}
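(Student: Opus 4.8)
The plan is to introduce the representation variety $\mbf E_{\Omega}(D,V^1,V^2,V^3,V^4)$ attached to the five-tuple $(D,V^1,V^2,V^3,V^4)$, equipped with the action of the evident product $\mbf H'=G_D\times G_{V^1}\times G_{V^2}\times G_{V^3}\times G_{V^4}$ of general linear groups as in Section~\ref{framed}, together with its localized category $\mathscr D^-_{\mbf H'}(\mbf E_{\Omega}(D,V^1,V^2,V^3,V^4))$; by the arguments of Section~\ref{general-localization} and the Remark after (\ref{Fourier-category}), the analogues of Lemmas~\ref{adjoint}--\ref{B}, of (\ref{Q-projection}), and of Lemma~\ref{convolution-A} hold on it verbatim. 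Write $q_{ij}$ and $q_{ijk}$ for the projections of this variety onto the $\mbf E_{\Omega}(D,V^i,V^j)$- and $\mbf E_{\Omega}(D,V^i,V^j,V^k)$-components, and let $Q_{ij}^{*}$, $Q_{ij!}$ be the associated localized functors defined as in (\ref{convolution-functors}). I will show that both $(K\cdot L)\cdot M$ and $K\cdot(L\cdot M)$ are isomorphic to the ``symmetric'' object
\[
N:=Q_{14!}\bigl(Q_{12}^{*}(K)\otimes Q_{23}^{*}(L)\otimes Q_{34}^{*}(M)\bigr)\in \mathscr D^-_{\G}(\mbf E_{\Omega}(D,V^1,V^4)),
\]
which is manifestly symmetric in the data.

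The first step is to establish the corresponding identity at the level of algebraic stacks, i.e.\ before localization. The square of projections with corners $\mbf E_{\Omega}(D,V^1,V^2,V^3,V^4)$, $\mbf E_{\Omega}(D,V^1,V^3,V^4)$, $\mbf E_{\Omega}(D,V^1,V^2,V^3)$, $\mbf E_{\Omega}(D,V^1,V^3)$ is cartesian --- this follows exactly as in the paragraph preceding Lemma~\ref{general-basechange}, since the underlying square of vector spaces is a fibre product --- so Lemma~\ref{general-basechange} supplies a base change isomorphism relating the $*$-pullback along the projection $\mbf E_{\Omega}(D,V^1,V^3,V^4)\to\mbf E_{\Omega}(D,V^1,V^3)$ with the $!$-pushforward along $q_{134}$. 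Combining this with Lemma~\ref{general-distribution} (to distribute $q_{134}^{*}$ over the tensor product), Lemma~\ref{general-composition} (to compose pullbacks), the projection formula of Lemma~\ref{general-projection} (to absorb the pullback of $M$ under the $!$-pushforward along $q_{134}$), and Lemma~\ref{composition} (to collapse the composite of $!$-pushforwards along $q_{134}$ and the last projection into the $!$-pushforward along $q_{14}$), one identifies the stack-level expression underlying $(K\cdot L)\cdot M$ with the one underlying $N$. The mirror computation, routed through $\mbf E_{\Omega}(D,V^1,V^2,V^4)$ and $\mbf E_{\Omega}(D,V^2,V^3,V^4)$ via the analogous cartesian square, identifies the expression underlying $K\cdot(L\cdot M)$ with the same one.

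It then remains to run this argument inside the localized categories. For this I would invoke the compatibility lemmas of Section~\ref{generaldirectimage}: Lemma~\ref{cartesian}, together with Lemma~\ref{localization-composition}, promotes the stack-level base change of Lemma~\ref{general-basechange} to a base change between the localized functors $Q_{\bullet}^{*}$, $Q_{\bullet!}$; Lemma~\ref{distribution} descends the distributivity of $*$-pullback over $\otimes$; and Lemma~\ref{reflective} descends the projection formula. The hypotheses of these lemmas are exactly what has already been checked: the vanishings $Q\circ Qp_{ij}^{*}\circ\iota=0$ and $\iota^{*}\circ Qp_{ij!}\circ Q_!=0$ of (\ref{convolution-A'}), the intertwiners $P_{ij}^{*}\circ Q=Q\circ Qp_{ij}^{*}$ and $Q_!\circ P_{ij!}=Qp_{ij!}\circ Q_!$ of Lemma~\ref{convolution-A}, and the $Q$-projection formula (\ref{Q-projection}) --- together with the analogues of all of these on $\mbf E_{\Omega}(D,V^1,V^2,V^3,V^4)$.

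The main obstacle I anticipate is not any single geometric input but the bookkeeping. One must thread the functors $Q$, $Q_!$ and $\iota$ through each application of base change, the projection formula and functor composition in precisely the order in which Lemmas~\ref{localization-composition}, \ref{cartesian}, \ref{distribution} and \ref{reflective} demand their hypotheses; and one must first set up $\mathscr D^-_{\mbf H'}(\mbf E_{\Omega}(D,V^1,V^2,V^3,V^4))$ and certify that Lemmas~\ref{adjoint}--\ref{B} and (\ref{Q-projection}) hold there unchanged. Verifying, via the fibre-product construction of Section~\ref{generaldirectimage}, that every square of quotient stacks occurring in the reduction is genuinely cartesian is the other point that will require attention.
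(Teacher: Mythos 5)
Your proposal is correct and follows essentially the same route as the paper: both sides are reduced to the symmetric expression $S_{14!}(S_{12}^*(K)\otimes S_{23}^*(L)\otimes S_{34}^*(M))$ on the four-fold product, using the same cartesian square of projections through $\mbf E_{\Omega}(D,V^1,V^2,V^3)$ and $\mbf E_{\Omega}(D,V^1,V^3,V^4)$ over $\mbf E_{\Omega}(D,V^1,V^3)$, with Lemma~\ref{general-basechange} descended via Lemma~\ref{cartesian}, the projection formula via Lemmas~\ref{general-projection} and~\ref{reflective}, distributivity via Lemmas~\ref{general-distribution} and~\ref{distribution}, and compositions via Lemmas~\ref{general-composition}, \ref{composition} and~\ref{localization-composition}, the hypotheses being supplied by (\ref{convolution-A'}), Lemma~\ref{convolution-A} and (\ref{Q-projection}). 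The only difference is presentational: the paper interleaves each stack-level identity with its localization step rather than separating the two phases, but the ingredients and bookkeeping are the same.
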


\begin{proof}
Let 
$X_s= \mbf E_{\Omega} (D, V^s)$
and  $G_s=G_{V^s}$, $ \forall s=1, 2, 3, 4$.
Let 
\begin{align*}
\begin{split}
&q_{st}: X_1 \times X_3 \times X_4\to X_s\times X_t, \quad  \phi_{st}: G_D\times \prod_{a=1, 3, 4} G_a \to G_D\times \prod_{a=s, t} G_a;\\ 
&r_{stu}: X_1\times X_2\times X_3\times X_4 \to X_s\times X_t\times X_u, \quad \phi_{stu}: G_D \times \prod_{a=1, 2, 3, 4} G_a \to G_D\times\prod_{a=s, t, u} G_a; \\
&s_{st}: X_1 \times X_2 \times X_3 \times X_4 \to X_i \times X_j, \quad \phi'_{st}:  G_D \times \prod_{a=1, 2, 3, 4 } G_a \to G_D\times G_s\times G_t;
\end{split}
\end{align*}
be the self-explained projections. It is clear that $q_{st}$, $r_{stu}$ and $s_{st}$ are $\phi_{st}$-map, $\phi_{stu}$-map, and $\phi'_{st}$-map, respectively. 
Similar to the functors $P_{st!}$ and $P_{st}^*$, 
we define the functors $Q_{st!}$, $Q_{st}^*$ (resp. $R_{stu!}$, $R_{stu}^*$; $S_{st!}$, $S_{st}^*$) for the map
$q_{st}$ (resp. $r_{stu}$, $s_{st}$). By definition,
\[
(K\cdot L) \cdot M=
Q_{14!} (Q_{13}^* (K\cdot L) \otimes Q_{34}^* (M))
= Q_{14!} ( Q_{13}^* P_{13!} ( P_{12}^*(K) \otimes P_{23}^* (L) ) \otimes Q_{34}^*(M)).
\]
Since the square  $(r_{123}, p_{13}; r_{134}, q_{13})$ is cartesian and by Lemma  ~\ref{general-basechange}, we have 
\begin{equation}
\label{associative-1}
Qq_{13}^* Q p_{13!} \simeq Qr_{134!} Qr_{123}^*.
\end{equation}
By (\ref{convolution-A'}),  we have 
$Q\circ Qr_{123}^*\circ \iota=0$ and $\iota^*\circ Q p_{13!} \circ Q_!=0$.
So by Lemma ~\ref{cartesian} and (\ref{associative-1}),   
\begin{equation}
\label{associative-2}
Q_{13}^* P_{13!} = R_{134!} R_{123}^*.
\end{equation}
Thus 
\[
(K\cdot L) \cdot M\simeq Q_{14!} ( R_{134!} R_{123}^* ( P_{12}^*(K) \otimes P_{23}^* (L) ) \otimes Q_{34}^*(M)).
\]
By Lemma ~\ref{general-projection}, Lemma ~\ref{reflective}, and the fact that $Q\circ  Qr_{134}^* \iota=0$,  
\begin{equation}
\label{associative-3}
(K\cdot L) \cdot M
\simeq Q_{14!}  R_{134!} (R_{123}^* ( P_{12}^*(K) \otimes P_{23}^* (L) ) \otimes  R_{134}^* Q_{34}^*(M)).
\end{equation}
By Lemma ~\ref{composition}, Lemma ~\ref{localization-composition}, and the fact that $\iota^* \circ Qr_{134!}  Q_! =0$, we have
\begin{equation}
\label{associative-4}
Q_{14!}  R_{134!} = S_{14!}.
\end{equation}
By Lemma ~\ref{general-distribution}, Lemma ~\ref{distribution} and the fact that $Q\circ Qr_{123}^* \iota =0$, we have
\begin{equation}
\label{associative-5}
R_{123}^* (P_{12}^* K \otimes P_{23}^* L) = R_{123}^* P_{12}^* K \otimes R_{123}^* P_{23}^* L.
\end{equation}
By Lemma ~\ref{general-composition} and  Lemma ~\ref{localization-composition}, we have 
\begin{equation}
\label{associative-6}
R_{123}^* P_{12}^* =S_{12}^*,\quad 
R_{123}^* P_{23}^*=S_{23}^*,\quad \mbox{and} \quad 
R_{134}^* Q_{34}^*=S_{34}^*.
\end{equation}
Combining (\ref{associative-3})-(\ref{associative-6}), we get
\begin{align*}
\begin{split}
(K\cdot L) \cdot M
&\simeq S_{14!} ( S_{12}^*(K) \otimes S_{23}^*(L) \otimes S_{34}^*(M)).
\end{split}
\end{align*}
Similarly, we can show that 
$K\cdot (L\cdot M) \simeq S_{14!} ( S_{12}^*(K) \otimes S_{23}^*(L) \otimes S_{34}^*(M))$.
The lemma follows.
\end{proof}

\begin{rem}
The proofs of the equalities (\ref{associative-2})-(\ref{associative-6}) can be generalized to similar situations.  
In the sequel, we simply state the similar equalities without mentioning how they are proved. 
\end{rem}

\subsection{Independence of choice of orientation}

Let $\Omega'$ be another orientation of the graph $\Gamma$. 
Let $\mathcal N_{\Omega'}$ be the thick subcategory of $  \D_{\G} (\mbf E_{\Omega'})  $ defined in the same way as $\mathcal N$.
One has, by definition, $\Phi_{\Omega}^{\Omega'} (\mathcal N) = \mathcal N_{\Omega'}$.  So we have an equivalence, 
induced by   $\Phi_{\Omega}^{\Omega'}$,
\begin{align}
\label{Fourier-category}
\Phi_{\Omega}^{\Omega'}: \mathscr D^b_{\G}(\mbf E_{\Omega})  \simeq \mathscr D^b_{\G}(\mbf E_{\Omega'}).
\end{align}

Similarly, one can define the category
$\mathscr D^b_{\mbf H} (\mbf E_{\Omega} (D, V^1, V^2, V^3))$
and the equivalence of categories
$\Phi_{\Omega}^{\Omega'}: \mathscr D^b_{\mbf H}(\mbf E_{\Omega}(D, V^1, V^2, V^3))  \simeq \mathscr D^b_{\mbf H}(\mbf E_{\Omega'}(D, V^1, V^2, V^3))$.

We  define a  convolution product
``$\cdot_{\Omega'}$'' on the categories  $\DD^-_{\G}(\mbf E_{\Omega'}(D, V^s, V^t))$ similar to the one in Section ~\ref{convolution}.  
The following proposition shows that the convolution products are compatible with the Fourier-Deligne transform.

\begin{prop}
\label{Fourier-convolution}
$\Phi_{\Omega}^{\Omega'} (K\cdot L) = \Phi_{\Omega}^{\Omega'} (K) \cdot_{\Omega'} \Phi_{\Omega}^{\Omega'}(L)$,
for any objects $K \in \mathscr D^-_{\G}(\mbf E_{\Omega} (D, V^1, V^2))$ and $L \in \mathscr D^-_{\G}(\mbf E_{\Omega}(D, V^2, V^3))$.
\end{prop}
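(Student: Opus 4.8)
The plan is to reduce the statement to the compatibility of the Fourier--Deligne transform with each of the three operations appearing in the definition $K\cdot L = P_{13!}(P^*_{12}(K)\otimes P^*_{23}(L))$, namely the two pullbacks $P^*_{12}, P^*_{23}$, the tensor product $\otimes$, and the pushforward $P_{13!}$. Since the convolution product $\cdot_{\Omega'}$ on the $\mbf E_{\Omega'}$-side is defined by the literally identical formula with functors $P_{ij}^{*,\Omega'}$, $P_{ij!}^{\Omega'}$ built from the same projections $p_{ij}$ but for the orientation $\Omega'$, it suffices to show that the equivalences $\Phi_{\Omega}^{\Omega'}$ (for the various triples/pairs of $I$-graded spaces, and for $\mbf H$ as well as $\G$) intertwine these three operations in the appropriate sense. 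Concretely, I would establish three commutation isomorphisms: (i) $\Phi_{\Omega}^{\Omega'}\circ P_{ij}^* \simeq P_{ij}^{*,\Omega'}\circ \Phi_{\Omega}^{\Omega'}$; (ii) $\Phi_{\Omega}^{\Omega'}(A\otimes B)\simeq \Phi_{\Omega}^{\Omega'}(A)\otimes \Phi_{\Omega}^{\Omega'}(B)$ up to a shift/twist controlled by ranks (or, more precisely, the right normalization so that the combination appearing in $K\cdot L$ is exactly preserved); and (iii) $\Phi_{\Omega}^{\Omega'}\circ P_{13!}\simeq P_{13!}^{\Omega'}\circ\Phi_{\Omega}^{\Omega'}$. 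Chaining (i)--(iii) then yields the proposition.

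For step (i), I would first prove the statement before localization: $\Phi_{\Omega}^{\Omega'}\circ Qp_{ij}^*\simeq Qp_{ij}^{*,\Omega'}\circ\Phi_{\Omega}^{\Omega'}$ at the level of $\D^b_{\G}$ and $\D^b_{\mbf H}$. This is a base-change--type assertion for the Fourier transform: the relevant diagram relating $\mbf E_{\Omega\cup\Omega'}$-spaces over triples and over pairs is cartesian, the pairing $T$ on the triple side restricts/pulls back to the pairing on the pair side, so $\mathcal L$ pulls back to $\mathcal L$, and then proper base change (Lemma~\ref{general-basechange}) plus the projection formula (Lemma~\ref{general-projection}) and Lemma~\ref{general-composition} give the commutation, with the rank shifts matching because $\mbf E_{\Omega}(D,V^1,V^2,V^3)\to\mbf E_{\Omega\cap\Omega'}(D,V^1,V^2,V^3)$ and its pair-analogues have compatible ranks. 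To pass to the localized categories I would use the description $P_{ij}^* = Q\circ Qp_{ij}^*\circ Q_!$ together with $\Phi_{\Omega}^{\Omega'}(\mathcal N)=\mathcal N_{\Omega'}$ (stated just before~(\ref{Fourier-category})), which guarantees that $\Phi_{\Omega}^{\Omega'}$ descends to the quotients and intertwines the localization functors $Q$, hence also (by uniqueness of adjoints) the functors $Q_!$; then Lemma~\ref{convolution-A} finishes it. Step (ii) is Lemma~\ref{general-distribution} applied on the triple, noting that the definition $\Phi(K)=\pi'_!(\pi^* K\otimes\mathcal L)[r]$ together with the projection formula gives $\Phi(A\otimes B)\simeq \Phi(A)\otimes B[-r]$-type identities, and that in the triple product $P^*_{12}(K)\otimes P^*_{23}(L)$ the two rank shifts assemble correctly; one must be slightly careful about which copy carries the twist by $\mathcal L$, but since $T$ on the triple is a sum of the pair contributions this bookkeeping is routine. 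Step (iii) is the dual of step (i): $\Phi_{\Omega}^{\Omega'}\circ Qp_{13!}\simeq Qp_{13!}^{\Omega'}\circ\Phi_{\Omega}^{\Omega'}$ by the same cartesian-square/base-change argument (now using Lemma~\ref{composition} and Lemma~\ref{general-basechange} in the other direction), then descending to the localization via $P_{13!}=Q\circ Qp_{13!}\circ Q_!$, Lemma~\ref{convolution-A}, and the identity $\iota^*\circ Qp_{13!}\circ Q_!=0$ from~(\ref{convolution-A'}).

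Putting it together:
\begin{align*}
\Phi_{\Omega}^{\Omega'}(K\cdot L)
&= \Phi_{\Omega}^{\Omega'}\, P_{13!}\bigl(P^*_{12}(K)\otimes P^*_{23}(L)\bigr)\\
&\simeq P_{13!}^{\Omega'}\, \Phi_{\Omega}^{\Omega'}\bigl(P^*_{12}(K)\otimes P^*_{23}(L)\bigr)\\
&\simeq P_{13!}^{\Omega'}\bigl(\Phi_{\Omega}^{\Omega'} P^*_{12}(K)\otimes \Phi_{\Omega}^{\Omega'} P^*_{23}(L)\bigr)\\
&\simeq P_{13!}^{\Omega'}\bigl(P^{*,\Omega'}_{12}\Phi_{\Omega}^{\Omega'}(K)\otimes P^{*,\Omega'}_{23}\Phi_{\Omega}^{\Omega'}(L)\bigr)
= \Phi_{\Omega}^{\Omega'}(K)\cdot_{\Omega'}\Phi_{\Omega}^{\Omega'}(L),
\end{align*}
where the successive isomorphisms are (iii), (ii), and (i) respectively.

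I expect the main obstacle to be step (i)/(iii) at the localized level: one must verify that $\Phi_{\Omega}^{\Omega'}$, which a priori is only defined and known to be an equivalence for a \emph{single} orientation change on a \emph{fixed} space, is genuinely compatible with the morphisms $Qp_{ij}$ of algebraic stacks in the sense needed — i.e. that the Fourier transform on the triple space and on the pair space are ``the same'' transform in a way that commutes with $Q$, $Q_!$. The cleanest route is probably to note that $p_{ij}$ is a $\phi$-map compatible with the vector-bundle structures over $\mbf E_{\Omega\cap\Omega'}$ (the bundle on the triple being pulled back from, or at least compatibly fibered over, the bundle on the pair), so that $T_{\text{triple}} = p_{ij}^*T_{\text{pair}} + (\text{term involving the }V^k\text{-coordinate})$, and the extra term drops out after $\pi'_!$ by a standard vanishing (integration of a nontrivial Artin--Schreier sheaf along an affine space), exactly as in Lusztig's treatment of Fourier transforms on quiver varieties. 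Once that identity of transforms is in hand, descent through the localizations is formal from Lemmas~\ref{adjoint}--\ref{convolution-A}.
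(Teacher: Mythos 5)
There is a wrinkle to note first: the paper itself contains no proof of Proposition \ref{Fourier-convolution} — it is stated with the remark that the proof is deferred to the forthcoming paper \cite{Li10b} — so there is no in-paper argument to compare yours against. Judged on its own, your proposal has a genuine gap: none of the three intermediate commutation claims (i)--(iii) is true, and localization does not repair them. Concretely, for (i), write $\mbf E_{\Omega}(D,V^1,V^2,V^3)=A_1\oplus A_2\oplus A_3$ with $A_k=\mbf E_{\Omega}(D,V^k)$, and let $B_k\subset A_k$ be the sum of the $\Omega\setminus\Omega'$-components, i.e. the directions along which $\Phi_{\Omega}^{\Omega'}$ transforms. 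The complex $Qp_{12}^*K$ is pulled back along $A_3$, so applying the triple-space transform and integrating the nontrivial Artin--Schreier sheaf along $B_3$ does not make the extra term ``drop out'': it produces, up to shift, a complex extended by zero from the locus where the dual $B_3$-coordinate vanishes, whereas $Qp_{12}^{*,\Omega'}\Phi_{\Omega}^{\Omega'}(K)$ is constant along the dual $B_3$-directions. Already for $K$ the constant sheaf the two sides differ (a skyscraper in the $B_3'$-direction versus a constant sheaf along $B_3'$), and the difference survives in $\mathscr D$ since neither support is contained in the subvarieties $F_i$ generating $\mathcal N$. The same phenomenon kills (iii): transforming first and then pushing forward along $p_{13}$ amounts to restricting to the zero locus of the dual $B_2$-coordinates rather than integrating over $B_2$. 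And (ii) is false because the Fourier--Deligne transform takes $\otimes$ to convolution along the fibers, not to $\otimes$, even up to shift (compare the transform of skyscraper$\,\otimes\,$constant with that of the constant sheaf).

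The proposition is nevertheless expected to hold, but only for the full composite $P_{13!}(P_{12}^*(-)\otimes P_{23}^*(-))$: the three individual failures cancel against one another, and proving this requires treating the composite at once — writing both $\Phi_{\Omega}^{\Omega'}(K\cdot L)$ and $\Phi_{\Omega}^{\Omega'}(K)\cdot_{\Omega'}\Phi_{\Omega}^{\Omega'}(L)$ as a single $!$-pushforward from a correspondence space over $\mbf E_{\Omega\cup\Omega'}(D,V^1,V^2,V^3)$ twisted by an Artin--Schreier sheaf, and invoking the vanishing of the pushforward of a nontrivial character along affine fibers to identify the two kernels and their shifts. This is exactly the technique the paper uses in the proof of Lemma \ref{omega-generators} (``arguing as in the proof of Proposition 10.2.2 of \cite{Lusztig93}''), and is presumably what the deferred proof in \cite{Li10b} carries out. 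Your closing paragraph does gesture at this vanishing, but you deploy it only to argue that the triple-space and pair-space transforms are ``the same,'' which is not where the difficulty lies. To repair the argument, abandon the three-step factorization, expand both sides into explicit kernels, handle the localization via Lemma \ref{convolution-A} and (\ref{convolution-A'}), and verify the kernel identity (including the rank shifts) directly.
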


\begin{proof}
Due to the fact that $m_{13}^*$ is a fully faithful functor and that the condition to define the thick subcategory $\mathcal N$ on 
$\mbf E_{\Omega}$ and $\mbf E_{\Omega\cup \Omega'}$ are the same, one can deduce that $\iota^* m_{13}^* Q_!=0$. From this fact,   
the functor $\Phi_{\Omega}^{\Omega'}: \DD^-_{\G}(\mbf E_{\Omega} (D, V^1, V^3)) \to \DD^-_{\G}(\mbf E_{\Omega'}(D, V^1, V^3))$ can be rewritten as
\[
\Phi_{\Omega}^{\Omega'} (K) =M'_{13!} (M_{13}^* (K) \otimes \mathcal L_{13})[r_{13}].
\]
where  the notations $\mathcal L_{13}$ and $r_{13}$ are defined in Section ~\ref{Fourier} and $M'_{13!}$ and $M_{13}^*$ are obtained from $m_{13!}'$ and $m_{13}^*$ as in
(\ref{convolution-functors}).
By definition, we have 
\begin{equation*}
\Phi_{\Omega}^{\Omega'} (K\cdot L) =\Phi_{\Omega}^{\Omega'} (P_{13!} (P_{12}^*(K) \otimes P_{23}^* (L)))
=M'_{13!} (M_{13}^* P_{13!} (P_{12}^*(K) \otimes P_{23}^* (L)) \otimes \mathcal L_{13})[r_{13}].
\end{equation*}
Consider the following cartesian diagram
\[
\begin{CD}
\mbf E_{\Omega \cup \Omega'} (D, V^1, V^3) \times \mbf E_{\Omega} (D, V^2) @>s>> \mbf E_{\Omega\cup \Omega'}(D, V^1, V^3)\\
@VrVV @Vm_{13} VV\\
\mbf E_{\Omega}(D, V^1, V^2, V^3) @>p_{13}>> \mbf E_{\Omega}(D, V^1, V^3).
\end{CD}
\]
By an argument similar to the proof of (\ref{associative-2}), we have $M_{13}^* P_{13!} =S_! R^*$. So
\begin{equation}
\label{Fourier-convolution-LHS}
\begin{split}
\Phi_{\Omega}^{\Omega'} (K\cdot L)&=M'_{13!} (S_! R^*(P_{12}^*(K) \otimes P_{23}^* (L)) \otimes \mathcal L_{13})[r_{13}]\\
&=M'_{13!} S_! (R^*P_{12}^*(K) \otimes R^* P_{23}^* (L) \otimes S^* \mathcal L_{13})[r_{13}].
\end{split}
\end{equation}

On the other hand, we have
\begin{equation*}
\begin{split}
\Phi_{\Omega}^{\Omega'}(K) & \cdot_{\Omega'} \Phi_{\Omega}^{\Omega'} (L) 
=P'_{13!} ((P'_{12})^* (\Phi_{\Omega}^{\Omega'}(K))\otimes (P'_{23})^* (\Phi_{\Omega}^{\Omega'}(L)))\\
&=P'_{13!} ( (P'_{12})^* M'_{12!} (M_{12}^* (K) \otimes \mathcal L_{12})[r_{12}] \otimes (P'_{23})^* M'_{23!} (M_{23}^*(L)\otimes \mathcal L_{23})[r_{23}]),
\end{split}
\end{equation*}
where  $P_{st!}'$ and $(P_{st}')^*$ are obtained from the projections 
$p_{st}'$ from  $\mbf E_{\Omega'}(D, V^1, V^2, V^3)$ to  $\mbf E_{\Omega'}(D, V^s, V^t)$, $\mathcal L_{st}$ and $r_{st}$ are from Section ~\ref{Fourier}, and
the functors $M'_{st!}$ and $M_{st}^*$ are obtained from the projections $m_{st}$ in Section ~\ref{Fourier} as $P_{st!}$ 
and $P_{st}^*$ from $p_{st}$ in (\ref{convolution-functors}).
Consider the following cartesian diagrams
\[
\begin{CD}
\mbf E_{\Omega \cup \Omega'} (D, V^1, V^2) \times \mbf E_{\Omega'}(D, V^3) @>s_1' >> \mbf E_{\Omega'} (D, V^1, V^2, V^3) \quad\\
@Vr'_1 VV @V p'_{12} VV \\
\mbf E_{\Omega \cup \Omega'} (D, V^1, V^2) @>m'_{12} >> \mbf E_{\Omega'} (D, V^1, V^2),
\end{CD}
\]
and
\[
\begin{CD}
\mbf E_{\Omega'}(D, V^1)\times \mbf E_{\Omega\cup \Omega'} (D, V^2, V^3) @>s_2'>> \mbf E_{\Omega'}(D, V^1, V^2, V^3)\\
@Vr'_2VV @Vp'_{23} VV\\
\mbf E_{\Omega\cup \Omega'} (D, V^2, V^3) @>m'_{23}>> \mbf E_{\Omega'}(D, V^2, V^3).
\end{CD}
\]
From these cartesian diagrams and similar to the proof of  (\ref{associative-2}), we have
$(P_{12}')^* M_{12!}' = S'_{1!} (R'_1)^*$ and 
$(P'_{23})^* M'_{23!} = S'_{2!} (R'_2)^*$.
So 
\begin{equation*}
\begin{split}
\Phi&_{\Omega}^{\Omega'}(K)  \cdot_{\Omega'} \Phi_{\Omega}^{\Omega'} (L) =
P'_{13!} ( S'_{1!} (R'_1)^* (M_{12}^* (K) \otimes \mathcal L_{12}) \otimes S'_{2!} (R'_2)^* (M_{23}^*(L)\otimes \mathcal L_{23}))[r_{12}+r_{23}]\\
&=P'_{13!} ( S'_{1!}  ((R'_1)^* M_{12}^* (K) \otimes  (R'_1)^* \mathcal L_{12})\otimes S'_{2!} ((R'_2)^* M_{23}^*(L)\otimes (R'_2)^*( \mathcal L_{23})))[r_{12}+r_{23}]\\
&=P'_{13!} S'_{1!}  ((R'_1)^* M_{12}^* (K) \otimes  (R'_1)^* \mathcal L_{12} \otimes  (S'_1)^*S'_{2!} ((R'_2)^* M_{23}^*(L)\otimes (R'_2)^*( \mathcal L_{23})))[r_{12}+r_{23}].
\end{split}
\end{equation*}
We form the following cartesian diagram
\[
\begin{CD}
\mbf E_{\Omega \cup \Omega'} (D, V^1, V^2,  V^3) 
\times \mbf E^2_{\Omega\backslash \Omega'} (D, V^2)
 @>t'_2>> \mbf E_{\Omega\cup \Omega'}(D, V^1, V^2) \times \mbf E_{\Omega'}(D,V^3)\\
@Vt'_1VV @Vs'_1VV \\
\mbf E_{\Omega'}(D, V^1)\times \mbf E_{\Omega\cup \Omega'} (D, V^2, V^3) @>s'_2>> \mbf E_{\Omega'}(D,V^1, V^2, V^3).
\end{CD}
\]
This cartesian diagram gives rise to the identity, $(S'_1)^* S'_{2!} = T'_{2!} (T'_1)^*$. So
\begin{equation*}
\begin{split}
&\Phi_{\Omega}^{\Omega'}(K)  \cdot_{\Omega'} \Phi_{\Omega}^{\Omega'} (L) =\\
&=P'_{13!} S'_{1!}  ((R'_1)^* M_{12}^* (K) \otimes  (R'_1)^* \mathcal L_{12} \otimes  T'_{2!} (T'_1)^*( (R'_2)^* M_{23}^*(L)\otimes (R'_2)^*( \mathcal L_{23})))[r_{123}]\\
&=P'_{13!} S'_{1!}  T'_{2!} ( (T'_2)^*(R'_1)^* M_{12}^* K  \otimes  (T'_2)^* (R'_1)^* \mathcal L_{12} \otimes(T'_1)^* (R'_2)^* M_{23}^*L\otimes  (T'_1)^*
(R'_2)^* \mathcal L_{23})[r_{123}]\\
&=P'_{13!} S'_{1!}  T'_{2!} 
( (T'_2)^*(R'_1)^* M_{12}^* K \otimes(T'_1)^* (R'_2)^* M_{23}^*L 
\otimes  (T'_2)^* (R'_1)^* \mathcal L_{12}\otimes  (T'_1)^* (R'_2)^* \mathcal L_{23})[r_{123}].
\end{split}
\end{equation*}
where $r_{123}=r_{12}+r_{23}$. Let $\mbf F_1=\mbf E_{\Omega\cup \Omega'}(D, V^1, V^3)$ and 
\[
t'_3: \mbf F\equiv \mbf E_{\Omega \cup \Omega'} (D, V^1, V^2,  V^3) 
\times \mbf E^2_{\Omega\backslash \Omega'} (D, V^2)
 \to \mbf Z\equiv \mbf F_1\times \mbf E_{\Omega}(D, V^2)\times \mbf E^2_{\Omega\backslash \Omega'}(D, V^2)
\]
be the obvious projection.
Note that in the component $\mbf E_{\Omega \cup \Omega'} (D, V^2)$, there is a copy of $\mbf E_{\Omega\backslash \Omega'}(D, V^2)$, 
denoted by $\mbf E^1_{\Omega}(D, V^2)$. 
Observe that  
\[
p'_{13} s'_1 t'_2=w  t'_3, \quad
m_{12} r'_1 t'_2= y_2 t'_3,\quad \mbox{and} \quad 
m_{23} r'_2 t'_1= y_1 t'_3,
\]
where $w$ is the projection from $\mbf Z$ to $\mbf E_{\Omega'}(D, V^1, V^3)$, and $y_1$ and $y_2$ are the projections 
from $\mbf Z$ to  $\mbf E_{\Omega}(D, V^2, V^3)$ (for $y_1$) and $\mbf E_{\Omega}(D, V^1, V^2)$, respectively. 
 Note that there are two choices for the projections for each $y_i$, but we choose the unique one such that the above identities hold.
 So
\begin{equation*}
\begin{split}
\Phi_{\Omega}^{\Omega'}(K)  \cdot_{\Omega'} \Phi_{\Omega}^{\Omega'} (L) 
&=W_! T'_{3!} ( (T'_3)^* Y_2^*  K\otimes (T'_3)^* Y_1^*L \otimes (T'_2)^* (R'_1)^* \mathcal L_{12} \otimes  (T'_1)^* (R'_2)^* \mathcal L_{23})[r_{123}]\\
&=W_! ( Y_2^* K \otimes Y_1^* L \otimes T'_{3!} ((T'_2)^* (R'_1)^* \mathcal L_{12} \otimes  (T'_1)^* (R'_2)^* \mathcal L_{23}))[r_{123}].
\end{split}
\end{equation*}
Let 
$\mbf F_2=\mbf E_{\Omega \cup \Omega'} (D, V^2)\times \mbf E_{\Omega\backslash \Omega'}(D, V^2)$. Thus, 
$\mbf F=\mbf F_1\times \mbf F_2$.
Each component $\mbf E^i_{\Omega}(D, V^2)$ defines a projection,
$\pi_{s,s+1}:\mbf F_2\to \mbf E_{\Omega\cup \Omega'} (D, V^2)$, for any $ s=1, 2$.
We have 
\[
r_1' t_2' =w_1 (1\times \pi_{23}) 
\quad \mbox{and} \quad 
r_2' t_1' =w_2 (1\times \pi_{12}),
\]
where $1: \mbf F_1\to \mbf F_1$ is the identity map and  $w_s$ is the projection of  $\mbf E_{\Omega\cup \Omega'}(D, V^1, V^2, V^3)$ 
to $\mbf E_{\Omega\cup \Omega'}(D, V^s, V^{s+1})$ for any $s=1, 2$. Hence,
\begin{equation*}
\Phi_{\Omega}^{\Omega'}(K)  \cdot_{\Omega'} \Phi_{\Omega}^{\Omega'} (L)=
W_! ( Y_2^* K \otimes Y_1^* L \otimes T'_{3!}
((1\times \Pi_{23})^* W_1^*  \mathcal L_{12} \otimes  (1\times \Pi_{12})^* W_2^* \mathcal L_{23}))[r_{123}].
\end{equation*}
Observe that  $W_1^* \mathcal L_{12} = P^*_1  \mathcal L_1^* \otimes P_2^*  \mathcal L_2$ 
and $W_2^* \mathcal L_{23}=P_3^* \mathcal L_3 \otimes P_2^* \mathcal L_2^*$
where $\mathcal L_s^*$ is the dual of the local system $\mathcal L_s$ in  ~\ref{Fourier}, 
and  $p_s$ are the projections from $\mbf E_{\Omega\cup\Omega'}(D, V^1, V^2,  V^3)$ to $\mbf E_{\Omega\cup \Omega'}(D, V^s)$. So
\begin{equation*}
\begin{split}
T'_{3!}&((1\times \Pi_{23})^* W_1^*  \mathcal L_{12} \otimes  (1\times \Pi_{12})^* W_2^* \mathcal L_{23})=\\
&
=T'_{3!} (((1\times \Pi_{23})^* (P_1^* \mathcal L_1^* \otimes P_2^* \mathcal L_2)  \otimes (1\times \Pi_{12})^*  P_2^*( \mathcal L^*_2 \otimes P_3^*\mathcal L_3))\\
&=T'_{3!} ( (T'_3)^* P^* \mathcal L_{13} \otimes  (1\times \Pi_{23})^*  P_2^* \mathcal L_2\otimes (1\times \Pi_{12})^*  P_2^*( \mathcal L^*_2 ))\\
&=P^* \mathcal L_{13} \otimes T'_{3!} (1\times \Pi_{23})^*  P_2^* \mathcal L_2\otimes (1\times \Pi_{12})^*  P_2^*( \mathcal L^*_2 )),
\end{split}
\end{equation*}
where $P^*$ comes from the projection $p: \mbf Z \to \mbf E_{\Omega\cup \Omega'} (V^1, V^3)$.
By a similar argument as ~\cite[p. 44]{KW01}, we have 
\[
T'_{3!} (1\times \Pi_{23})^*  P_2^* \mathcal L_2\otimes (1\times \Pi_{12})^*  P_2^*( \mathcal L^*_2 ))=
\Delta_! \bar {\mbb Q}_{l, \mbf Z_1} [-2 r'].
\]
where $\Delta_!$ is from the diagonal map 
$\delta: \mbf Z_1\equiv \mbf E_{\Omega\cup \Omega'} (D, V^1, V^3) \times \mbf E_{\Omega}(D, V^2)  \to \mbf Z$ 
and $r'$ is the rank of $t_3'$, which is equal to $\sum_{h\in \Omega\backslash \Omega'} \dim V^2_{h'}\dim V^2_{h''}$. So 
\[
P^* \mathcal L_{13} \otimes T'_{3!} (1\times \Pi_{23})^*  P_2^* \mathcal L_2\otimes (1\times \Pi_{12})^*  P_2^*( \mathcal L^*_2 ))=
P^* \mathcal L_{13} \otimes\Delta_! \bar {\mbb Q}_{l, \mbf Z_1} [-2 r']=
\Delta_! S^* \mathcal L_{13}[-2r'].
\]
Therefore,
\begin{equation}
\label{Fourier-convolution-RHS}
\begin{split}
\Phi_{\Omega}^{\Omega'}(K)  \cdot_{\Omega'} \Phi_{\Omega}^{\Omega'} (L) &=
W_! ( Y_2^* K \otimes Y_1^* L \otimes\Delta_! S^* \mathcal L_{13}) [r_{123}-2r']\\
&=W_! \Delta_! (\Delta^* Y_2^* K\otimes \Delta^* Y_1^* L \otimes S^* \mathcal L_{13}) [r_{13}]\\
&=M'_{13!} S_! (R^*P_{12}^*(K) \otimes R^* P_{23}^* (L) \otimes S^* \mathcal L_{13})[r_{13}],
\end{split}
\end{equation}
where the last identity comes from the observation that $w\delta=m_{13}' s$, $y_2\delta = p_{12} r$ and $y_1 \delta  =p_{23} r$. 
The proposition follows  from (\ref{Fourier-convolution-LHS}) and (\ref{Fourier-convolution-RHS}).
\end{proof}

\section{Defining relation}

\subsection{Generator}
\label{Generator}
Given any pair $(X^1, X^2 ) \in \mbf E_{\Omega}(D, V^1, V^2)$, we write 
\[
\mbox{``}\; X^1\hookrightarrow X^2\;\mbox{''}
\] 
if there exists an $I$-graded inclusion  $\rho: V^1\to V^2$
such that  $\rho_{h''} x^1_h =x^2_h \rho_{h'}, \; q^1_i=q^2_i \rho_i$, for any  $h$ in $\Omega$ and $i$ in $I$.
We also write ``$\rho: X^1\hookrightarrow X^2$''  for such a $\rho$ and ``$X^1\overset{\rho}{\hookrightarrow} X^2$'' for the triple $(X^1, X^2, \rho)$.
Consider the smooth variety
\begin{equation}
\label{Z}
\mbf Z_{\Omega} \equiv \mbf Z_{\Omega}(D, V^1, V^2)=\{ (X^1, X^2, \rho ) | (X^1, X^2) \in \mbf E_{\Omega}(D, V^1, V^2)\; \mbox{and}\; \rho: X^1 \hookrightarrow X^2\}.
\end{equation}
We have a diagram
\begin{equation}
\label{Z-1}
\begin{CD}
\mbf E_{\Omega}(D, V^1) @<\pi_1<< \mbf Z_{\Omega}(D, V^1, V^2) @>\pi_2>> \mbf E_{\Omega}(D, V^2),\\
@|  @V\pi_{12}VV @|\\
\mbf E_{\Omega}(D, V^1)@<p_1<< \mbf E_{\Omega}(D, V^1, V^2) @>p_2>> \mbf E_{\Omega}(D, V^2),
\end{CD}
\end{equation}
 where $\pi_1$ and $p_1$ are projections to the first components,  $\pi_2$ and $p_2$ are the projections to the second components,  
 and $\pi_{12}$ is the projection to $(1, 2)$  components.

Similar to $\mbf Z_{\Omega}(D, V^1, V^2)$, let
$\mbf Z^t_{\Omega}(D, V^1, V^2)=\{(X^1, X^2, \rho) | \rho: X^2\hookrightarrow X^1\}$. 
Let $\pi_{12}$ denote the projection $\mbf Z^t_{\Omega}(D, V^1, V^2) \to \mbf E_{\Omega}(D, V^1, V^2)$.
Note that $\mbf Z_{\Omega}(D, V^1, V^2)\simeq \mbf Z^t_{\Omega}(D, V^2, V^1)$.

 We set  the following complexes in $\mathscr D^b_{\G}( \mbf E_{\Omega})$ with $\mu=\lambda-\nu$ and $n\in \mbb N$:
\begin{equation}
\label{Generator-I}
\begin{split}
\II_{\mu}  &= Q \left (\pi_{12!} (\bar{\mbb Q}_{l,  \mbf Z_{\Omega}}) \right ),  \quad \quad \quad \;  \mbox{ if}\;  \dim V^1=\dim V^2=\nu; \\
\EE^{(n)}_{\mu, \mu-n \alpha_i}  &=Q \left ( \pi_{12!}  (\bar{\mbb Q}_{l, \mbf Z_{\Omega}}) [e_{\mu,n\alpha_i}]  \right ), 
\quad  \mbox{if}\;    \dim V^1=\nu \;\mbox{and}\;  \dim V^2=\nu+ ni;\\ 
\FF^{(n)}_{\mu, \mu +n \alpha_i} &=  Q\left ( \pi_{12!}  (\bar{\mbb Q}_{l, \mbf Z^t_{\Omega}}) [f_{\mu,n\alpha_i}] \right ),
\quad  \mbox{if}  \dim V^1=\nu\;  \mbox{and}\; \dim V^2=\nu-ni;
\end{split}
\end{equation}
where 
\begin{equation}
\label{coefficient} 
e_{\mu, n\alpha_i}=n \left (d_i +\sum_{h\in \Omega: h'=i} \nu_{h''} -(\nu_i+n) \right )
\quad \mbox{and}\quad  
f_{\mu, n\alpha_i}=n\left ((\nu_i -n)-\sum_{h\in \Omega: h''=i} \nu_{h'}\right ).
\end{equation}
We set $\II_{\mu}$, $\EE^{(n)}_{\mu,\mu-n\alpha_i}$ and $\FF^{(n)}_{\mu,\mu+n\alpha_i}$ to 
be zero if $\mu\in \mbf X$ can not be written as the form $\mu=\lambda-\nu$ for some $\nu\in \mbb N[I]$.
We have 

\begin{lem}
\label{generator-Fourier}
$\Phi_{\Omega}^{\Omega'} (\II_{\mu}) =\II_{\mu}$, 
$\Phi_{\Omega}^{\Omega'} \left (\EE^{(n)}_{\mu, \mu-n \alpha_i} \right ) =\EE^{(n)}_{\mu, \mu-n \alpha_i} $
and 
$\Phi_{\Omega}^{\Omega'} \left (\FF^{(n)}_{\mu, \mu +n \alpha_i} \right ) =\FF^{(n)}_{\mu, \mu +n \alpha_i}$
where the elements on the right-hand sides are complexes in $\DD^b_{\G}(\mbf E_{\Omega'})$ defined in a similar way as the complexes on the left-hand sides.
\end{lem}
\begin{proof}
We shall show that $\Phi_{\Omega}^{\Omega'} \left (\EE^{(n)}_{\mu, \mu-n \alpha_i} \right ) =\EE^{(n)}_{\mu, \mu-n \alpha_i} $. The others can be shown in a similar way. 
Consider the following diagram
\[
\begin{CD}
\mbf Z_{\Omega} @<<< \hat {\mbf Z} @>c>> \hat {\mbf Z}_{\Omega'} \\
@V\pi_{12} VV @V\beta VV @VVV\\
\mbf E_{\Omega} @<m_{12}<< \mbf E_{\Omega\cup \Omega'} @>m_{12}'>> \mbf E_{\Omega'},
\end{CD}
\]
where the morphisms $m_{12}$ and $m_{12}'$ are defined in Section ~\ref{Fourier}, $\pi_{12}$ is defined in (\ref{Z-1}), 
$\hat{\mbf Z}=\mbf Z_{\Omega}\times_{\mbf E_{\Omega}} \mbf E_{\Omega\cup \Omega'}$, the morphism $c$ is 
the map by forgetting the components $x^1_h$ and $x^2_h$ for $h\in\Omega\backslash \Omega'$, and the rest of the morphisms are clearly defined.

By definition, we see that $\mbf Z_{\Omega}$ is a locally closed subvariety  in the   variety $\mbf E_{\Omega}\times \prod_{i\in I} \Hom( V^1_i, V^2_i)$, 
which, in turn, can be embedded as an open subvariety into a certain variety $\mbf E_{\Omega}\times \mbf P$, where $\mbf P$ is a certain projective variety, by 
~\cite{Zelevinsky85}. Then the morphism $\pi_{12}$ is a compactifiable morphism defined in ~\cite[p. 86]{FK88}. From this and the fact that the left square of the above diagram is cartesian, we may apply the base change for  compactifiable morphisms in ~\cite[Theorem 8.7]{FK88} to get the following identity
\begin{equation*}
\begin{split}
\Phi_{\Omega}^{\Omega'} (\pi_{12!} (\Q_{l, \mbf Z_{\Omega}}) )
=m'_{12!} (m_{12}^* \pi_{12!} (\Q_{l, \mbf Z_{\Omega}}) \otimes \mathcal L_{12})[r_{12}]\\
=m'_{12!} ( \beta_! (\Q_{l, \hat{\mbf Z}})\otimes \mathcal L_{12})[r_{12}]
=m'_{12!}\beta_!(\beta^*\mathcal L_{12})[r_{12}],
\end{split}
\end{equation*}
where $\mathcal L_{12}$ and $r_{12} $ are defined in ~\ref{Fourier}.

Let $\hat {\mbf Z}_1$ be the subvariety of $\hat{ \mbf  Z}$ defined by the condition 
$x^1_h\overset{\rho }{\hookrightarrow} x^2_h$ for any $h \in\Omega'\backslash \Omega$.
Observe that that the map $c$ is a vector bundle  with fiber dimension 
\[
f_c=\sum_{h\in \Omega\backslash \Omega':h''\neq i} \nu^2_{h'}\nu^2_{h''} +\sum_{h\in \Omega\backslash \Omega': h''=i} \nu^1_{h'} \nu^1_{h''},
\]
 and 
the restriction of the map $u_{12}\beta$, where $u_{12}$ is defined in Section ~\ref{Fourier}, to the fiber $c^{-1}(X,\rho)$ of $c$ is $0$ if
$c^{-1}(X, \sigma)\subseteq \hat{\mbf Z}_1$ and a non-constant affine linear function,  otherwise. 
By arguing in exactly the same way as the proof of Proposition 10.2.2 in ~\cite{Lusztig93}, we get
\begin{align*}
\begin{split}
\Phi_{\Omega}^{\Omega'} (\pi_{12!}  & (\Q_{l,\mbf Z_{\Omega}}) )
\simeq \pi_{12!}^{\Omega'} (\Q_{l, \mbf Z_{\Omega'}}[r_{12} -2f_c],
\end{split}
\end{align*}
where $\pi_{12}^{\Omega'}$ is defined similar to $\pi_{12}$. 
It is clear that 
\[
r_{12} -2f_c=-n \sum_{h\in \Omega\backslash \Omega':h'= i} \nu_{h''}
+ n\sum_{h\in \Omega\backslash \Omega': h''=i} \nu_{h'}.
\]
By combining the above analysis, we have 
\begin{equation*}
\begin{split}
\Phi_{\Omega}^{\Omega'} & \left (\EE^{(n)}_{\mu, \mu-n \alpha_i} \right )
=Q \left ( \pi_{12!}^{\Omega'} (\Q_{l,\mbf Z_{\Omega'}}[r_{12} -2f_c] [n(d_i +\sum_{h\in \Omega: h'=i} \nu_{h''} -(\nu_i+n))]  \right ) =\EE^{(n)}_{\mu,\mu-n\alpha_i}, 
\end{split}
\end{equation*}
where the second equality is due to the following  computation.
\[
r_{12} -2f_c + n(d_i +\sum_{h\in \Omega: h'=i} \nu_{h''} -(\nu_i+n))
=n(d_i +\sum_{h\in \Omega': h'=i} \nu_{h''} -(\nu_i+n)).
\]
The lemma follows.
\end{proof}

\subsection{Defining relation} 

We shall show that the complexes $\II_{\mu}$, $\EE^{(n)}_{\mu, \mu-n \alpha_i}$ and $\FF^{(n)}_{\mu, \mu +n \alpha_i}$ satisfy the defining relations of $_{\mbb A}\! \dot{\U}$.

\begin{lem}
\label{I-I}
$\II_{\mu}\II_{\mu'} =\delta_{\mu, \mu'} \II_{\mu}$ where $\mu'=\lambda -\nu'$ and  $\nu'\in \mbb N[I]$.
\end{lem}

\begin{proof}
Assume that $V^1=V^2=V^3$ has dimension $\nu$. Consider the following cartesian diagram
\[
\begin{CD}
\mbf Z_{\Omega}(D, V^1, V^2) @< << \mbf Z_1 @>s_1>> \mbf Z\\
@V\pi_{12}VV @Vr_1VV @Vs_2VV\\
\mbf E_{\Omega}(D, V^1, V^2) @<p_{12}<< \mbf E_{\Omega}(D, V^1, V^2, V^3) @< r_2<< \mbf Z_2,
\end{CD}
\]
where $\mbf Z_1=\mbf Z_{\Omega}(D, V^1, V^2) \times \mbf E_{\Omega}(D, V^3)$, $\mbf Z_2=\mbf E_{\Omega}(D, V^1)\times \mbf Z_{\Omega}(D, V^2, V^3)$,
\[
\mbf Z=\mbf Z_1\times_{\mbf E_{\Omega} (D, V^1, V^2, V^3)} \mbf Z_2= \{(X^1, X^2, X^3, \rho_1, \rho_2) | 
X^1\overset{\rho_1}{\hookrightarrow} X^2 \overset{\rho_2}{\hookrightarrow} X^3\}.
\]
and  the  morphisms  are the obvious projections. 
The cartesian square on the left gives rise to the identity 
$P_{12}^* \Pi_{12!} (\bar{\mbb Q}_{l,\mbf Z_{\Omega}(D, V^1, V^2)})= R_{1!} (\bar{\mbb Q}_{l, \mbf Z_1})$.
Similarly, $P_{23}^* \Pi_{12!} (\bar{\mbb Q}_{l,\mbf Z_{\Omega}(D, V^2, V^3)})= R_{2!} (\bar{\mbb Q}_{l, \mbf Z_2})$. 
So 
\begin{equation*}
\begin{split}
\II_{\mu} \cdot \II_{\mu} & = P_{13!} (P_{12}^* (\II_{\mu}) \otimes P_{23}^* (\II_{\mu})) 
= P_{13!}(R_{1!} (\Q_{l, \mbf Z_1})\otimes R_{2!} (\Q_{l, \mbf Z_2}))\\
&=P_{13!} R_{1!} (\Q_{l, \mbf Z_1}\otimes R_1^* R_{2!} (\Q_{l, \mbf Z_2}))
=P_{13!} R_{1!} (R_1^* R_{2!} (\Q_{l, \mbf Z_2}) ).
\end{split}
\end{equation*}
The right cartesian square in the above diagram   implies that $R_1^* R_{2!} = S_{1!} S_2^*$.
Thus
\[
\II_{\mu} \cdot \II_{\mu}=P_{13!} R_{1!} (R_1^* R_{2!} (\Q_{l, \mbf Z_2}) ) =
P_{13!} R_{1!}S_{1!} S_2^* (\Q_{l, \mbf Z_2})= P_{13!} R_{1!}S_{1!} (\Q_{l, \mbf Z}).
\]
Consider the following commutative  diagram 
\[
\begin{CD}
\mbf Z @>t>> \mbf Z_{\Omega}(D, V^1, V^3)\\
@Vr_1s_1VV @V\pi_{12}VV \\
\mbf E_{\Omega}(D, V^1, V^2. V^3) @> p_{13}>> \mbf E_{\Omega}(D, V^1, V^3),
\end{CD}
\]
where $t$ sends $X^1\overset{\rho_1}{\hookrightarrow} X^2 \overset{\rho_2}{\hookrightarrow} X^3$ to $X^1\overset{\rho_2 \rho_1}{\hookrightarrow} X^3$. Then we have
\[
\II_{\mu} \cdot \II_{\mu}=P_{13!} R_{1!}S_{1!} (\Q_{l, \mbf Z})= \Pi_{12!} T_! (\Q_{l, \mbf Z}).
\]
Observe that $t$ is the quotient map of $\mbf Z$ by the group  $G_{V^2}$, thus the induced morphism $Qt: [\mbf H\backslash \mbf Z] \to [\G\backslash \mbf Z_{\Omega}(D, V^1, V^3)]$ is an isomorphism.
Hence $T_! (\Q_{l, \mbf Z})=\Q_{l,\mbf Z_{\Omega}(D, V^1, V^3)}$. Therefore,
\[
\II_{\mu} \cdot \II_{\mu}=\Pi_{12!} (\Q_{l,\mbf Z_{\Omega}(D, V^1, V^3)} )= \pi_{12!} (\Q_{l,\mbf Z_{\Omega}(D, V^1, V^3)} )=\II_{\mu}.
\]
It is clear that $\II_{\mu} \cdot \II_{\mu'}=0$ if $\mu\neq \mu'$ from the above argument. The lemma follows.
\end{proof}

\begin{lem} 
For any $\mu$ and $\mu'$, we have  
\begin{alignat*}{3}
 \EE^{(n)}_{\mu,\mu-n\alpha_i} \II_{\mu'}&=\delta_{\mu-n\alpha_i,\mu'} \EE^{(n)}_{\mu, \mu-n\alpha_i},\quad 
& \II_{\mu'} \EE^{(n)}_{\mu, \mu-n\alpha_i} &=\delta_{\mu', \mu} \EE^{(n)}_{\mu, \mu-n\alpha_i}; \\
\FF^{(n)}_{\mu, \mu +n\alpha_i} \II_{\mu'}&=\delta_{\mu+n\alpha_i,\mu'} \FF^{(n)}_{\mu, \mu+n\alpha_i}, \quad
&\II_{\mu'} \FF^{(n)}_{\mu, \mu+n \alpha_i} &= \delta_{\mu', \mu} \FF^{(n)}_{\mu, \mu+n\alpha_i}.
\end{alignat*}
\end{lem}

This lemma  can be  proved in exactly the same way as the proof of Lemma ~\ref{I-I}.

\begin{lem}
\label{E-i-j}
$\EE_{\mu -\alpha_j+\alpha_i,\mu -\alpha_j} \FF_{\mu -\alpha_j,\mu}=  \FF_{\mu +\alpha_i-\alpha_j,\mu +\alpha_i} \EE_{\mu+\alpha_i,\mu} $, for any $i\neq j$.
\end{lem}

\begin{proof}
Fix four $I$-graded vector spaces $V^s$ for $s=1, 2, 3, 4$ such that
\begin{equation}
\label{E-F-i-j}
\dim V^1 =\nu +j-i, \quad \dim V^2 =\nu +j, \quad \dim V^3 = \nu \quad\mbox{and} \quad 
\dim V^4 =\nu-i.
\end{equation}
Let 
\begin{equation*}
\mbf Z_1
=\{ (X^1, X^2, X^3, \rho_1, \rho_2) | (X^1, X^2, X^3)\in \mbf E_{\Omega}(D, V^1, V^2, V^3), X^1 \overset{\rho_1}{\hookrightarrow} X^2 \overset{\rho_2}{\hookleftarrow} X^3\},
\end{equation*}
and $\pi_1$ be the projection from $\mbf Z_1$ to $\mbf E_{\Omega}(D, V^1, V^2, V^3)$. 
Then an argument similar to the proof of Lemma ~\ref{I-I} yields that 
\[
\EE_{\mu -\alpha_j+\alpha_i,\mu -\alpha_j} \FF_{\mu -\alpha_j,\mu}=P_{13!} \Pi_{1!} (\Q_{l,\mbf Z_1}) [m],
\]
where
$m=e_{\mu-\alpha_j+\alpha_i, \alpha_i}+f_{\mu-\alpha_j, \alpha_j}$.
Denote by $\check V$ the $I\backslash \{j\}$-graded vector space obtained from $V$ by deleting the component $V_j$. Let 
$\mbf Z$ be the variety of quadruples  $( X^1, X^3,  \check \rho: \check V^1\hookrightarrow  \check V^3, \sigma_j: V^3_j \hookrightarrow V^1_j)$, where  
$(X^1, X^3)\in \mbf E_{\Omega}(D, V^1, V^3)$, such that all the diagrams  incurred in the quadruples are commutative, i.e.,
\[
x^3_h \check \rho_{ h'}=\check \rho_{ h''} x^1_h, \; \mbox{if}\;  \{h', h''\} \neq j;\;
x_h^1 = \sigma_j x^3_h \check \rho_{ h'},\; \mbox{if} \; h''=j;\; \mbox{and}\; 
\sigma_j x^1_h = x^3_h \check \rho_{h'},\; \mbox{if} \; h'=j.
\]
Define a morphism of varieties
\[
r_{13}: \mbf Z_1\to \mbf Z
\]
by $r_{13}(X^1, X^2, X^3, \rho_1, \rho_2) = (X^1, X^3, \check \rho, \sigma_j)$ where 
$\check \rho_{k} = \rho_{2, k}^{-1} \rho_{1, k}$ for any $k\in I\backslash\{ j\}$ and $\sigma_j= \rho_{1, j}^{-1} \rho_{2,j}$.
Then, we have $p_{13} \pi_2= \pi r_{13}$, where $\pi$ is the projection from $\mbf Z$ to $\mbf E_{\Omega}(D, V^1, V^3)$. Moreover,
we observe that $r_{13}$ is a quotient map of $\mbf Z_1$ by $G_{V^2}$. From these facts, we have
\begin{equation}
\label{E-F-LHS}
\EE_{\mu -\alpha_j+\alpha_i,\mu -\alpha_j} \FF_{\mu -\alpha_j,\mu}=P_{13!} \Pi_{1!} (\Q_{l,\mbf Z_1})[m]
=\Pi_! R_{13!} (\Q_{l,\mbf Z_1}) [m]=\Pi_! (\Q_{l, \mbf Z})[m].
\end{equation}

On the other hand, let 
\begin{equation*}
\mbf Z_2
=\{ (X^1, X^4, X^3, \rho_1, \rho_2) | (X^1, X^4, X^3)\in \mbf E_{\Omega}(D, V^1, V^4, V^3), X^1 \overset{\rho_1}{\hookleftarrow} X^4 \overset{\rho_2}{\hookrightarrow} X^3\}
\end{equation*}
and $\pi_2$ be the projection from $\mbf Z_2$  to $\mbf E_{\Omega}(D, V^1, V^4, V^3)$. Define a morphism $\tilde r_{13}: \mbf Z_2\to \mbf Z$ by 
$\tilde r_{13} (X^1, X^4, X^3, \rho_1, \rho_2)=(X^1, X^3, \check \rho, \sigma_j)$ where 
$\check \rho_{ k} = \rho_{2,k} \rho_{1,k}^{-1}$ for any $k\in I\backslash \{j\}$ and 
$\sigma_j= \rho_{1,j} \rho_{2,j}^{-1}$.
Then we have $\tilde p_{13} \pi_2 = \pi \tilde r_{13}$, where $\tilde p_{13}$ is the projection from $\mbf Z_2$ to $\mbf E_{\Omega}(D, V^1, V^3)$, 
moreover $\tilde r_{13}$ is quotient map of $\mbf Z_2$ by $G_{V^4}$. From these facts, we get
\begin{equation}
\label{E-F-RHS}
 \FF_{\mu +\alpha_i-\alpha_j,\mu +\alpha_i} \EE_{\mu+\alpha_i,\mu} =
 \tilde P_{13!} \Pi_{2!} (\Q_{l,\mbf Z_2})[m']  = \Pi_! \tilde R_{13!} (\Q_{l,\mbf Z_2}) [m']=\Pi_! (\Q_{l, \mbf Z})[m'],
\end{equation}
where $m'=f_{\mu+\alpha_i-\alpha_j,\alpha_j} +e_{\mu+\alpha_i,\alpha_i}$.
By (\ref{E-F-LHS}), (\ref{E-F-RHS}) and the fact that $m=m'$, we have
the lemma.
\end{proof}

\begin{lem}  
\label{E-F-i} 
Let $\nu(i)=d_i +\sum_{h\in H: h'=i} \nu_{h''}$. For any vertex $i\in I$, 
\[
\EE_{\mu, \mu-\alpha_i} \FF_{\mu-\alpha_i, \mu} \oplus \bigoplus_{p=0}^{\nu_i -1} \II_{\mu} [\nu(i) -1-2p] =
\FF_{\mu, \mu+\alpha_i} \EE_{\mu+\alpha_i,\mu} \oplus \bigoplus_{p=0}^{\nu(i) -\nu_i-1}\II_{\mu}[\nu(i)-1-2p].
\]

\end{lem}

\begin{proof}
Due to Lemma ~\ref{generator-Fourier},  we may assume that $i$ is a source in $\Omega$.  Let us fix four $I$-graded vector spaces, $V^a$, for $a=1, 2, 3, 4$,  such that
\begin{equation}
\label{E-i-i}
\dim V^1=\dim V^3=\nu, \quad \dim V^2=\nu+i \quad \mbox{and} \quad \dim V^4=\nu-i.
\end{equation}
Then  we have 
\[
\EE_{\mu,\mu -\alpha_i} \FF_{\mu -\alpha_i,\mu}=P_{13!} \Pi_{1!} (\Q_{l,\mbf Z_1}) [m]
\quad \mbox{and} \quad
\FF_{\mu +\alpha_i-\alpha_j,\mu +\alpha_i} \EE_{\mu+\alpha_i,\mu} =
 \tilde P_{13!} \Pi_{2!} (\Q_{l,\mbf Z_2})[m],
\]
where $m=\nu(i) -1$ and  
the other notations on the right-hand sides are defined  in the proof of Lemma ~\ref{E-i-j}  with the condition (\ref{E-F-i-j}) replaced by 
(\ref{E-i-i}).

Let $\mbf Z_1^s$ be the open subvariety of $\mbf Z_1$ defined by the condition that $X^1(i)$, $X^2(i)$ and $X^3(i)$ are injective.
Similarly, we define the open subvariety $\mbf E_{\Omega}^s$ in $\mbf E_{\Omega}(D, V^1, V^3)$.

Denote by $\check X$  the element obtained from $X\in \mbf E_{\Omega}(D, V)$ by deleting any component $x_h$ such that $ h'=i$.
Let $\check {\mbf Z}_1^s$ be the variety of tuples $( X^1,\check X^2, X^3, \rho_1, \rho_2)$.
Let $\mbf Y_1$ be the variety of  tuples $(\check X^1, \check X^2, \check X^3, \check \rho_1, \check \rho_2, \mathcal V_1,\mathcal  V_2, \mathcal V_3)$, 
where $\mathcal V_1, \mathcal V_2, \mathcal V_3\subseteq D_i \oplus \oplus_{h\in \Omega: h'=i}V_{h''}$, 
such that $\mathcal V_1,\mathcal  \mathcal \mathcal V_3\subseteq \mathcal V_2$,
$\dim \mathcal V_1=\dim \mathcal V_3=\nu_i$ and
$\dim \mathcal V_2=\nu_i+1$.
Similarly, we define the variety $\X_1$ of tuples $(\check X^1, \check X^2, \check X^3, \check \rho_1, \check \rho_2, \mathcal V_1, \mathcal V_3)$ and 
the variety $\mbf W$ of tuples $(\check X^1,\check X^3, \mathcal V_1, \mathcal V_3)$.
Then we have the following cartesian diagram
\[
\begin{CD}
\mbf Z_1^s @>r_1 >>  \check {\mbf Z}_1^s @>r_2>> \mbf E_{\Omega}^s\\
@Vs_1 VV @Vs_2 VV @Vs_3VV\\
\mbf Y_1 @>r_3>> \mbf X_1 @>r_4 >> \mbf W,
\end{CD}
\]
where the $r_a$'s  are the obvious projections, and $s_2$ and $s_3$ are induced from $s_1$, which is defined by
$s_1 ( X^1, X^2, X^3,\rho_1,\rho_2) = (\check X^1, \check X^2, \check X^3, \check \rho_1, \check \rho_2, \mathcal V_1,\mathcal  V_2, \mathcal V_3)$
with 
\[
\mathcal V_2 =\mbox{im} \; (q_i^2+\sum_{h\in \Omega: h'=i} x_h^2),\quad \mbox{and}\quad
\mathcal V_a=\mbox{im} \; (q_i^a+\sum_{h\in \Omega: h'=i} \rho_{a,h''}x_h^a),\quad \forall a=1, 3.
\]
Observe that $s_1$ and $s_2$ are the quotient maps of $\mbf Z_1^s$ and $\check{\mbf Z}^s_1$ by the group 
$G_{V_i^1}\times G_{V_i^2}\times G_{V_i^3}$, respectively, and $s_3$ is the quotient map of $\mbf E^s_{\Omega}$ by  $G_{V_i^1}\times G_{V_i^3}$. Thus we have
\begin{equation}
R_{2!} R_{1!} (\Q_{l,\mbf Z_1^s}) 
=S_3^* R_{4!} R_{3!} (\Q_{l,\mbf Y_1}). 
\end{equation}
Let $\mbf Y_1^c$ be the closed subvariety of $\mbf Y_1$ defined by the condition $\mathcal V_1=\mathcal V_3$ and $\mbf Y_1^o$ be its complement.
Let $i_1: \mbf Y_1^c\to \mbf Y_1$ and $j_1:\mbf  Y_1^o\to \mbf Y_1$ be the inclusions.
Sine $r_3$ is proper and $\mbf Y_1$ is smooth,  the complex $R_{3!} (\Q_{l,\mbf Y_1})$ is semisimple. So we have 
\[
r_{3!} (\Q_{l,\mbf Y_1}) = j_{1!*}^o r^o_{3!}  (\Q_{l, \mbf Y_1^o}) \oplus r_{3!} i_{1!} (\Q_{l,\mbf Y_1^c}),
\]
where $j^o_1$ and $r_3^o$ are the morphisms $\mbf Y_1^o\overset{r_3^o} {\to} \mbf X_1^o  \overset{j_1^o}{\to} \mbf X_1$ with $\mbf X_1^o$ the image of $\mbf Y_1^o$
under $r_3$.
Observe that the morphism $r_3 i_1$ is a projective bundle of relative dimension $\nu(i)-\nu_i-1$. Thus
$r_{3!} i_{1!} (\Q_{l,\mbf Y_1^c})= \oplus_{p=0}^{\nu(i)-\nu_i -1} \Q_{l, \mbf X_1^c}[-2p]$ 
where $\mbf X_1^c$ is the closed subvariety of $\mbf X_1$ defined by the condition 
$\mathcal V_1=\mathcal V_3$.
Then, we have 
\[
R_{3!} (\Q_{l,\mbf Y_1})= J_{!*}^o R^o_{3!}  (\Q_{l, \mbf Y_1^o}) \oplus  \oplus_{p=0}^{\nu(i)-\nu_i -1} \Q_{l, \mbf X_1^c}[-2p].
\]

By combining the above analysis, we see that the restriction of the complex $\EE_{\mu,\mu -\alpha_i} \FF_{\mu -\alpha_i,\mu}$ to $\mbf E_{\Omega}^s$ 
is equal to 
\begin{equation}
\label{E-i-i-LHS}
R_{2!}R_{1!} (\Q_{l,\mbf Z_1^s}) = S_3^* R_{4!} R_{3!} (\Q_{l,\mbf Y_1})=
S_3^* R_{4!} J_{1!*}^o R^o_{3!}  (\Q_{l, \mbf Y_1^o}) \oplus  \oplus_{p=0}^{\nu(i)-\nu_i -1} S_3^* R_{4!}\Q_{l, \mbf X_1^c}[m-2p].
\end{equation}

On the other hand, we may define the  open subvarieties $\mbf Z_2^s$ of $\mbf Z_2^s$ similar to the subvariety $\mbf Z_1^s$ of $\mbf Z$. 
Then the following varieties $\check{\mbf Z}_2$,
$\mbf Y_2$ , $\mbf Y_2^c$, $\mbf Y_2^o$ and $\mbf X_2$ in the diagram below are defined in a way similar to the varieties having  subscript $1$:
\[
\begin{CD}
\mbf Z_2^s @>t_1 >>  \check {\mbf Z}_2^s @>t_2>> \mbf E_{\Omega}^s\\
@Vw_1 VV @Vw_2 VV @Vs_3VV\\
\mbf Y_2 @>t_3>> \mbf X_2 @>t_4 >> \mbf W,
\end{CD}
\]
where the morphism $w_1$ is defined by
$w_1 ( X^1, X^4, X^3,\sigma_1,\sigma_2) = (\check X^1, \check X^4, \check X^3, \check \sigma_1, \check \sigma_2, \mathcal V_1,\mathcal  V_4, \mathcal V_3)$
with $\mathcal V_4 =\mbox{im} \; (q_i^4+\sum_{h\in \Omega: h'=i} x_h^4)$  and
$\mathcal V_a=\mbox{im} \; (q_i^a+\sum_{h\in \Omega: h'=i} \sigma_{a, h''}^{-1}x_h^a)$ for any $ a=1, 3$.

Let $i_2: \mbf Y_2^c\hookrightarrow \mbf Y_2\hookleftarrow \mbf Y_2^o: j_2 $ be the inclusions. Then we see that $t_3i_2$ is a projective bundle of relative dimension $\nu_i-1$.
An argument similar to the proof of (\ref{E-i-i-LHS}) shows that the restriction of the complex $\FF_{\mu +\alpha_i-\alpha_j,\mu +\alpha_i} \EE_{\mu+\alpha_i,\mu}$
to $\mbf E^s_{\Omega}$ is equal to 
\begin{equation}
\label{E-i-i-RHS}
T_{2!}T_{1!} (\Q_{l,\mbf Z_2^s}) = S_3^* T_{4!} T_{3!} (\Q_{l,\mbf Y_2})=
S_3^* T_{4!}  J^o_{2!*} T^o_{3!}  (\Q_{l, \mbf Y_2^o}) \oplus  \oplus_{p=0}^{\nu_i -1} S_3^* T_{4!}\Q_{l, \mbf X_2^c}[m-2p],
\end{equation}
where $j^o_2$ and $t_3^o$ are the morphisms $\mbf Y_2^o\overset{t_3^o} {\to} \mbf X_2^o  \overset{j_2^o}{\to} \mbf X_2$ with $\mbf X_2^o$.
Finally, observe that there are isomorphisms $\mbf Y_1^o\simeq \mbf Y_2^o$, $\mbf X^c_1\simeq \mbf X^c_2$ and moreover the complex
$S_3^* T_{4!}(\Q_{l, \mbf X_2^c})$ is the restriction of $\II_{\mu}$ to $\mbf E^s_{\Omega}$. 
The lemma follows by comparing (\ref{E-i-i-LHS}) and (\ref{E-i-i-RHS}) and using 
the observations.
\end{proof}

\begin{lem} 
\label{Serre}
For any $i\neq j\in I$,  let $m=1-i\cdot j$. We have
\begin{equation*}
\begin{split}
&\bigoplus_{\substack{0\leq p\leq m \\ p \; even }}  
\EE^{(m-p)}_{\mu^3, \mu^2} 
\EE_{\mu^2, \mu^1} 
\EE^{(p)}_{\mu^1, \mu} 
=\bigoplus_{\substack{0\leq p\leq m\\ p \; odd} }  
\EE^{(m-p)}_{\mu^3,  \mu^2} 
\EE_{\mu^2, \mu^1} 
\EE^{(p)}_{\mu^1, \mu};\\
&\bigoplus_{\substack{0\leq p\leq m\\p \; even}  }  
\FF^{(p)}_{\mu,  \mu^1} 
\FF_{\mu^1,  \mu^2} 
\FF^{(m-p)}_{\mu^2,  \mu^3} 
=
\bigoplus_{\substack{0\leq p\leq m\\ p \; odd}  }  
\FF^{(p)}_{\mu,  \mu^1} 
\FF_{\mu^1,  \mu^2} 
\FF^{(m-p)}_{\mu^2,  \mu^3};
\end{split}
\end{equation*}
where 
$\mu^1=\mu+p\alpha_i$, 
$\mu^2= \mu+p\alpha_i+\alpha_j$, and
$\mu^3=\mu+m\alpha_i+\alpha_j$.
\end{lem}

\begin{proof}
Without lost of generality, we assume that $i$ is a source in $\Omega$. For $a=1, 2, 3, 4$, let $V^a$  be the $I$-graded vector spaces such that
\[
\dim V^1 =\nu-mi-j, \quad 
\dim V^2 =\nu -p i-j,\quad
\dim V^3=\nu-pi \quad \mbox{and} \quad
\dim V^4= \nu.
\]
Let  $\mbf Z$  be the variety of the data $(X^1 \overset{\rho_1}{\hookrightarrow} X^2\overset{\rho_2}{\hookrightarrow} X^3 \overset{\rho_3}{\hookrightarrow} X^4)$ where $X^a\in \mbf E_{\Omega}(D, V^a)$ for $a=1, 2, 3, 4$.  
Let $\pi: \mbf Z\to  \mbf E_{\Omega}(D, V^1,V^4)$ be the obvious projection.
Then 
\[
\EE^{(m-p)}_{\mu^3, \mu^2} 
\EE_{\mu^2, \mu^1} 
\EE^{(p)}_{\mu^1, \mu} 
=\Pi_! (\Q_{l, \mbf Z})[s_{m-p}],
\]
where 
$s_{m-p}=m(\nu(i)-\nu_i) +(d_j +\sum_{h\in \Omega: h'=j}\nu_{h''} -\nu_j)+(m-p) ( 1-(m-p))$.
Moreover, $\pi$ factors through $\mbf Z_{\Omega}(D, V^1, V^4)$, where the map $r$ from $\mbf Z$ to $\mbf Z_{\Omega}(D, V^1, V^4)$ is given by
$r (X^1 \overset{\rho_1}{\hookrightarrow} X^2\overset{\rho_2}{\hookrightarrow} X^3 \overset{\rho_3}{\hookrightarrow} X^4)=(X^1\overset{\rho_3\rho_2\rho_1}{\hookrightarrow} X^4)$.
Let $B_{m-p}=R_! (\Q_{l, \mbf Z})$.
Thus,
\[
\EE^{(m-p)}_{\mu^3, \mu^2} 
\EE_{\mu^2, \mu^1} 
\EE^{(p)}_{\mu^1, \mu} 
=\Pi_{!} (\Q_{l, \mbf Z_p})[s_{m-p}]
=\Pi_{12!} B_{m-p}[s_{m-p}].
\]
The identity for the $\EE$'s  is reduced to show that 
\begin{equation}
\label{even-odd}
\bigoplus_{\substack{0\leq p\leq m\\ p\; even}} B_{m-p}[(m-p) ( 1-(m-p))] =\bigoplus_{\substack{0\leq p\leq m \\ p \; odd}} B_{m-p}[(m-p)(1-(m-p))].
\end{equation}
This is shown in ~\cite[2.5.8]{Zheng08}.
For the sake of completeness, let us reproduce here.
Let $\mbf Z^s$ be the open subvariety of $\mbf Z$ defined by the condition that $X^a(i)$ are injective for $a=1, 2, 3, 4$.  The variety $\mbf Z^s_{\Omega}(D, V^1, V^4)$ is defined similarly.  Let $s: \mbf Z^s\to \mbf Z^s_{\Omega}(D, V^1, V^4)$ be the restriction of $r$ to $\mbf Z^s$ and 
$C_{m-p}:= S_!(\Q_{l,\mbf Z^s})$.
 Then the condition on the localization implies that we only need to show the
identity (\ref{even-odd}) when we restrict the complexes involved  to the variety $\mbf Z^s_{\Omega}(D, V^1, V^4)$, 
i.e., to show that  (\ref{even-odd}) holds with the complexes $B_{m-p}$ replaced by 
the complexes $C_{m-p}$.

Observe that the group $G_{V^2}\times G_{V^3}\times \mrm{GL}(V^1_i)\times \mrm{GL}(V^4_i)$ acts freely on $\mbf Z^s$ and the quotient  variety $\mbf Y$ is the variety 
of tuples $(\check X^1\overset{\rho}{\hookrightarrow} \check X^4, \mathcal V_1, \mathcal V_2, \mathcal V_3)$ 
where $\mathcal V_1,\mathcal V_2 \subseteq V^1(i)$, $ \mathcal V_3\subseteq V^4(i))$ such that 
$\mathcal V_1\subseteq V_2$ and $\rho(\mathcal V_2)\subseteq \mathcal V_3$; and $\dim \mathcal V_1=\nu_i -m$, $\dim \mathcal V_2=\nu_i -p$ and $\mathcal V_3=\nu_i$.
Moreover, the group $\mrm{GL}(V^1_i)\times \mrm{GL}(V^4_i)$ acts freely on $\mbf Z^s_{\Omega}(D, V^1, V^4)$ and its quotient variety $\mbf X$ 
is the variety obtained from $\mbf Y$ 
by deleting $\mathcal V_2$ and replacing the condition $\rho(\mathcal V_2)\subseteq \mathcal V_3$ by $\rho(\mathcal V_1)\subseteq \mathcal V_3$.
Let $t: \mbf Y\to \mbf X$ be the projection. Let $A_{m-p}=T_!(\Q_{l, \mbf Y})$. To show (\ref{even-odd}), it reduces to show that the identity holds 
with the complexes $B_{m-p}$ replaced by 
the complexes $A_{m-p}$.

Now define a partition $(\mbf X_n)_{n=0}^m$ of $\mbf X$ such that elements in $\mbf X_n$ satisfying  the condition that 
$\dim \rho(V^1(i))\cap \mathcal V_3= \nu_i -m+n$.  Let $\mbf Y_n= t^{-1}(\mbf X_n)$, and $t_n:\mbf  Y_n\to\mbf  X_n$ be the restriction of $t$ to $\mbf Y_n$. 
Then the restriction of $r$ to $\mbf Y_n$ has fiber at any point of $\mbf X_n$
isomorphic to the Grassmannian $\mrm{Gr}(m-p, n)$ of $(m-p)$-subspaces in  $n$-space.
By the property of the  cohomology of $\mrm{Gr}(m-p, n)$, we have  
\[
T_{n!} (\Q_{l,\mbf Y_n}) = \oplus_{\kappa} \Q_{l,\mbf X_n} [-2\sum_{a=1}^{m-p} (\kappa_a-a)],
\] 
where 
$\kappa$ runs through the sequences $(1\leq \kappa_1< \kappa_2<\cdots < \kappa_{m-p} \leq n)$.
Since the complexes $A_{m-p}$ are semisimple, it suffices to show that  (\ref{even-odd})  holds when restricts to the strata $X_n$ for all $n$, which is left to show that 
\[
\bigoplus_{\substack{ 0\leq p\leq n\\ p\; even}} \oplus_{\kappa} \Q_{l,\mbf X_n} [-2\sum_{a=1}^{p} (\kappa_a-a)+p(1-p)]=
\bigoplus_{\substack{0\leq p\leq n \\ p\; odd}} \oplus_{\kappa} \Q_{l,\mbf X_n} [-2\sum_{a=1}^{p} (\kappa_a-a)+p(1-p)].
\]
To any sequence $\kappa=(1\leq \kappa_1< \cdots <\kappa_p\leq n)$ of odd length, attached a sequence $\kappa'$ of even length  by 
$\kappa'_a=\kappa_{a+1} $ for $a=1,\cdots, p$,  if $\kappa_1=1$;
$\kappa'_1=1$ and  $\kappa'_{a+1}=\kappa_a$ for $a=1,\cdots, p$ if $\kappa_1\neq 1$.
This defines a  bijection between the set of sequences $\kappa$ of even length and the set of sequences $\kappa$ of odd length and it is clear that the shifts on both sides
are the same under this bijection. Thus the identity holds.
The identity for the $\FF$'s can be proved similarly.
\end{proof}

\begin{lem} For any $i\in I$ and $m\in \mbb N$,  we have
\label{EE=E}
\begin{equation*}
\begin{split}
&\EE_{\mu+(m+1)\alpha_i, \mu+m\alpha_i} \EE^{(m)}_{\mu+m \alpha_i, \mu} = \bigoplus_{0\leq p\leq m} \EE^{(m+1)}_{\mu+(m+1)\alpha_i, \mu} [m-2p];\\
&\FF_{\mu-(m+1)\alpha_i, \mu-m\alpha_i} \FF^{(m)}_{\mu-m \alpha_i, \mu} = \bigoplus_{0\leq p\leq m} \FF^{(m+1)}_{\mu-(m+1)\alpha_i, \mu} [m-2p].
\end{split}
\end{equation*}
\end{lem}

\begin{proof}
Fix three $I$-graded vector spaces $V^a$ for $a=1, 2,3$ such that $\dim V^1 =\nu-(m+1)i$, $\dim V^2=\nu-mi$ and $\dim V^3=\nu$. Let
$\mbf Z_1=\mbf Z_{\Omega}(D, V^1, V^3)$ and  $\mbf Z$ be the variety of tuples $(X^1, X^2, X^3, \rho_1, \rho_2)$, where $(X^1, X^2, X^3)\in \mbf E_{\Omega}(D, V^1, V^2, V^3)$, such that $X^1\overset{\rho_1}{\hookrightarrow} X^2 \overset{\rho_2}{\hookrightarrow} X^3$.
Let $t: \mbf Z \to \mbf Z_1$ be the map defined by $t(X^1, X^2, X^3, \rho_1, \rho_2)=(X^1, X^3, \rho_2\rho_1: X^1\to X^3)$. As in the proof of Lemma \ref{E-F-i}, we have 
$T_! (\Q_{l,\mbf Z} )=\oplus_{p=0}^{m} \Q_{l,\mbf Z_1}[-2p]$.  So
\begin{equation*}
\begin{split}
\EE&_{\mu+(m+1)\alpha_i, \mu+m\alpha_i}  \EE^{(m)}_{\mu+m \alpha_i, \mu}=\Pi_{12!} T_! (\Q_{l,\mbf Z}) [e_{\mu+(m+1)\alpha_i, \alpha_i}+e_{\mu+m\alpha_i, m \alpha_i} ]\\
&=  \oplus_{p=0}^m    \Pi_{12!}  (\Q_{l,\mbf Z_1})[e_{\mu+(m+1)\alpha_i, \alpha_i}+e_{\mu+m\alpha_i, m \alpha_i}-2p]=
\oplus_{p=0}^m \EE^{(m+1)}_{\mu+(m+1)\alpha_i, \mu}  [m-2p].
\end{split}
\end{equation*}
The proof for the $\FF$'s is similar.
\end{proof}

By specializing the shift $[z]$ to $v^z$ for any $z\in \mbb Z$, the identities  in Lemmas ~\ref{I-I}-\ref{EE=E} 
become  the defining relations of the integral form $_{\mbb A}\! \dot{\U}$.
In short, we have

\begin{prop}
\label{U-relations}
The complexes  $\II_{\mu}$, $\EE^{(n)}_{\mu, \mu-n \alpha_i}$ and $\FF^{(n)}_{\mu, \mu +n \alpha_i}$ 
satisfy the defining relations of the integral form $_{\mbb A}\! \dot{\U}$ defined in Section ~\ref{modified-quantum}.
\end{prop}

\begin{rem}
In many respects, if not all,  the proof of Proposition  ~\ref{U-relations} is very similar to that of ~\cite[Theorem  2.5.2]{Zheng08}
(see also Proposition ~\ref{Psi-relation} in this paper).
\end{rem}

\section{Algebra $\KK_d$}

\subsection{Complex $K_{\bullet}$}
\label{Kdot}

Consider the complexes of the form 
\begin{equation}
\label{complex}
K_{\bullet} =K_1 \cdot K_2  \cdot  ... \cdot K_m \quad \in \mathscr D^-_{\G}(\mbf E_{\Omega}(D, V^1, V^2)),
\end{equation}
where the $K_a$'s are either $\EE^{(n)}_{\mu', \mu}$ or $\FF^{(n)}_{\mu', \mu}$ in Section ~\ref{Generator}.

\begin{prop}
\label{boundedness}
The complexes $K_{\bullet}$ in (\ref{complex}) are bounded.
\end{prop}

\begin{proof}
For  any pair $(\mbf{i, a})$ of sequences, where $\mbf i=(i_m,\cdots,i_1)\in I^m$ and $\mbf a=(a_m,\cdots, a_1)\in \mbb N^m$, we write
\[
\EE_{(\mbf{i, a}), \mu} =\EE^{(a_m)}_{\mu,\mu^{m-1}} \cdots \EE^{(a_2)}_{\mu^2,\mu^1} \EE^{(a_1)}_{\mu^1,\mu^0}
\quad \mbox{and}\quad 
\FF_{\mu, (\mbf{i, a})}=
\FF^{(a_1)}_{\mu^0,\mu^1} \cdots \FF^{(a_{m-1})}_{\mu^{m-2},\mu^{m-1}} \FF^{(a_m)}_{\mu^{m-1},\mu},
\]
such that 
$\mu^l -\mu^{l-1}= a_l \alpha_{i_l}$ for $l=1,\cdots, m$. By Lemmas ~\ref{E-i-j} and ~\ref{E-F-i}, 
it suffices to show the boundedness of the complex $K_{\bullet}$ if
$K_{\bullet}$ is of the form $\FF_{\mu, (\mbf{j, b})}\EE_{(\mbf{i,a}), \mu}$ for any two pairs $(\mbf{i, a})$ and $(\mbf{j, b})$.
An argument similar to the proof of Lemma ~\ref{Serre} yields that 
\begin{align}
\label{boundedness-1}
\FF_{\mu, (\mbf{j, b})}\EE_{(\mbf{i,a}), \mu}=\Pi_! (\Q_{l, \mbf Z})[m], 
\end{align}
for some $m$, where 
$\pi$ is the projection from $\mbf Z$ to the variety
$\mbf E_{\Omega}(D, V^1, V^3)$ with the dimensions of $V^1 $ and $V^3$ determined by the pairs of sequences, and 
$\mbf Z$ is the variety of the data $( X^1 \overset{\rho_1}{\hookleftarrow} X^2 \overset{\rho_2}{\hookrightarrow} X^3)$ together with 
a pair $(U^s, W^t)_{1\leq s\leq m-1, 1\leq t\leq n-1} $ of $I$-graded partial flags of $V^1$ and $V^3$,
where the dimensions of $U^s$ and $W^t$ are determined by the $\mu$'s in the pairs $(\mbf{i, a}))$ and $(\mbf {j, b})$, respectively, 
 such that 
\[
\rho_1(X^2) \subseteq U^{n-1} \subseteq U^{n-2}\subseteq \cdots \subseteq U^1\subseteq V^1,
\quad \rho_1(X^2) \subseteq W^{n-1} \subseteq \cdots \subseteq W^1\subseteq V^3,\]
and  $U^s$ and $W^t$ are invariant under  $X^1$ and $X^3$, respectively, for any $s$ and $t$.
The morphism $\pi$ factors through the following varieties
\[
\mbf Z \overset{\pi_1}{\to} \mbf Z_1 \overset{\pi_2}{\to} \mbf Z_2 \overset{\pi_3}{\to} \mbf E_{\Omega} (D, V^1, V^3),
\]
where 
$\mbf Z_1$ is the variety obtained from $\mbf Z$ by forgetting the maps $\rho_2$, the variety $\mbf Z_2$ is the quotient variety of $\mbf Z_1$ by the group $G_{V^2}$
and the morphisms are clearly defined. 
It is clear that the functors $\Pi_{1!}$, $\Pi_{2!}$ and $\Pi_{3!}$ send bounded complexes to bounded complexes and  $\Pi_!=\Pi_{3!} \Pi_{2!} \Pi_{1!}$. The proposition follows. 
\end{proof}

\begin{lem}
\label{K-semisimple}
If $\II_{\mu}$ is semisimple, then $K_{\bullet}$ is semisimple. 
\end{lem}

\begin{proof}
 
Since $\FF_{\mu, (\mbf{j, b})}\EE_{(\mbf{i,a}), \mu} = \FF_{\mu, (\mbf{j, b})}\II_{\mu} \EE_{(\mbf{i,a}), \mu}$, we still have 
(\ref{boundedness-1}) with $\mbf Z$ replaced by 
\[
\mbf Y=\{  (X^1 \overset{\rho_1}{\hookleftarrow} X^2 \overset{\rho}{\hookrightarrow} X^4  \overset{\rho_2}{\hookrightarrow} X^3; (U^s, W^t)_{s,t})\},
\]
where  $\rho$ is an isomorphism and $(X^2, X^4)\in \mbf E_{\Omega}(D, V^2, V^4)$. 
Consider the following cartesian diagram
\[
\begin{CD}
\mbf Y @>>> \mbf Z_{\Omega}\\
@V\pi'_1 VV @V\pi_{12} VV\\
\mbf Y_1 @>p>> \mbf E_{\Omega} (D, V^2, V^4),
\end{CD}
\]
where $\mbf Y_1 =\{  (X^1 \overset{\rho_1}{\hookleftarrow} X^2 , X^4  \overset{\rho_2}{\hookrightarrow} X^3;  (U^s, W^t)_{s,t}) \}$ and the horizontal maps are projections. In particular, 
$p(X^1 \overset{\rho_1}{\hookleftarrow} X^2 , X^4  \overset{\rho_2}{\hookrightarrow} X^3; (U^s, W^t)_{s,t}) =  (X^2,  X^4)$.  By Base change theorem, we have 
\[
\pi'_{1!} (\Q_{l, \mbf Y}) = p^* (\II_{\mu}).
\]
Since $p$ is smooth and, by assumption, $\II_{\mu}$ is semisimple, we see that $\pi'_{1!} (\Q_{l, \mbf Y}) $ is semisimple.

Let $\mbf Y_2= \mbf Y_1/G_{V^2}\times G_{V^4} $. Then $\pi$ is the composition of the following morphisms
\[
\mbf Y \overset{\pi'_1}{\to} \mbf Y_1\overset{\pi'_2}{ \to} \mbf Y_2 \overset{\pi_3'}{\to} \mbf E_{\Omega}(D, V^1, V^3). 
\]
where $\pi'_2$ is a quotient map and $\pi_3'$ is a proper map. Again $\Pi_!$ is the composition of $\Pi_{1!}'$, $\Pi_{2!}'$ and $\Pi_{3!}'$. 
From this and that $\pi'_{1!} (\Q_{l, \mbf Y}) $ is semisimple, we see that $\FF_{\mu, (\mbf{j, b})}\EE_{(\mbf{i,a}), \mu} $ is semisimple. 
\end{proof}

\begin{conj}
\label{I-semisimple}
$\II_{\mu}$ is a simple perverse sheaf, up to a shift,  for any $\mu$. 
\end{conj}

This conjecture is partially proved in Section ~\ref{TypeA}. It is proved in full generality in ~\cite{W12} in a closely related setting.

\subsection{Category $\M_d$}

Let $\M_{d, \nu^1,\nu^2}$ be the full subcategory of $\DD^b_{\G}(\mbf E_{\Omega}(D, V^1, V^2) )$ whose objects are finite direct sums of shifts of the complexes $K_{\bullet}$
in  Section ~\ref{Kdot}. We set 
\[
\M_d =\oplus_{\nu^1,\nu^2\in\mbb N[I]} \M_{d, \nu^1,\nu^2}.
\]
It is clear that $\M_d$ is closed under the convolution product  ``$\cdot$'' in (\ref{cdot}).
Let 
$\LL_d$ be the Grothendieck group of the category $\M_d$. By definition,  this 
is free $\mbb A$-module  spanned  by the isomorphism classes of objects in $\M_d$ and subject to the relation that
$[A\oplus B]=[A] + [B]$  and $[A[1]] =v[A]$ for any objects  $A, B\in \M_d$.
By Lemma ~\ref{C-convolution} and Proposition ~\ref{associative},  the space $\LL_d$ is an associative algebra, where 
the multiplication on $\LL_d$ is descended from the convolution product ``$\cdot$'' in (\ref{cdot}).
By  Proposotion ~\ref{U-relations},

\begin{thm} 
\label{main}
There  is a unique surjective  $\mbb A$-algebra homomorphism
\[
\Psi_d: \, _{\mbb A}\! \dot{\U} \to \LL_d,
\]
sending
$1_{\mu}$,  $E^{(n)}_{\mu+\alpha_i, \mu}$ and  $F^{(n)}_{\mu -\alpha_i,\mu}$, 
to 
$\II_{\mu}$, $\EE^{(n)}_{\mu,\mu-n\alpha_i}$ and $\FF^{(n)}_{\mu,\mu+n\alpha_i}$, respectively,
 for any  $i \in I$ and  $\mu \in \X$.
 \end{thm}
 
 If $\Omega'$ is a second orientation of the graph $\Gamma$, we can define a similar category $\M_d^{\Omega'}$ and its Grothendieck group  $\LL_d^{\Omega'}$.
We have an equivalence of categories:
\[
\Phi_{\Omega}^{\Omega'}: \M_d\to\M_d^{\Omega'}.
\]
 The following proposition follows from  Lemma ~\ref{Fourier-convolution} and Lemma ~\ref{generator-Fourier},

 \begin{prop} 
 We have an  isomorphism of algebras
 $\Phi_d: \LL_d\to \LL_d^{\Omega'}$ sending  $\II_{\mu}$, $\EE^{(n)}_{\mu,\mu-n\alpha_i}$ and $\FF^{(n)}_{\mu,\mu+n\alpha_i}$ to the respective elements
 in $\LL^{\Omega'}_d$. 
 \end{prop}

The algebra $\LL_d$ should be a generalized $q$-Schur algebra (\cite{D03}) when the graph $\Gamma$ is of finite type.

\subsection{Category  $\C_d$}

Let $\BB_{d, \nu^1,\nu^2}$ be the set of isomorphism classes of simple perverse sheaves on 
$\DD^b_{\G}(\mbf E_{\Omega}(D, V^1, V^2) )$ appeared as simple subquotients in $^p\!H^s (K_{\bullet})$ for any $s\in \mbb Z$, where
$K_{\bullet}$ is in (\ref{complex}) and $^p\!H^i(-)$ is the perverse cohomology functor. 
We set 
\[
\BB_d =\sqcup_{\nu^1,\nu^2\in \mbb N[I]} \BB_{d,\nu^1,\nu^2}.
\]
Let $\C_{d, \nu^1,\nu^2}$ be the full subcategory of $\DD^b_{\G}(\mbf E_{\Omega}(D, V^1, V^2) )$ consisting of  semisimple objects $K$ such that 
the isomorphism classes of  simple summands of $^p\!H^s (K)$ are in $\BB_{d,\nu^1,\nu^2}$.
We set
\[
\C_d =\oplus_{\nu^1,\nu^2\in\mbb N[I]} \C_{d, \nu^1,\nu^2}.
\]
By Proposition ~\ref{K-semisimple},  we have 

\begin{lem}
\label{C-convolution}
Assume that $\II_{\mu}$ is semisimple for any $\mu$, then $\M_d\subseteq \C_d$ and, moreover, 
the category $\C_d$ is closed under the convolution product ``$\cdot$'' in (\ref{cdot}).
\end{lem}

Let $\KK_d$ be the Grothendieck group of the category $\C_d$.  
We see that $\BB_d$ is a basis of $\KK_d$.
By  Lemma ~\ref{C-convolution},

\begin{cor} 
\label{main-corollary}
Assume that $\II_{\mu}$ is semisimple for any $\mu$, then $\KK_d$ is an associative algebra with multiplication induced from the convolution product
``$\cdot$'' in (\ref{cdot}). Moreover,  the algebra $\LL_d$ is a subalgebra of $\KK_d$.
 \end{cor}

\begin{conj}
\label{conjecture-algebra}
We conjecture that $\LL_d=\KK_d$
and the image of $\dot{\B}$ under $\Psi_d$ is $\mathscr B_d$ with possible shifts, i.e.,
for any $b\in \dot{\B}$, $\Psi^{\Omega}_d (b) =v^{s(b)} [K]$ for some $s(b)\in \mbb Z$ and $[K]\in \mathscr B^{\Omega}_d$.
\end{conj}

We shall prove this conjecture partially in Section ~\ref{TypeA}.
In general, the identity $\LL_d=\KK_d$  is likely to be proved by the analysis of the geometry on the sink vertex and in a way similar to the proof of 
Lemma 3.10 in ~\cite{L03}.

Let $V_{\underline \lambda}=V_{\lambda_1}\otimes\cdots\otimes V_{\lambda_n}$ be the tensor product of the irreducible integrable representations of $\dot{\U}$ 
with highest weights $\lambda_1$, $\cdots$,  $\lambda_n$ in $\X^+$. Denote by $\DD_{\underline \lambda}$  the full subcategory of 
$\oplus_{V} \DD^-_{\G}(\mbf E_{\Omega}(D, V) )$ such that its  Grothendieck group $\mathscr V_{\underline \lambda}$  
is isomorphic to the integral form of  $V_{\underline \lambda}$ (see ~\cite{Zheng08}).
Then the bifunctor (\ref{action}) gives rise to a bifunctor
$\C_d \times \DD_{\underline \lambda} \to \DD_{\underline \lambda}$  by restriction, which descends to a bilinear map
\[
\circ : \KK_d \times \mathscr V_{\underline \lambda} \to \mathscr V_{\underline \lambda}.
\]
Let $\BB_{\underline \lambda}$ be the set of all isomorphism classes of simple perverse sheaves appearing in  $\DD_{\underline \lambda}$.
We then have
\[
a\circ b =\sum_{c\in \BB_{\underline \lambda}} s_{a, b}^c c, \quad \mbox{where} \; s_{a, b}^c\in \mbb N[v, v^{-1}],
\]
for any $a\in \BB_d$ and $b\in \BB_{\underline \lambda}$. From this, we have 

\begin{cor}
If  Conjectures ~\ref{I-semisimple} and ~\ref{conjecture-algebra}     hold, then the action of the canonical basis elements in $\dot{\U}$ 
on the canonical basis elements in $V_{\underline \lambda}$ has structure constants  in $\mbb N[v,v^{-1}]$ with respect to the canonical basis in $V_{\underline \lambda}$.
\end{cor}

 It has been proved in ~\cite{Zheng08} that the generators of the quantum group attached to $\Gamma$ act positively on the canonical basis of the representation $V_{\underline \lambda}$.

\section{Relation with the work  ~\cite{Zheng08}}

\subsection{Functor $\T$}

Let  $\FF^-_{\Omega, \G}(D, V^1, V^2)$ be the category of functors from the category $\DD^-_{\G}(\mbf E_{\Omega}(D, V^1))$ to $\DD^-_{\G}(\mbf E_{\Omega}(D, V^2))$. 
Similar to the functors $P_{st!}$ and $P_{st}^*$, we define ($\forall s=1, 2$)
\begin{equation}
\label{Pi}
\begin{split}
P_{s!} = Q\circ Qp_{s!} \circ Q_! , \;  P_s^* = Q\circ Qp_s^* \circ Q_!.
\end{split}
\end{equation}

Define a functor
\[
\Theta_{\Omega}: \DD^-_{\G}(\mbf E_{\Omega}(D, V^1, V^2)) \to \FF^-_{\Omega, \G}(D, V^1, V^2) 
\]
by  $\Theta_{\Omega}(K)= P_{2!} (K\otimes P_1^*(-))$ for any object $K$ in $  \DD^-_{\G}(\mbf E_{\Omega}(D, V^1, V^2))$ and the functors $P_{2!}$ and $P_1^*$ are defined 
in (\ref{Pi}).

\begin{prop}
\label{theta-convolution}
$\Theta_{\Omega}(K\cdot L) = \Theta_{\Omega}(L) \Theta_{\Omega}(K)$ for any objects $K$ in $\DD^-_{\G}(\mbf E_{\Omega}(D, V^1, V^2))$
and $L$  in $\DD^-_{\G}(\mbf E_{\Omega}(D, V^2, V^3))$.
\end{prop}

\begin{proof}
By definition, we have 
\[
\T(L)   \T(K) (M) 
= P'_{2!}(L\otimes (P'_1)^* \T(K) (M)) = P'_{2!}(L\otimes (P'_1)^*  P_{2!} (K\otimes P_1^*(M)),
\]
where $P'_{2!}$ and $(P'_1)^*$ are corresponding to the maps $p'_2$ and $p_1'$ in  the following cartesian diagram
\[
\begin{CD}
\mbf E_{\Omega}(D, V^1, V^3) @< p_{13}<< \mbf E_{\Omega}(D, V^1, V^2, V^3) @>p'_{12}>> \mbf E_{\Omega}(D, V^1, V^2)\\
@V \tilde p_2VV @Vp_{23}VV @Vp_2VV\\
\mbf E_{\Omega}(D, V^3) @<p'_2<< \mbf E_{\Omega}(D, V^2, V^3) @>p'_1>> \mbf E_{\Omega}(D, V^2).
\end{CD}
\]
By an argument similar to (\ref{associative-2}), we have 
$(P'_1)^*  P_{2!}= P_{23!}  (P'_{12})^*$. So
\[
\T(L)   \T(K) (M)=P'_{2!}(L\otimes P_{23!} (P'_{12})^*(K\otimes P_1^*(M)).
\]
By an argument similar to (\ref{associative-3}), we have 
$P_{23!}( A\otimes P_{23}^*(B) )= P_{23!} (A) \otimes B$. Thus,
\begin{equation}
\label{T-LHS}
\begin{split}
\T(L)   \T(K) (M)
&=P'_{2!} P_{23!} ( P_{23}^* (L) \otimes (P'_{12})^*(K\otimes P_1^*(M)))\\
&=P'_{2!} P_{23!} ( P_{23}^* (L) \otimes (P_{12}')^*(K)\otimes (P_{12}')^* P_1^*(M)).
\end{split}
\end{equation}
Similarly, we have 
\begin{equation}
\label{T-RHS}
\begin{split}
\T(K\cdot L) 
= \tilde P_{2!} P_{13!} ((P_{12}')^*(K) \otimes P_{23}^*(L) \otimes P_{13}^* \tilde P_1^* (M)),
\end{split}
\end{equation}
where $\tilde P^*_1$ comes from the projection $\mbf E_{\Omega}(D, V^1, V^3)\to \mbf E_{\Omega}(D, V^1)$.
The lemma follows by comparing (\ref{T-LHS}) with (\ref{T-RHS}) and  the following identity
\[
P'_{2!} P_{23!} = \tilde P_{2!} P_{13!} \quad \mbox{and}\quad 
(P_{12}')^* P_1^*=P_{13}^* \tilde P_1^*,
\]
which can be proved by a similar way as (\ref{associative-4}) and (\ref{associative-6}). 
\end{proof}

Define a functor of equivalence
\[
\Psi_{\Omega}^{\Omega'}: \FF^-_{\G, \Omega}(D, V^1, V^2) \to \FF^-_{\G, \Omega'}(D, V^1, V^2)
\]
by $\Psi_{\Omega}^{\Omega'} ( F) = \Phi_{\Omega}^{\Omega'} F a^* \Phi_{\Omega'}^{\Omega}$, where
$a$ is the map of multiplication by $-1$ along the fiber of the vector bundle $\mbf E_{\Omega} $ over $\mbf E_{\Omega\cap \Omega'}$ . 
Its inverse is given by $\Psi_{\Omega'}^{\Omega} (-) = a^* \Phi_{\Omega'}^{\Omega} (-) \Phi_{\Omega}^{\Omega'}$,
since $\Phi_{\Omega'}^{\Omega} \Phi_{\Omega}^{\Omega'} = a^*$. Moreover,  we have

\begin{lem}
$\Psi_{\Omega}^{\Omega'}$ commutes with the composition:
$\Psi_{\Omega}^{\Omega'} (F_2\circ F_1) = \Psi_{\Omega}^{\Omega'}(F_2) \circ \Psi_{\Omega}^{\Omega'}(F_1)$ for any
$F_1\in \FF^-_{\G,\Omega}(D, V^1, V^2) $ and $F_2\in \FF^-_{\G,\Omega}(D, V^2, V^3)$.
\end{lem}

Let $^1\DD^-_{\G}(\mbf E_{\Omega}(D, V^1,V^2))$ 
(resp. $^1\DD^-_{\G}(\mbf E_{\Omega}(D, V^s))$, $s=1, 2$) 
be the full subcategory of the category $\DD^-_{\G}(\mbf E_{\Omega}(D, V^1,V^2)) $ (resp. $\DD^-_{\G}(\mbf E_{\Omega}(D, V^i))$)
consisting of all objects such that $a^* (K)\simeq K$.
Let $^1 \FF^-_{\G, \Omega}(D, V^1, V^2)$ denote the category of functors from the category
$^1\DD^-_{\G}(\mbf E_{\Omega}(D, V^1))$ to $^1\DD^-_{\G}(\mbf E_{\Omega}(D, V^2))$. 
We have

\begin{lem}
\label{Theta-Phi-Psi}
The following diagram commutes
\[
\begin{CD}
^1\DD^-_{\G}(\mbf E_{\Omega}(D, V^1,V^2)) @>\Theta_{\Omega}>> ^1 \FF^-_{\G, \Omega}(D, V^1, V^2)\\
@V\Phi_{\Omega}^{\Omega'} VV @V\Psi_{\Omega}^{\Omega'} VV\\
^1\DD^b_{\G}(\mbf E_{\Omega'}(D, V^1,V^2)) @>\Theta_{\Omega'}>> ^1\FF_{\G, \Omega'}(D, V^1, V^2).
\end{CD}
\]
\end{lem}

\begin{proof}
For any $K\in\;  ^1\DD^-_{\G}(\mbf E_{\Omega}(D, V^1,V^2))$ and $K_1\in \; ^1\DD^-_{\G}(\mbf E_{\Omega}(D, V^1))$, we have 
\begin{equation*}
\begin{split}
\Psi_{\Omega}^{\Omega'} \T(K) (K_1) 
= \Phi_{\Omega}^{\Omega'} \T(K) a^* \Phi_{\Omega'}^{\Omega} (K_1) 
= \Phi_{\Omega}^{\Omega'} P_{2!} (K\otimes a^* P_1^*\Pi_{1!} ((\Pi'_1)^*(K_1)\otimes \mathcal L_1))[d_1],
\end{split}
\end{equation*}
where $P_{2!}$ and  $P_1^*$ are from (\ref{Pi}), $\Pi_{1!}$ and $(\Pi_1')^*$ come from the following projections
\[
\begin{CD}
\mbf E_{\Omega}(D, V^1) @<\pi_1<< \mbf E_{\Omega\cup \Omega'} (D, V^1) @>\pi_1'>> \mbf E_{\Omega'}(D, V^1);
\end{CD}
\]
$d_1$ is the rank of $\pi_1$ and $\mathcal L_1$ is defined in (\ref{L}).
Consider the following cartesian diagram
\[
\begin{CD}
\mbf E_{\Omega\cup \Omega'} (D, V^1)\times \mbf E_{\Omega}(D, V^2) @>\tilde \pi_1>> \mbf E_{\Omega}(D, V^1, V^2)\\
@V\tilde p_1VV @Vp_1 VV\\
\mbf E_{\Omega\cup \Omega'}(D, V^1) @>\pi_1>> \mbf E_{\Omega}(D, V^1).
\end{CD}
\]
By an argument similar to (\ref{associative-2}), we have $P_1^* \Pi_{1!} =\tilde \Pi_{1!} \tilde P_1^*$. So
\begin{equation*}
\begin{split}
\Psi&_{\Omega}^{\Omega'} \T(K) (K_1) 
=\Phi_{\Omega}^{\Omega'} P_{2!} (K\otimes a^*  \tilde \Pi_{1!} \tilde P_1^* ((\Pi'_1)^*(K_1)\otimes \mathcal L_1))[d_1]\\
&=\Phi_{\Omega}^{\Omega'} P_{2!} \tilde \Pi_{1!}(\tilde \Pi_1^* (K)\otimes a^*  \tilde P_1^* (\Pi'_1)^*(K_1)\otimes a^* \tilde P^*_1\mathcal L_1))[d_1]\\
&=R_{2!} (R_1^* P_{2!} \tilde \Pi_{1!} (\alpha) \otimes \mathcal L_2)[d_2],
\end{split}
\end{equation*}
where $R_{2!}$ and $R_1^*$ come from the following projections
\[
\begin{CD}
\mbf E_{\Omega}(D, V^2) @<r_1<< \mbf E_{\Omega\cup \Omega'}(D, V^2) @>r_2>> \mbf E_{\Omega'}(D, V^2),
\end{CD}
\]
$d_2$ is the rank of $r_1$,  $\alpha =\tilde \Pi_1^* (K)\otimes a^*  \tilde P_1^* (\pi'_1)^*(K_1)\otimes a^* \tilde P^*_1\mathcal L_1)[d_1]$, and 
$\mathcal L_2$ is defined in (\ref{L}). 
The following cartesian diagram
\[
\begin{CD}
\mbf E_{\Omega\cup \Omega'} (D, V^1, V^2) @>t_1>> \mbf E_{\Omega\cup \Omega'}(D, V^1)\times \mbf E_{\Omega}(D, V^2)\\
@Vs_1VV @Vp_2\tilde \pi_1VV\\
\mbf E_{\Omega\cup \Omega'}(D, V^2) @>r_1>> \mbf E_{\Omega}(D, V^2),
\end{CD}
\]
gives rise to the identity
$R_1^* P_{2!}\tilde \Pi_{1!} =S_{1!}T_1^*$. So we have 
\begin{equation}
\label{Psi-LHS}
\begin{split}
\Psi&_{\Omega}^{\Omega'} \T(K) (K_1) =R_{2!}(S_{1!}T_1^*(\alpha)\otimes \mathcal L_2)[d_2]
=R_{2!} S_{1!} (T_1^* (\alpha) \otimes S_1^* \mathcal L_2)[d_2]\\
&=R_{2!} S_{1!} (T_1^*\tilde \Pi_1^* (K)\otimes T_1^*  \tilde P_1^* (\Pi'_1)^* a^* (K_1)\otimes a^* T_1^*  \tilde P^*_1\mathcal L_1\otimes S_1^* \mathcal L_2)[d_1+d_2].
\end{split}
\end{equation}
On the other hand, we have
\begin{equation}
\label{Psi-RHS}
\begin{split}
\Theta_{\Omega'} \Phi_{\Omega}^{\Omega'}(K) (K_1) 
&=P_{2!}' (\Phi_{\Omega}^{\Omega'} (K)\otimes (P_1')^*(K_1))\\
&=P_{2!}' (M_{12!}' (M_{12}^*(K)\otimes \mathcal L_{12})[r_{12}]\otimes (P_1')^*(K_1))\\
&=P_{2!}' M'_{12!} (M_{12}^*(K) \otimes \mathcal L_{12} \otimes (M_{12}')^* (P_1')^* (K_1) ) [r_{12}],
\end{split}
\end{equation}
where $P_{2!}'$, $(P'_1)^*$ come from the following projections
\[
\begin{CD}
\mbf E_{\Omega'}(D, V^1) @<p_1'<< \mbf E_{\Omega'}(D, V^1, V^2) @>p_2'>> \mbf E_{\Omega'}(D, V^2),
\end{CD}
\]
and $M_{12!}'$, $M_{12}^*$, $\mathcal L_{12}$  and  $r_{12}$  are from \ref{Fourier}.
By comparing (\ref{Psi-LHS}) with (\ref{Psi-RHS}), the lemma follows from the following observations:
$r_2 s_1=p_2' m_{12}'$, $\tilde \pi_1 t_1=m_{12}$, $\pi_1' \tilde p_1 t_1=p_1'm_{12}'$,  $p_1t_1=p_1'$, $s_1=p_2'$ and 
$\mathcal L_{12} =a^* (P_1')^* \mathcal L_1\otimes (P_2')^* \mathcal L_2$.
Note that the last identity can be deduced  from the following  well-known fact. Let $s, p_1, p_2: k\times k\to k$ be the addition, first and second projections, respectively. Then
$s^* \mathcal L_{\chi} = p_1^* \mathcal L_{\chi}\otimes p_2^*\mathcal L_{\chi}$.
\end{proof}

We define the following functors in $\FF^-_{\Omega,\G}(D, V^1, V^2)$:
\begin{align*}
\mathfrak{ I}_{\mu}  &= \Pi_{2!}\Pi_1^*,  &&\mbox{if}\; \dim V^1=\dim V^2=\nu; \\
\mathfrak {F}^{(n)}_{\mu, \mu-n \alpha_i}  &= \Pi_{2!} \Pi_1^* [e_{\mu,n\alpha_i}]  , \; &&\mbox{if}\;  \dim V^1=\nu \;  \mbox{and}\;  \dim V^2=\nu+ ni;\\ 
 \mathfrak {E}^{(n)}_{\mu, \mu +n \alpha_i} &=  \Pi_{1!} \Pi_2^* [f_{\mu,n\alpha_i}] , &&\mbox{if} \; \dim V^1=\nu \;\mbox{and}\; \dim V^2=\nu-ni;
\end{align*}
where the functors $\Pi_{i!}$ and $\Pi_i^*$ are defined in (\ref{Pi}) and $e_{\mu,n\alpha_i}$ and $f_{\mu,n\alpha_i} $ are defined in (\ref{coefficient}).
Note that $\mathfrak I_{\mu} =\mbox{Id}_{\DD^-_{\G}(\mbf E_{\Omega}(D, V^1))}$, the identity functor,  since $\pi_1$ and $\pi_2$ are principal  $\G_{V^1}$-bundles.
We have 

\begin{prop}
\label{Theta-generator}
$\Theta_{\Omega}(\II_{\mu})=\mathfrak{ I}_{\mu}$, 
$\Theta_{\Omega}(\EE^{(n)}_{\mu, \mu-n \alpha_i})  = \mathfrak {F}^{(n)}_{\mu, \mu-n \alpha_i}  $ and
$ \Theta_{\Omega}(\FF^{(n)}_{\mu, \mu +n \alpha_i})=\mathfrak {E}^{(n)}_{\mu, \mu +n \alpha_i} $.
\end{prop}

\begin{proof}
We shall show that $\Theta_{\Omega}(\EE^{(n)}_{\mu, \mu-n \alpha_i})  = \mathfrak {F}^{(n)}_{\mu, \mu-n \alpha_i}  $.
For any $K_1\in \DD^-_{\G}(\mbf E_{\Omega}(D, V^1))$, we have
\begin{equation*}
\begin{split}
\mathfrak F^{(n)}_{\mu,\mu-n\alpha_i} (K_1) &=\Pi_{2!}\Pi_1^*(K_1) [e_{\mu, n\alpha_i}]=
P_{2!} \Pi_{12!} \Pi_{12}^* P_1^* (K_1) [e_{\mu, n\alpha_i}]\\
&=P_{2!} \Pi_{12!} (\bar{\mbb Q}_{l,\mbf Z_{\Omega}}\otimes \Pi_{12}^* P_1^* (K_1) [e_{\mu, n\alpha_i}]
=P_{2!} ( \Pi_{12!} (\bar{\mbb Q}_{l,\mbf Z_{\Omega}})[e_{\mu, n\alpha_i}] \otimes P_1^* (K_1))\\
&=P_{2!} ( Q( \pi_{12!} (\bar{\mbb Q}_{l,\mbf Z_{\Omega}})[e_{\mu, n\alpha_i}] )\otimes P_1^* (K_1))
=\Theta_{\Omega}(\EE^{(n)}_{\mu, \mu-n \alpha_i}) (K_1).
\end{split}
\end{equation*}
The rest can be proved similarly.
\end{proof}

By Lemmas ~\ref{generator-Fourier},  ~\ref{Theta-Phi-Psi}, and Proposition ~\ref{Theta-generator},  we have

\begin{cor}
\label{Psi-generator}
$\Psi_{\Omega}^{\Omega'} (\mathfrak I_{\mu})  =\mathfrak I_{\mu}$, 
$\Psi_{\Omega}^{\Omega'} (\mathfrak {F}^{(n)}_{\mu, \mu-n \alpha_i})=\mathfrak {F}^{(n)}_{\mu, \mu-n \alpha_i}$  and 
$\Psi_{\Omega}^{\Omega'} (\mathfrak {E}^{(n)}_{\mu, \mu+n \alpha_i})=\mathfrak {E}^{(n)}_{\mu, \mu+n \alpha_i}$.
\end{cor}

Actually, we need to show that the complexes in Corollary ~\ref{Psi-generator} are invariant under the functor $a^*$. This can be proved as in ~\cite[10.2.4]{Lusztig93}.

From Proposition ~\ref{theta-convolution}, Corollary ~\ref{Psi-generator} and Proposition ~\ref{U-relations}, we have 

\begin{prop}
\label{Psi-relation}
The functors $\mathfrak I_{\mu}$, $\mathfrak E^{(n)}_{\mu, \mu-n \alpha_i}$ and $\mathfrak F^{(n)}_{\mu, \mu +n \alpha_i}$ 
satisfy the defining relations of $_{\mbb A}\! \dot{\U}$.
\end{prop}

From Corollary ~\ref{Psi-generator}, one sees  that the functors $\mathfrak {F}^{(n)}_{\mu, \mu-n \alpha_i} $ and 
$\mathfrak {E}^{(n)}_{\mu, \mu +n \alpha_i} $ are the functors $\mathfrak F^{(n)}_{\nu, i}$ and $\mathfrak E^{(n)}_{\nu, i}$ in ~\cite{Zheng08}, respectively. 
Proposition ~\ref{Psi-relation} was first proved in ~\cite[2.5.8]{Zheng08}.

Now that the functor $\Theta_{\Omega}$ induces a bifunctor 
\begin{equation}
\label{action}
\circ: \DD^-_{\G}(\mbf E_{\Omega}(D, V^1, V^2) )\times\DD^-_{\G}(\mbf E_{\Omega}(D, V^1) ) \to \DD^-_{\G}(\mbf E_{\Omega}(D, V^2) )
\end{equation}
given by $K\circ K_1= \Theta_{\Omega} (K) (K_1) = P_{2!} (K\otimes P_1^* (K_1))$ 
for any $K\in \DD^-_{\G}(\mbf E_{\Omega}(D, V^1, V^2) )$ and $K_1\in \DD^-_{\G}(\mbf E_{\Omega}(D, V^1) )$.

\begin{rem}
(1). It should be true that the functor $\Theta_{\Omega}$ is fully faithful.

(2). We are not sure if the superscript $1$ in the categories in Lemma ~\ref{Theta-Phi-Psi} can be dropped.

\end{rem}

\section{BLM case}

\label{TypeA}

In this section, we put the following extra  assumptions on the graph $\Gamma$ and the $I$-graded vector space $D$ in Section ~\ref{framed}. 
\begin{itemize}
\item  The graph $\Gamma$ is of type $\mbf A_N$: $1 - 2 - \cdots - N$. 
\item  The space $D$ concentrates on the vertex $N$, i.e., $D_i=0$ for $1\leq i\leq N-1$ and $D_N$ is  a $d$-dimensional vector space over $k$.  
\end{itemize}

\subsection{Relation with ~\cite{BLM90}}
\label{sln}

We fix an orientation $\Omega$ of $\Gamma$ as follows:
$ 1 \to 2 \to \cdots \to N$.
Let 
\begin{equation}
\label{sln-doublequiver}
\Omega^2:  1 \to 2 \to \cdots \to N \to (N+1)  \leftarrow N' \leftarrow \cdots \leftarrow 2' \leftarrow 1'.
\end{equation}
To a pair  $( V, V')$ of $I$-graded vector spaces, the space $\mbf E_{\Omega}(D, V, V')$ defined in  section ~\ref{framed} 
is  the  representation variety of $\Omega^2$ with $V_a$ attached to the vertex $a$ and $V_a'$ to the vertex $a'$ and $D$ to the vertex $N+1$.

Let us fix some notations. We will use $x_{a+1, a}$ to denote elements in $\Hom(V_a, V_{a+1})$.  In particular, the element $q_N\in \Hom (V_N, D_N)$ is denoted by
$x_{N+1, N}$ in this section.
For a pair $(i, j)$ such that $i\leq j$, $1\leq  i \leq N$ and   $1\leq j \leq N+1$, we fix a representative  $S_{i, j}$  for the indecomposable representation of $\Omega^2$ of dimension equal to $1$ 
at the vertices $i, (i+1),\cdots, j$,  and equal to $0$ otherwise. The notation $S_{i', j'}$ is defined similarly.
For $1\leq i, j \leq N+1$, let $T_{i, j'}$ denote  the indecomposable representation of the quiver $\Omega^2$ such that the dimension  of $T_{i, j'}$ 
equals $1$ at the vertices $i, i+1, \cdots, N, (N+1), N', (N-1)', \cdots j'$, and zero otherwise. When $i=j$, we simply write $S_i$ for $S_{i, i}$, and  $T_i$ for $T_{i, i'}$.

Let      
\[
U=\{ (X, X') \in \mbf E_{\Omega}(D, V, V') | x_{a+1, a},  x'_{a+1,a},  \; \mbox{are injective,} \; \forall  a=1, \cdots, N\}.
\]
It is clear that $U$ is nonempty only when $\dim V_1 \leq \dim V_2 \leq \cdots \dim V_N \leq \dim D_N=d$ and the same property for $V'$.   The set of  
isomorphism classes, $[V]$, of $I$-graded vector spaces $V$  of such a property is then in bijection with  the set $\mbf S_d$ 
of all  nondecreasing $N+1$ step sequences, $\underline \nu=(0\leq \nu_1 \leq \cdots \leq \nu_N \leq d)$,  of nonnegative  integers., via the map 
\[
[V]\mapsto |V|=(\dim V_1, \dim V_2, \cdots, \dim V_N).
\]  
By abuse of notations, we write $V\in \mbf S_d$ if $\dim V_i\leq \dim V_j$ for $i\leq j$. To any  $\underline \nu \in \mbf S_d$,  we attach the partial flag variety
$\F_{\underline \nu}$ consisting of all flags $F=(0\equiv F_0\subseteq F_1\subseteq \cdots \subseteq F_N \subseteq F_{N+1}\equiv D)$
such that $\dim F_a=\nu_a$ for $a=1,\cdots, N$. 

For any pair $V, V'\in \mbf S_d$ such that $|V|=\underline \nu$ and $|V'|=\underline \nu'$, define a morphism of varieties
\[
u: U\to \F_{\underline{\nu}}\times \F_{ \underline{\nu'}}; \quad (X, X') \mapsto (F, F'), 
\]
where
$F=(0\subseteq \mbox{ im}  (x_{N+1,N}  x_{N, N-1} \cdots x_{2,1} ) \subseteq \cdots \subseteq \mbox{im} (x_{N+1,N}) \subseteq  D_N)$,
and $F'$ is defined similarly. 
It is well-known (\cite{Nakajima94}) that the algebraic group $\G_1=G_V\times G_{V'}$ acts freely on $U$, 
and  $u$ can be identified with  the quotient map $q: U\to \G_1\backslash U$.

The following diagram of morphisms
\[
\begin{CD}
\mbf E_{\Omega} (D, V, V') @<\beta << U @>u>>\F_{\underline{\nu}} \times \F_{\underline{\nu'}}
\end{CD}
\]
induces  a diagram of morphisms of algebraic stacks
\[
\begin{CD}
[\G\backslash \mbf E_{\Omega} (D, V, V) ]  @<\beta<<  [\G\backslash U] @>Qu >>  [G_D \backslash \F_{\underline \nu} \times \F_{\underline \nu'}], 
\end{CD}
\]
which, in turn, gives rise to the following diagram of functors
\[
\begin{CD}
\D^b_{\G} (\mbf E_{\Omega} (D, V, V') ) @>\beta^*>> \D^b_{\G}( U) @<Qu^*<<\D^b_{G_D}( \F_{\underline{\nu}} \times \F_{\underline{\nu'}}).
\end{CD}
\]

\begin{lem}
\label{thick}
Suppose that $K$ is a $\G$-equivariant complex on $\mbf E_{\Omega}(D, V, V')$. Then 
$K\in \mathcal N$ in Section ~\ref{general-localization}  if and only if  $K$ satisfies  that $\mrm{supp}(K) \cap U=\mbox{\O}$.
\end{lem}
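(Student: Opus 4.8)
The plan is to unwind the definition of the thick subcategory $\mathcal N$ through the Fourier--Deligne transform and the elementary localization pieces $\mathcal N_i$, and to reduce everything to the single-vertex statement. Recall that $\mathcal N$ is generated by $\mathcal N_i$, $i \in I$, where $\mathcal N_i$ consists of those $K$ with $\mathrm{supp}(\Phi_\Omega^{\Omega_i}(K)) \subseteq F_i$, and that $F_i \subseteq \mbf E_{\Omega_i}$ is cut out by the non-injectivity of $X(i)$ or $X'(i)$, with complement $U_i = \beta_i(U_i)$. Since the Fourier transform $\Phi_\Omega^{\Omega_i}$ is an equivalence and only rearranges the arrows incident to $i$ while leaving the underlying base $\mbf E_{\Omega \cap \Omega_i}$ fixed, the condition ``$\mathrm{supp}(\Phi_\Omega^{\Omega_i}(K)) \subseteq F_i$'' should translate, in the $\Omega$-picture, into ``$\mathrm{supp}(K)$ is disjoint from the locus $U_i^\Omega$ where the map built out of the $\Omega$-arrows at $i$ together with $q_i$ is injective for both $X$ and $X'$.'' The first step is therefore to make this translation precise: identify $\Phi_\Omega^{\Omega_i}(\text{supp-condition})$ with a support condition back on $\mbf E_\Omega$, using that $\Phi$ is, fiberwise over $\mbf E_{\Omega \cap \Omega_i}$, a genuine Fourier transform which preserves supports up to replacing a subspace by its annihilator.

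Next, I would record the key geometric fact specific to type $\mbf A$ with the chosen orientation $\Omega\colon 1 \to 2 \to \cdots \to N$: the open set $U$ of tuples with all $x_{a+1,a}$ and $x'_{a+1,a}$ injective is precisely the intersection $\bigcap_{i \in I} U_i^\Omega$, where $U_i^\Omega$ is the open locus on which $X(i)$ and $X'(i)$ are injective in the orientation $\Omega_i$ transported back. Indeed, in type $\mbf A$ each vertex $i \neq N$ has a single outgoing arrow $i \to i+1$ (so injectivity of $X(i)$ is injectivity of $x_{i+1,i}$), while at $i = N$ the map $X(N)$ involves $q_N = x_{N+1,N}$; thus $\bigcap_i U_i^\Omega$ is exactly the condition defining $U$. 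Consequently an object $K$ whose support meets $U$ cannot lie in any $\mathcal N_i$, and conversely if $\mathrm{supp}(K) \cap U = \emptyset$ then $\mathrm{supp}(K)$ is contained in the union $\bigcup_i (\mbf E_\Omega \setminus U_i^\Omega)$, i.e. in the complement of the intersection.

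The final step is the implication ``$\mathrm{supp}(K) \cap U = \emptyset \Rightarrow K \in \mathcal N$.'' For this I would filter $\mathrm{supp}(K)$ by the closed $\G$-invariant subsets $Z_i := \mbf E_\Omega \setminus U_i^\Omega$ and use a standard dévissage: writing $\mathrm{supp}(K) \subseteq Z_1 \cup \cdots \cup Z_N$, successive triangles (restriction to $Z_1$, then to $Z_2 \setminus Z_1$, etc.) express $K$ as an iterated extension of objects each supported in a single $Z_i$; each such object lies in $\mathcal N_i \subseteq \mathcal N$ by the support translation of the first step, and $\mathcal N$ is thick, hence closed under extensions and shifts, so $K \in \mathcal N$. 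The reverse implication ``$K \in \mathcal N \Rightarrow \mathrm{supp}(K) \cap U = \emptyset$'' follows because each generator of $\mathcal N_i$ has support in $Z_i$ (again via the Fourier translation), support of a cone is contained in the union of supports, and $U \subseteq U_i^\Omega$ for every $i$.

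The main obstacle I anticipate is the first step: pinning down exactly how the Fourier--Deligne transform $\Phi_\Omega^{\Omega_i}$ interacts with supports, since in general a Fourier transform does \emph{not} preserve supports — one must use that the relevant locus $F_i$ is defined by a condition (non-injectivity of $X(i)$) that is ``constant along the fibers'' of the vector bundle $\mbf E_{\Omega_i} \to \mbf E_{\Omega_i \cap \Omega}$ being dualized, so that pullback under $\pi$ and pushforward under $\pi'$ in the definition of $\Phi$ genuinely carry the support condition in $F_i$ to the analogous support condition on $\mbf E_\Omega$. Verifying this compatibility carefully — perhaps by checking it on stalks along the fibers, or by invoking that $\Phi$ restricted to complexes monodromic with respect to the fiberwise scaling action behaves like an involution exchanging $0$-section and whole space — is where the real work lies; the dévissage and thickness arguments afterward are routine.
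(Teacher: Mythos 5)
Your easy direction (objects of $\mathcal N$ have support disjoint from $U$) is fine and is essentially the paper's argument: the injectivity condition on $x_{i+1,i},x'_{i+1,i}$ is pulled back from the base $\mbf E_{\Omega\cap\Omega_i}$ of the bundle being dualized, so the Fourier--Deligne transform preserves disjointness of supports from that locus, and $U$ is the intersection of these loci. The genuine gap is in the hard direction, at exactly the step you flag: the ``translation'' you need, namely that $\mrm{supp}(\Phi_{\Omega}^{\Omega_i}K)\subseteq F_i$ is \emph{equivalent} to $\mrm{supp}(K)\cap U_i^{\Omega}=\emptyset$, is false in the direction your d\'evissage uses. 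The locus $F_i$ is cut out by non-injectivity of $X(i)$ and $X'(i)$, and $X(i)$ involves the component $x_{i-1,i}$, which is precisely the fiber coordinate of $\mbf E_{\Omega_i}\to\mbf E_{\Omega\cap\Omega_i}$; so $F_i$ is \emph{not} constant along the fibers being dualized, and your anticipated fix cannot apply. Concretely, take $N=2$, $\dim V_1=\dim V_2=1$, $V'=0$, and let $K$ be the skyscraper at the origin of $\mbf E_{\Omega}=\Hom(V_1,V_2)\oplus\Hom(V_2,D)$. Then $\mrm{supp}(K)\subseteq Z_2$, but $\Phi_{\Omega}^{\Omega_2}(K)$ is (up to shift) the constant sheaf on $\{q_2=0\}$, constant along the dual coordinate $x_{1,2}$; at points with $x_{1,2}\neq 0$ the map $X(2)=(q_2,x_{1,2})$ is injective, so $\mrm{supp}(\Phi_{\Omega}^{\Omega_2}K)\not\subseteq F_2$. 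Since $\Phi_{\Omega}^{\Omega_2}$ is an equivalence and ``support in $F_2$'' is already thick, this $K$ does \emph{not} lie in $\mathcal N_2$ despite being supported in $Z_2$ (it lies in $\mathcal N_1$, because the minimal vertex where injectivity fails is $1$). Hence ``supported in $Z_i$'' does not imply ``in $\mathcal N_i$'', and your d\'evissage only writes $K$ as an iterated extension of pieces about which nothing is known.

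What the paper actually does for the hard direction is different in kind and cannot be replaced by a fiberwise support argument. It reduces to $\G$-equivariant simple perverse sheaves, which are the $\IC_{\G}(\overline{\mathcal O_{(X,X')}})$ of orbit closures, and picks the \emph{minimal} vertex $i$ at which injectivity of $X(j)$, $X'(j')$ fails; the claim is that this $\IC$ lies in $\mathcal N_i$ for that particular $i$. This is proved by realizing the $\IC$ as a direct summand of $\pi_{\mbf{i,a}!}(\bar{\mbb Q}_l)$ for a Reineke resolution of the orbit closure, invoking Lusztig's compatibility of the Fourier--Deligne transform with these pushforwards (\cite[10.2]{Lusztig93}) to identify $\Phi_{\Omega}^{\Omega_i}$ of it with the analogous pushforward in the orientation $\Omega_i$, and then an Auslander--Reiten quiver argument (the minimality of $i$ forces the last entry of the sequence $\mbf i$ to be $i$ or $i'$) showing that the image of $\pi^{\Omega_i}_{\mbf{i,a}}$ is contained in $F_i$. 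Some argument of this sort, which tracks where the support of the Fourier transform actually lands vertex by vertex according to the minimal failing vertex, is the real content of the lemma and is missing from your proposal.
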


\begin{proof}
To each $a$, we fix an orientation of the graph $\Gamma$:
\[
\Omega_a: 1 \to 2 \to \cdots \to (a-1) \leftarrow a \to (a+1) \to \cdots \to N.
\]
Let $W_a^{\Omega}$ (resp. $W_a^{\Omega\cup \Omega_a}$)  be the open subvariety 
of $\mbf E_{\Omega}$ (resp. $\mbf E_{\Omega \cup \Omega_a}$) consisting of all elements such that the component $x_{a+1, a}$ is injective.
We have the following diagram
\[
\begin{CD}
W_a^{\Omega} @<\tau<< W_a^{\Omega\cup \Omega_a} @>\tau'>> W_a^{\Omega_a}\\ 
@V\beta VV @V\beta VV @V\beta VV\\
\mbf E_{\Omega} @<\pi<<  \mbf E_{\Omega \cup \Omega_a} @>\pi'>> \mbf E_{\Omega_a},
\end{CD}
\]
where the $\beta$'s are the open inclusions, and $\tau$ (resp. $\tau'$) is the restriction of $\pi$ (resp. $\pi'$)  to the variety $W_a^{\Omega\cup \Omega_a}$.
Moreover, the squares in the above diagram are cartesian. From this diagram, we have
\[
\beta^* \Phi_{\Omega}^{\Omega_a} (K) = \tau'_! ( \tau^* \beta^* K\otimes \beta^* \mathcal L) [r].
\]
This implies that  
\begin{equation}
\label{sln-equivalent}
\mbox{supp}(\Phi_{\Omega}^{\Omega_a}(K) ) \cap W_a^{\Omega_a}=\mbox{\O} \quad
\mbox{if and only if} \quad
\mbox{supp} (K)\cap W_a^{\Omega}=\mbox{\O}.
\end{equation}
Assume that $K\in \mathcal N_{a}$, then by (\ref{sln-equivalent}), $\mbox{supp}(K)\cap W_a^{\Omega}=\mbox{\O}$. 
Since $W_a^{\Omega} \supseteq U$, we have
$\mbox{supp}(K)\cap U=\mbox{\O}$.  Since $\mathcal N$ is generated by the $\mathcal N_a$'s, we see that any object  $K\in \mathcal N$ has the property that
$\mbox{supp}(K) \cap U=\mbox{\O}$.

Now we shall  show that if $\mbox{supp}(K)\cap U=\mbox{\O}$, then $K\in \mathcal N$. 
Since any object in $\D^b_{\G}(\mbf E_{\Omega})$ can be generated by 
the $\G$-equivariant simple perverse sheaves, 
it is enough to show this statement for $K$  a simple perverse sheaf, which we shall assume from now on. 

Recall that $\mbf E_{\Omega}(D, V, V')$ can be regarded as a representation space of the quiver $\Omega^2$ in (\ref{sln-doublequiver}). 
By Gabriel's theorem,  there is only finitely many $\G$-orbits in $\mbf E_{\Omega}(D, V, V')$  and, moreover,   the stabilizers of the orbits 
in $\mbf E_{\Omega}(D, V, V')$ are connected.
So the $\G$-equivariant  simple perverse sheaves on $\mbf E_{\Omega} (D, V, V')$  are  the intersection cohomology complexes
$\IC_{\G} (\overline{\mathcal O_{(X,X')}})$,
attached to the $\G$-orbit  $\mathcal O_{(X, X')}$ in $\mbf E_{\Omega}(D, V, V')$ containing the   element  $(X, X')$.
Therefore, $K\in \mathcal N$ if the following  claim holds: 

{\bf Claim.} If $X(j) + X'(j') $ is injective for $1\leq j\leq i-1$, and $X(i)+X'(i')$ is not injective for some $i \in [1, N]$, 
then $\IC_{\G} (\overline{\mathcal O_{(X,X')}}) \in \mathcal N_i$.

The claim can be shown as follows. 
Let 
$\mbf i=(i_1,\cdots, i_n)$
be  a sequence of vertices in $\Omega^2$ and $\mbf a=(a_1,\cdots, a_n)$ be a sequence of positive integers such that 
$\sum_{i_l=i} a_l =\dim V_i$ for any vertex $i$.
Let $\tF_{\mbf{i, a}}^{\Omega} $ be the variety of all triples $(X, X', \mbf F) $, where $(X, X')\in \mbf E_{\Omega}(D, V, V')$ and 
$\mbf F=(\mbf F^0=D\oplus V\oplus V' \supset \mbf F^1 \supset\cdots \supset \mbf F^n=0)$ is a flag of graded vector subspaces in $D\oplus V\oplus V'$,  such that
\[
\dim \mbf F^{l-1}/\mbf F^l = a_l i_l
\quad
\mbox{and}
\quad
(X, X') (\mbf F^l) \subset \mbf F^l,
\quad \quad \forall l=1, \cdots, n.
\] 
Consider the projection  to the $(1, 2)$-components:
\begin{equation}
\label{resolution}
\pi_{\mbf{i, a}}^{ \Omega} : \tF_{\mbf{i, a}}^{\Omega} \to \mbf E_{\Omega}(D, V, V'), \quad (X, X', \mbf F) \mapsto (X, X').
\end{equation}
By ~\cite[Theorem 2.2]{R03}, one can choose a particular pair $(\mbf i, \mbf a)$ 
such that the image of $\pi_{\mbf{i, a}}^{ \Omega}$ is the closure $\overline{\mathcal O_{(X, X')}}$ of the orbit $\mathcal O_{(X, X')}$, and the restriction
$(\pi_{\mbf{i, a}}^{ \Omega})^{-1} (\overline{\mathcal O_{(X, X')}}) \to \overline{\mathcal O_{(X, X')}}$ is an isomorphism. 
Thus, the complex $\IC_{\G} (\overline{\mathcal O_{(X,X')}})$ is a direct summand of the semisimple complex
$(\pi_{\mbf{i, a}}^{ \Omega})_! (\bar{\mbb Q}_{l, \tF_{\mbf{i, a}}^{\Omega}})$, up to a shift.

So the complex $\Phi_{\Omega}^{\Omega_i} (\IC_{\G} (\overline{\mathcal O_{(X,X')}})) $ is a direct summand of the complex
$\Phi_{\Omega}^{\Omega_i} ((\pi_{\mbf{i, a}}^{ \Omega})_! (\bar{\mbb Q}_{l, \tF_{\mbf{i, a}}^{\Omega}}))$, up to a shift.
By ~\cite[10.2]{Lusztig93}, the complex 
$\Phi_{\Omega}^{\Omega_i} ((\pi_{\mbf{i, a}}^{ \Omega})_! (\bar{\mbb Q}_{l, \tF_{\mbf{i, a}}^{\Omega}}))$ is isomorphic to the complex
$(\pi_{\mbf{i, a}}^{ \Omega_i})_! (\bar{\mbb Q}_{l, \tF_{\mbf{i, a}}^{\Omega_i}})$, up to a shift,
where $\pi_{\mbf{i, a}}^{\Omega_i}$ and $\tF_{\mbf{i,a}}^{\Omega_i}$ are defined in exactly the same manner as 
$\pi_{\mbf{i,a}}^{\Omega}$ and $\tF_{\mbf{i,a}}^{\Omega}$ with $\mbf E_{\Omega}(D, V, V')$ replaced by $\mbf E_{\Omega_i}(D, V, V')$.
So the support of the complex $\Phi_{\Omega}^{\Omega_i} (\IC_{\G} (\overline{\mathcal O_{(X,X')}})) $ is contained in the image of the morphism
$\pi_{\mbf{i, a}}^{\Omega_i}$.

Observe that the conditions in the claim imply that 
\begin{itemize}
\item either $S_{1, i}$ or $S_{1', i'}$  is a direct summand of the representation $M$ of $\Omega^2$ corresponding to the element $(X, X')$ in the claim; 
\item  the representation $M$ does not contain any direct summand of  the form $S_{1, t}$ and $S_{1', t'}$ for $1\leq t < i$. 
\end{itemize}
From this observation, we see that 
the chosen  sequence $\mbf i=(i_1, \cdots, i_n)$ satisfies that $i_n=i$ or $i'$, from the construction in ~\cite{R03} and the Auslander-Reiten quiver of $\Omega^2$. 
This implies that 
for any $(X, X', \mbf F)\in \tF_{\mbf{i,a}}^{\Omega_i}$, either  $\ker (X(i)) \neq 0$ if $i_n=i$ or $\ker (X'(i'))\neq 0$ if $i_n=i'$. 
Therefore, the image of $\pi_{\mbf{i, a}}^{\Omega_i}$ is contained in the subvariety of $\mbf E_{\Omega_i}(D, V, V')$ 
 consisting of all elements  $(X, X')$ such that $\ker (X(i))\neq 0$ or $\ker (X'(i'))\neq 0$.  The claim follows.
\end{proof}

By Lemma ~\ref{thick}, there is an equivalence of triangulated categories 
\[
\bar Q: \D^b_{\G}(U) \to \mathscr D^b_{\G} (\mbf E_{\Omega}(D, V, V'))\equiv \D^b_{\G}(\mbf E_{\Omega}(D, V, V'))/\mathcal N
\]
such that $Q=\bar Q \beta^*$ and $\beta_! =Q_! \bar Q$. 
For $1\leq i\leq N$, $n\in \mbb N$,  we set
\begin{eqnarray*}
 \Delta_{\underline \nu}  &=& \{ ( F, F') \in \F_{\underline{\nu}} \times \F_{\underline{\nu}}| F=F'\},\\
  Y_{\underline \nu, - n\alpha_i}& =&\{(F, F') \in \F_{\underline{\nu}} \times \F_{\underline{\nu'}} | F_j \subseteq F'_j, \dim F_j'/F_j=n \delta_{ij}, \forall 1\leq j\leq N\},\\
 Y_{\underline \nu, +n\alpha_i} &=&\{ (F, F') \in \F_{\underline{\nu} } \times \F_{\underline{\nu'}} | F_j \supseteq F'_j, \dim F_j/F'_j=n \delta_{ij},\forall 1\leq j\leq N\},
\end{eqnarray*}
if $\underline{\nu'}$ exists.
Let 
\begin{eqnarray}
\label{Generator-1}
\begin{split}
1_{\underline \nu} = \Q_{l,  \Delta_{\underline\nu}}, \;
E^{(n)}_{\underline \nu, -n\alpha_i} 
= \Q_{l,Y_{\underline \nu, -n\alpha_i}} [e_{\underline\nu, -n\alpha_i}] , \;
F^{(n)}_{\underline \nu, + n \alpha_i} &=  \Q_{l,  Y_{\underline \nu, +n\alpha_i}} [f_{\underline \nu, + n \alpha_i}],
\end{split}
\end{eqnarray}
where  $e_{\underline\nu, -n\alpha_i}=n( \nu_{i+1} -(\nu_i+n))$, $f_{\underline \nu, + n \alpha_i}=n( (\nu_i-n)-     \nu_{i-1} ) $,
$\nu_{N+1}=d$ and $\nu_0=0$.
By combining the above analysis, we have the following proposition.

\begin{prop}
\label{equivalent}
We have a sequence of functors of equivalence
\begin{equation}
\label{sln-functors}
\D^b_{G_D}( \F_{\underline{\nu}} \times \F_{\underline{\nu'}}) \overset{Qu^*}{\to} \D^b_{\G}( U)  \overset{\bar Q}{\to} \mathscr D^b_{\G}(\mbf E_{\Omega}(D, V, V')).
\end{equation}
Moreover, 
for any  $\mu =\lambda -\nu$, 
\begin{equation}
\label{sln-generator}
\II_{\mu} =\bar Q Qu^* ( 1_{\underline \nu}), \quad 
\EE^{(n)}_{\mu, \mu-n\alpha_i}= \bar Q Qu^* \left (E^{(n)}_{\underline \nu, -n\alpha_i}\right ) , \quad \mbox{and} \quad 
\FF^{(n)}_{\mu, \mu+n\alpha_i}=\bar Q Qu^* \left  (F^{(n)}_{\underline \nu, +n\alpha_i} \right ).
\end{equation}
where $\II_{\mu}$, $\EE_{\bullet}$ and $\FF_{\bullet}$ are from (\ref{Generator-I}) and  $1_{\underline \nu}$, $E_{\bullet}$ and $F_{\bullet}$'s are from (\ref{Generator-1}).
\end{prop}

Define a convolution product ``$\cdot$'' on the categories $\D^b_{\G}(U_i)$ as follows.
We have the following commutative diagram
\[
\begin{CD}
U @>u_{ij}>> U\\
@V\beta VV @V\beta VV\\
\mbf E_{\Omega}(D, V, V', V'') @>p_{ij}>> \mbf E_{\Omega}(D, V^i, V^j),
\end{CD}
\]
where $p_{ij}$ is the projection to the $(i, j)$-component and $u_{ij}$ is the restriction of $p_{ij}$ to $U$. 
The morphism $u_{ij}$  then defines a morphism $ Q u_{ij} : [ \mbf H \backslash U ] \to  [ \G \backslash U] $.
To any objects $K$  and $L$ in $\D^b_{\G}(U)$, associated an object
\[
K\cdot L = Qu_{13!} (Qu_{12}^* (K) \otimes Qu_{23}^* (L)) \quad  \in \D^b_{\G}(U).
\]
Similarly, we define a convolution product 
\begin{equation}
\label{sln-convolution}
\circ: \D^b_{G_D} (\F_{\underline \nu}\times \F_{\underline \nu'}) \times \D^b_{G_D}(\F_{\underline \nu'}\times \F_{\underline \nu''}) \to \D^b_{G_D}(\F_{\underline \nu}\times \F_{\underline \nu''})
\end{equation}

The following proposition shows that the construction given in ~\cite{BLM90} is compatible with the one given in this paper.

\begin{prop}
\label{sln-convolution-compatible}
The convolution products on $\D^b_{G_D}(\F \times \F)$, $\D^b_{\G}(U)$  and 
$\mathscr D^b_{\G}(\mbf E_{\Omega}(D, V, V'))$ are compatible with the functors
$Qu^*$ and $\bar Q$ in the diagram (\ref{sln-functors}).
\end{prop}

\begin{proof}
First, we show that the convolution products are compatible with the functors $\bar Q$.
Recall that $Q=\bar Q \circ \beta^*$ and  $\beta_!=Q_! \bar Q$. We have
\begin{equation}
\label{compatibility-A}
\bar Q \circ Qu_{ij}^* =\bar Q \circ Qu_{ij}^*  \beta^* \beta_!
=\bar Q  \beta^*  \circ Qp_{ij}^* \beta_! = Q\circ Qp_{ij}^* \circ Q_! \bar Q=P_{ij}^* \bar Q.
\end{equation}
Similarly, 
\begin{equation}
\label{compatibility-B}
\bar Q  \circ Qu_{ij!} = \bar Q \beta^* \beta_! Q u_{ij!}=Q\circ Qp_{ij!} \beta_! = Q\circ Qp_{ij!} Q_! \bar Q= P_{ij!} \bar Q.
\end{equation}
From (\ref{compatibility-A}) and (\ref{compatibility-B}), 
we see that the convolution products on $\D^b_{\G}(U_i)$ and $\mathscr D^b_{\G}(\mbf E_{\Omega}(D, V, V'))$ are compatible with the functor $\bar Q$.

Now, we show that the convolution products are compatible with the functors $Qu^*$.  
We have the following cartesian diagram:
\[
\begin{CD}
[\mbf H \backslash U] @>Qu_{ij} >> [\G \backslash U]\\
@VQuVV @VQuVV\\
[G_D \backslash (\F_{\underline \nu}\times \F_{\underline \nu'} \times \F_{\underline \nu''} )] @>q_{ij}>> [G_D \backslash (\F_{\underline \nu^i} \times \F_{\underline \nu^j})].
\end{CD}
\]
where $q_{ij}$ is a projection and the first $U$ is contained in $\mbf E_{\Omega}(D, V, V', V'')$ such that $|V|=\underline \nu$, $|V'|=\underline \nu'$ and $|V''|=\underline \nu''$.
This cartesian diagram gives rise to the following identities:
\[
Qu^* q_{ij}^* =Qu_{ij}^* Q u^*, \quad \mbox{and} \quad Qu^* q_{ij!}= Qu_{ij!} Qu^*. 
\]
So for any 
$ K\in \D^b_{G_D}(\F_{\underline \nu}  \times \F_{\underline  \nu'})$ and $L\in \D^b_{G_D}(\F_{\underline \nu'} \times \F_{\underline  \nu''})$,
we have 
\begin{equation*}
\begin{split}
Qu^* (K\circ L) 
&= Qu^* q_{13!} (q_{12}^*(K) \otimes q_{13}^* (L) ) 
=Qu_{13!}  Qu^*(q_{12}^* (K) \otimes q_{13}^* (L))\\
&=Qu_{13!} (Qu_{12}^* Qu^*(K) \otimes Qu_{23}^* Qu^*(L)) =Qu^*(K)\cdot Qu^*(L).
\end{split}
\end{equation*} 
Therefore, the convolution products commute with the functor $Qu^*$. The  lemma follows.
\end{proof}

The following theorem follows from   (\ref{sln-generator}) in Proposition ~\ref{equivalent}, Proposition~\ref{sln-convolution-compatible}, and
the results in ~\cite{BLM90} and ~\cite{SV00}.

\begin{thm} 
\label{omega}
 Under the assumption in this section, the conjectures ~\ref{I-semisimple} and ~\ref{conjecture-algebra} hold. 
\end{thm}

\subsection{Singular support}

Let $\overline \Omega$ be the quiver obtained from $\Omega$ by reversing all arrows in $\Omega$. 
Let $\mbf E'_{\Omega} (D, V) =\oplus_{h\in \overline\Omega} \Hom (V_{h'}, V_{h''}) \oplus \oplus_{i\in I} \Hom (D_i, V_i)$ and 
$\mbf E'_{\overline \Omega}(D, V, V') = \mbf E'_{\overline \Omega}(D, V)\times \mbf E'_{\overline \Omega}(D, V')$.
We shall identify  the space 
\[
\mbf E:= \mbf E_{\Omega}(D, V, V')\oplus \mbf E'_{\overline \Omega}(D, V, V')
\]
with the cotangent bundles of $\mbf E_{\Omega}(D, V, V')$.
(See ~\cite[12]{Lusztig91} for more details.) 
Let $\Lambda$ be the closed subvariety of $\mbf E$ defined by the following (ADHM or GP) relations:
\begin{equation*}
\label{GP-1}
x_{a, a+1} x_{a+1, a} = x_{a, a-1} x_{a-1, a}, \quad 
x'_{a', (a+1)'} x'_{(a+1)', a'} = x'_{a', (a-1)'} x'_{(a-1)', a'}, \quad \forall 1\leq a\leq N;
\end{equation*}
\begin{equation*}
\label{GP-2}
x_{N+1, N} x_{N, N+1} =x'_{N+1, N'} x'_{N', N+1}.
\end{equation*}
This is  Lusztig's nilpotent quiver variety in ~\cite[12]{Lusztig91}. Let $\mbox{Nil}$ be the variety of nilpotent elements in $\mbox{End} (D)$. Let $G_D$ acts on $\mbox{Nil}$ by conjugation. 
Define a morphism 
\[
\pi: \Lambda \to \mbox{Nil}
\]
of varieties  by sending elements, say $X$,  in $\Lambda$ to $x_{N+1, N} x_{N, N+1}$. Note that 
the morphism $\pi$ is $\G_1$-equivariant.

Let $\Lambda^s$ be the open subvariety of $\Lambda$ consisting of all elements such that 
$x_{a+1,a}+ x_{a-1,a}$ and $x_{(a+1)', a'}+ x_{(a-1)', a'}$ are injective for $1\leq a\leq N$.
By ~\cite{Nakajima94}, the $\G_1$-action on $\Lambda^s$ is free and, moreover, admits 
a GIT  quotient $\G_1\backslash \Lambda^s$, isomorphic to the generalized Steinberg variety 
\[
\mathcal Z: = \{ (x, F, F') \in \mbox{Nil}\times \F_{\underline \nu}\times \F_{\underline \nu'}| x(F_a)\subseteq F_{a-1},\; x(F'_a)\subseteq F'_{a-1}, \; 1\leq a\leq N+1\}.
\]
We shall identify $\G_1\backslash \Lambda^s$ with $\mathcal Z$.
Due to the fact that $\pi$ is $\G_1$-equivariant, it factors through $\mathcal Z$, i.e., $\pi$ is the composition of the morphisms
 $\Lambda^s \overset{q}{\to} \mathcal Z \overset{\pi'}{\to} \mbox{Nil}$.

Given a simple perverse sheaf $K$ in $\D^b_G(X)$, 
the singular support, SS$(K)$,  of $K$ is defined to be the singular support of the complex $K_X$ in ~\cite{KS90}.
Here we abuse the notion slightly,  we mean  the singular support of its counterpart on $X(\mbb C)$ via the principals in ~\cite[Ch. 6]{BBD82}.

\begin{prop}
\label{uniform-Q}
 $K\in  \mathcal N$ if and only if $\mbox{SS}(K)\cap \Lambda^s =\mbox{\O}$. 
\end{prop}

\begin{proof} 
Suppose that $\mbox{SS}(K) \cap \Lambda^s=\mbox{\O}$. 
From ~\cite{KS90},   $\mbox{SS}(K)\cap \mbf E_{\Omega}(D, V, V')\simeq \mbox{supp} (K)$ where $\mbf E_{\Omega}(D, V,V')$ is identified with the subspace 
$0\oplus \mbf E_{\Omega}(D, V,V')$ of $\mbf E$. So we have
\[
\mbox{\O}
=\mbox{SS}(K) \cap \Lambda^s\supseteq \mbox{SS}(K) \cap \Lambda^s\cap \mbf E_{\Omega}(D, V, V') 
=\mbox{Supp}(K) \cap U
\]
where $U$ is the open subvariety of $\mbf E_{\Omega}(D, V, V')$ defined in the section ~\ref{sln}. 
By Lemma ~\ref{thick}, we see that $K\in  \mathcal N$.

On the other hand, if $\mbox{SS}(K) \cap \Lambda^s\neq \mbox{\O}$, then it is a non empty closed subvariety of $\Lambda^s$.  
Hence, $\pi(\mbox{SS}(K) \cap \Lambda^s)$ is a closed subvariety of $\mbox{Nil}$ due
to the fact that $\pi$ can be decomposed as $\pi' q$ with $\pi'$ a proper map and $q$ a quotient map. Note that $\pi(\mbox{SS}(K) \cap \Lambda^s)$ is also 
$G_D$-invariant since $K$ is a $\G$-equivariant complex.
So $0\in \pi(\mbox{SS}(K) \cap \Lambda^s)$, which implies that $\mbox{SS}(K) \cap \Lambda^s\cap \pi^{-1}(0)\neq \mbox{\O}$. Observe that 
$\Lambda^s\cap \pi^{-1}(0)=U$. Thus, $\mbox{Supp}(K) \cap U\neq \mbox{\O}$.  By Lemma ~\ref{thick},  $K\not \in  \mathcal N$.
The proposition follows.
\end{proof}

\end{document}